\newtheorem{theorem}{Theorem}[section]
\newtheorem{lemma}[theorem]{Lemma}
\newtheorem{definition}[theorem]{Definition}
\newtheorem{corollary}[theorem]{Corollary}
\newtheorem{remark}[theorem]{Remark}
\newtheorem{proposition}[theorem]{Proposition}
\newtheorem*{theorem A}{Theorem A}
\newtheorem*{theorem B}{Theorem B}
\theoremstyle{definition}
\newtheorem{example}[theorem]{Example}
\DeclareMathOperator{\supp}{supp}
\DeclareMathOperator{\mdim}{mdim}
\DeclareMathOperator{\diam}{diam}
\title[On the mean $\Psi$-intermediate dimensions]{On the mean $\Psi$-intermediate dimensions}
\author[Yu Liu, Bilel Selmi \& Zhiming Li]{Yu Liu, Bilel Selmi \& Zhiming Li}
\address{{\bf Yu Liu and Zhiming Li: } School of Mathematics,
Northwest University,
Xi'an, 710127,
P.R.China }
\email{liuyu\_yzr@126.com;  china-lizhiming@163.com}
\address{{\bf Bilel Selmi: }  Analysis, Probability \& Fractals Laboratory LR18ES17, Department of Mathematics, Faculty of Sciences of Monastir,
University of Monastir, 5000-Monastir, Tunisia }
\email{bilel.selmi@fsm.rnu.tn, bilel.selmi@isetgb.rnu.tn}
\subjclass[2020]
{28A78, 28A80, 37B99.}
 \keywords{Mean fractal dimensions, Metric mean dimensions, Intermediate dimensions, Mass distribution principle, Frostman-type lemma, Hölder distortion}
\begin{document}
\maketitle
\thispagestyle{empty}
\begin{abstract}
In this paper, we introduce the mean $\Psi$-intermediate dimension which has a value between the mean Hausdorff dimension and the metric mean dimension, and prove the equivalent definition of the mean Hausdorff dimension and the metric mean dimension. Furthermore, we delve into the core properties of the mean $\Psi$-intermediate dimensions. Additionally, we establish the mass distribution principle, a Frostman-type lemma, Hölder distortion, and derive the corresponding product formula. Finally, we provide illustrative examples of the mean $\Psi$-intermediate dimension, demonstrating its practical applications.
\end{abstract}
\section{Introduction}

To comprehend how fractal subsets occupy space at small scales within a metric space, it is imperative to investigate diverse dimensions. Two particularly notable dimensions are the Hausdorff dimensions and the box dimensions. For numerous natural sets, these dimensions diverge, indicating a certain degree of inhomogeneity. The fundamental difference lies in the requirement for covering sets: the box dimension mandates them to be of uniform size, whereas the Hausdorff dimension does not impose such a constraint. Falconer et al. proposed a family of dimensions, termed intermediate dimensions, parameterized by $\theta \in [0,1]$, requiring that the covering sets have sizes within the intervals of $\left[\varepsilon^{1 / \theta}, \varepsilon\right]$. And Falconer et al. unites various properties of intermediate dimensions and exemplifies them through instances, see \cite{12,13}. The Hausdorff and box dimensions correspond to the extreme cases of $\theta = 0$ and $\theta = 1$, respectively. 
Intermediate dimensions have been explored in various studies, such as \cite{BA, 4,7,9, Douzi,11,n2,n3,36}, with explicit computations for fractal sets like Bedford-McMullen carpets \cite{3} and infinitely generated self-conformal sets \cite{2}.
For a given set, intermediate dimensions are continuous for each $\theta \in (0,1]$. For many sets, including Bedford-McMullen carpets (see \cite[Section 4]{13}, \cite{3}) and polynomial sequences (see \cite[Proposition 3.1]{13}), they are also continuous at $\theta = 0$, fully interpolating between the Hausdorff and box dimensions. This continuity at $\theta = 0$ is significant for the box dimensions of set projections \cite{6} and the set's images under stochastic processes like fractional Brownian motion \cite{5}. However, intermediate dimensions are discontinuous for many sets at $\theta = 0$ or remain constant at the box dimension value, offering limited insight into the set.

Intermediate dimensions represent a form of dimension interpolation, a relatively recent concept that has garnered significant interest. For an overview, see \cite{17}. This concept creates a family of dimensions that lie between two different notions, retaining characteristics of both while offering additional insights about the sets. This approach not only provides an interesting perspective, but also helps explain why the two endpoint dimensions can yield different values for certain sets.
The exploration of the dimension theory of function graphs, including the ``popcorn'' function and its higher-dimensional analogues, is presented by \cite{1}. The authors calculate various dimensions using the Chung–Erdős inequality, Duffin–Schaeffer estimates, and Euler's totient bound. The applications of their work involve bounding the dimensions of fractional Brownian images and studying Hölder distortion.
Another example of dimension interpolation is the Assouad spectrum, introduced by Fraser and Yu in \cite{20}. The Assouad spectrum spans the range between the upper box dimension and the Assouad dimension, where the Assouad dimension is introduced in \cite{15} ,providing insights into the "thickest" part of a set.
A more generalized class of dimensions, introduced in \cite{20}, was further developed by García, Hare, and Mendivil in \cite{23}, and studied extensively in \cite{n1,n4,16,19,22,24,25,37}. These dimensions allow for more flexible relative scales compared to the Assouad spectrum, offering detailed geometric information about sets with a quasi-Assouad dimension lower than the Assouad dimension. These Assouad-like dimensions partly inspired our consideration of the $\Psi$-intermediate dimensions.

The concept of mean dimension, introduced by Gromov in 1999 \cite{gro}, has emerged as a pivotal topological invariant in the realm of dynamical systems. This measure has since proven to be indispensable in addressing embedding problems within this field \cite{L, LT}. Expanding upon Gromov's framework, Lindenstrauss and Weiss introduced the metric mean dimension, establishing it as an upper bound for mean dimension and showcasing its efficacy in characterizing the topological intricacies of systems with infinite entropy. 

In 2013, Li introduced mean dimensions for continuous actions of countable sofic groups on compact metrizable spaces in \cite{LHF1}. These dimensions generalize the Gromov-Lindenstrauss-Weiss mean dimensions for actions of countable amenable groups and prove useful in distinguishing continuous actions of countable sofic groups with infinite entropy.
And in 2018, Li and Liang introduced an invariant, termed mean rank, for any module $\mathscr{M}$ belonging to the integral group ring of a discrete amenable group $\Gamma$ in \cite{LHF2}. This invariant is analogous to the rank of an abelian group. They demonstrate that the mean dimension of the induced $\Gamma$-action on the Pontryagin dual of $\mathscr{M}$, the mean rank of $\mathscr{M}$, and the von Neumann-Lück rank of $\mathscr{M}$ all coincide.

This development parallels the classical variational principle \cite{Wal82}, which bridges ergodic theory and topological dynamics. A key frontier lies in integrating ergodic theoretic concepts into mean-dimension theory, thereby establishing novel variational principles for the metric mean dimension. In particular, Lindenstrauss and Tsukamoto's seminal work in 2018 \cite{LT} unveiled a groundbreaking variational principle for the metric mean dimension, using rate-distortion functions from information theory. The resilience of this principle is evident through its compatibility with various candidates of the rate-distortion function \cite{GS21, Shi22, VV17, Wu21}. The allure and relevance of mean dimension have been underscored by a burgeoning interest in recent years, with an array of references delving into its applications, particularly within the context of the embedding problem in dynamical systems \cite{JMA1, JMA2, Backes, Cheng, Liu, LY, Yang}.

In this paper, we introduce the mean $\Psi$-intermediate dimensions, which allow us to cover set sizes within broader intervals defined by $[\Psi(\varepsilon), \varepsilon]$ for more general functions $\Psi$. These dimensions provide more detailed geometric information than mean dimensions, particularly for sets where the latter are discontinuous at $\theta = 0$. One of our most significant findings is that for any compact set, there exists a family of functions $\Psi$ that can fully interpolate between the mean Hausdorff and box dimensions.

Next, let us dive into the content of this paper. In Section \ref{Section2}, we define the mean Hausdorff dimension and the metric mean dimension and clarify the concepts of the upper and lower mean $\Psi$-intermediate dimensions. In Section \ref{Section3}, we establish equivalent definitions for the Hausdorff mean dimension and the metric mean dimension, extending this equivalence to the upper and lower mean $\Psi$-intermediate dimensions as well. Section \ref{Section4} investigates the fundamental properties of the mean $\Psi$-intermediate dimension, providing proofs for the mass distribution principle, Frostman-type lemma, and Hölder distortion, and deriving the associated product formula. Finally, Section \ref{Section5} presents three illustrative examples of the mean $\Psi$-intermediate dimension, demonstrating practical applications, and concluding our discussion.

\section{Preliminaries}\label{Section2}
Let $(\Omega,\Gamma)$ be a topological dynamical system (TDS), where $(\Omega,d)$ is a metric space and $\Gamma:\Omega\to \Omega$ is continues. Let $\mathcal{P}(\Omega)$ denote the power set of $\Omega$ and $\mathbb{N}=\left\{1,2,\dots\right\}$.
For $N\in\mathbb{N}$ and $\widetilde{\mathbf{u}},\widetilde{\mathbf{v}} \in \Omega$, set
$$
 d_{N}(\widetilde{\mathbf{u}},\widetilde{\mathbf{v}}) = \max_{0 \leq i \leq N-1} d\left(\Gamma^{i}(\widetilde{\mathbf{u}}), \Gamma^{i}(\widetilde{\mathbf{v}})\right).
$$
For any $\widetilde{\mathbf{u}}\in \Omega$ and $r\textgreater 0$ , let
$$\mathscr{B}_{d}(\widetilde{\mathbf{u}}, r)=\left\{\widetilde{\mathbf{v}} \in \Omega \big|\;\; \mathrm{d}(\widetilde{\mathbf{u}}, \widetilde{\mathbf{v}})\leq r\right\}\quad\text{and}\quad {U}_{d}(\widetilde{\mathbf{u}}, r)=\left\{\widetilde{\mathbf{v}}\in \Omega  \big|\;\; \mathrm{d}(\widetilde{\mathbf{u}}, \widetilde{\mathbf{v}})< r\right\}.$$

\begin{definition}
 A TDS $(\Omega,\Gamma)$ is called $C_\Gamma$-dynamically uniformly perfect if for every $N\in\mathbb{N}$, there exists $0<C_{N,\Gamma}<1$ such that for every $\widetilde{\mathbf{u}} \in \Omega$ and $r>0$ we have $\mathscr{B}_{d_N}(\widetilde{\mathbf{u}}, r) \setminus \mathscr{B}_{d_N}(\widetilde{\mathbf{u}}, C_{N,\Gamma} r) \neq \emptyset$  and  $C_{\Gamma}=\inf_{N\in\mathbb{N}}\{C_{N,\Gamma} \}>0$. 
\end{definition}

\begin{example}
For any $\widetilde{\mathbf{u}}=(\widetilde{\mathbf{u}}_n)_{n\in \mathbb{N}},\;\widetilde{\mathbf{v}}=(\widetilde{\mathbf{v}}_n)_{n\in \mathbb{N}}\in\mathbb{R}^\mathbb{N}$, let $d(\widetilde{\mathbf{u}},\widetilde{\mathbf{v}})=\displaystyle\sup_{i\in \mathbb{N}}\frac{1}{2^i}\rho(\widetilde{\mathbf{u}}_i,\widetilde{\mathbf{v}}_i)$ where $\rho(u,v)=\min\{|u-v|,1\}$ for any $u,v\in\mathbb{R}$. Consider the shift map $\sigma: (\mathbb{R}^\mathbb{N},d)\to (\mathbb{R}^\mathbb{N},d)$ defined by
\[  \sigma\left((\widetilde{\mathbf{u}}_n)_{n\in \mathbb{N}}\right) = (\widetilde{\mathbf{u}}_{n+1})_{n\in \mathbb{N}}. \]
Let $0<p<1$, obviously, $(\mathbb{R}^\mathbb{N},\sigma)$ is $p$-dynamically uniformly perfect. 
\end{example}

To consider the function $\mathscr{H}^{t}_{\varepsilon}$ on $\mathcal{P}(\Omega)$ for a given set $\Xi \subseteq \Omega$, with $t \geq 0$ and $\varepsilon > 0$, we define it as follows
\[  
\mathscr{H}_{\varepsilon}^{t}(\Xi,d) = \inf\left\{\sum_{i\in\mathbb{N}} (2r_i)^t \:\middle|\: \Xi \subseteq \bigcup_{i\in\mathbb{N}} \mathscr{B}_{d}(\widetilde{\mathbf{u}}_i, r_i) \text{ and } 2r_i < \varepsilon\right\}, \quad \text{for } \Xi \neq \emptyset,  
\]  
and  
\[  
\mathscr{H}_{\varepsilon}^{t}(\emptyset,d) = 0.  
\]  
Subsequently, we set  
\[  
\mathrm{\dim}_{\varepsilon}^{H}(\Xi,d) = \inf \left\{t \geq 0 \mid \overline{\mathscr{H}}_{\varepsilon}^{t}(\Xi,d) \leq 1\right\}.  
\]

\begin{definition}\label{d1}\cite{TM1}
Consider a TDS $(\Omega,\Gamma)$. For any $\Xi\subseteq \Omega$, we define the lower mean Hausdorff dimension and upper mean Hausdorff dimension of $\Xi$ respectively as follows
\begin{align*}  
\underline{\mdim}^{H}(\Xi,d,\Gamma) = \lim_{\varepsilon\to 0}\liminf_{N\to+\infty}\frac{\mathrm{dim}_{\varepsilon}^{H}(\Xi,d_N)}{N}
\end{align*} 
and 
\begin{align*}
\overline{\mdim}^{H}(\Xi,d,\Gamma) = \lim_{\varepsilon\to 0}\limsup_{N\to+\infty}\frac{\mathrm{dim}_{\varepsilon}^{H}(\Xi,d_N)}{N}.  
\end{align*}  
If $\underline{\mdim}^{H}(\Xi,d,\Gamma)$ equals $\overline{\mdim}^{H}(\Xi,d,\Gamma)$, we denote this common value by ${\mdim}^{H}(\Xi,d,\Gamma)$ and refer to it as the mean Hausdorff dimension of $\Xi$ with respect to $\Gamma$ and $d$.
\end{definition}

We define an $(N, d, \Gamma, \varepsilon)$-separated set $\widetilde{\Xi} \subseteq \Xi$ for a compact set $\Xi \subseteq \Omega$, $\varepsilon > 0$, and $N \in \mathbb{N}$, as a subset where the distance $d_{N}(\widetilde{\mathbf{u}}, \widetilde{\mathbf{v}}) \geq \varepsilon$ holds for any two distinct points $\widetilde{\mathbf{u}}, \widetilde{\mathbf{v}} \in \widetilde{\Xi}$. The maximum cardinality among all such $(N, d, \Gamma, \varepsilon)$-separated subsets of $\Xi$ is denoted by $\operatorname{sep}(\Xi, N, d, \Gamma, \varepsilon)$. Furthermore, we introduce the notation  
\[  
\operatorname{sep}(\Xi,d,\Gamma, \varepsilon) = \limsup_{N \rightarrow \infty} \frac{1}{N} \log \operatorname{sep}(\Xi,N,d,\Gamma, \varepsilon).  
\]  
A subset $\Xi^* \subseteq \Xi$ is termed an $(N, d, \Gamma, \varepsilon)$-spanning set of $\Xi$ if for every $\widetilde{\mathbf{u}} \in \Xi$, there exists $\widetilde{\mathbf{v}} \in \Xi^*$ such that $d_{N}(\widetilde{\mathbf{u}}, \widetilde{\mathbf{v}}) < \varepsilon$. The minimum cardinality among all such $(N, d, \Gamma, \varepsilon)$-spanning subsets of $\Xi$ is denoted by $\operatorname{span}(\Xi, N, d, \Gamma, \varepsilon)$. Additionally, we define  
\[  
\operatorname{span}(\Xi,d,\Gamma, \varepsilon) = \limsup_{N \rightarrow \infty} \frac{1}{N} \log \operatorname{span}(\Xi,N,d,\Gamma, \varepsilon).  
\]  
According to \cite[Lemma 2.1]{34}, it holds that  
\[  
\operatorname{sep}(\Xi,N,d,\Gamma, 2\varepsilon) \leq \operatorname{span}(\Xi,N,d,\Gamma, \varepsilon) \leq \operatorname{sep}(\Xi,N,d,\Gamma, \varepsilon),  
\]  
which subsequently implies the inequality  
\begin{equation}\label{ss}  
\operatorname{sep}(\Xi,d,\Gamma, 2\varepsilon) \leq \operatorname{span}(\Xi,d,\Gamma, \varepsilon) \leq \operatorname{sep}(\Xi,d,\Gamma, \varepsilon).  
\end{equation}

\begin{definition}\label{d2}\cite{TM1}
Consider a TDS $(\Omega, \Gamma)$. For any compact subset $\Xi \subseteq \Omega$, we define the lower and upper metric mean dimensions of the set $\Xi$  as follows
  
\begin{align*}  
\underline{\mdim}^{M}(\Xi,d,\Gamma) = \liminf_{\varepsilon \to 0} \frac{\operatorname{sep}(\Xi,d,\Gamma,\varepsilon)}{\vert \log \varepsilon \vert}
\end{align*} 
and 
\begin{align*}
\overline{\mdim}^{M}(\Xi,d,\Gamma) = \limsup_{\varepsilon \to 0} \frac{\operatorname{sep}(\Xi,d,\Gamma,\varepsilon)}{\vert \log \varepsilon \vert}.  
\end{align*}  
If $\underline{\mdim}^{M}(\Xi,d,\Gamma)$ equals $\overline{\mdim}^{M}(\Xi,d,\Gamma)$, we denote the common value by ${\mdim}^{M}(\Xi,d,\Gamma)$ and refer to it as the metric mean dimension of $\Xi$ with respect to $\Gamma$ and $d$.

\end{definition}
Based on \eqref{ss}, the lower and upper metric mean dimensions of $\Xi$ can also be defined as follows
\begin{align*}
\underline{\mdim}^{M}(\Xi,d,\Gamma)&=\liminf_{\varepsilon\to 0}\frac{\operatorname{span}(\Xi,d,\Gamma, \varepsilon)}{\vert \log\varepsilon \vert}\end{align*} and \begin{align*}
\overline{\mdim}^{M}(\Xi,d,\Gamma)&=\limsup_{\varepsilon\to 0}\frac{\operatorname{span}(\Xi,d,\Gamma, \varepsilon)}{\vert \log\varepsilon \vert}.
\end{align*}

\begin{definition}
For $0<\gamma\leq 1$, a function $\Psi:(0,\gamma)\to \mathbb{R}^{+}$ is admissible if $\Psi$ satisfies the following conditions,\vspace{0.2cm}
\begin{enumerate}
\item[(a)] $\Psi$ is monotonic,\vspace{0.2cm}
\item[(b)] $\Psi(\varepsilon)<\varepsilon$ for all $\varepsilon\in (0,\gamma)$,\vspace{0.2cm}
\item[(c)] $\displaystyle\lim_{\varepsilon\to 0^{+}}\frac{\Psi(\varepsilon)}{\varepsilon}=0$.
\end{enumerate}
\end{definition}

\begin{definition}\label{d4}
Let $(\Omega,\Gamma)$ be a TDS, $\Xi\subseteq \Omega$ and $\Psi$ be an admissible function, then we define the upper mean $\Psi$-intermediate dimension as
\begin{align*}
\overline{\mdim}_{\Psi}(\Xi,d,\Gamma)=\inf&\Bigg\{ s\geq 0\mid\text{For any }0<\delta<1\text{ there is }0<\varepsilon(\delta)<1   \text{ such that for any }0<\varepsilon<\varepsilon(\delta),\\
&\text{ there is }N(\delta,\varepsilon)\in\mathbb{N}\text{ which satisfies that for any }N>N(\delta,\varepsilon)\text{ there is a cover } \\&\{\mathscr{B}_{d_N}(\widetilde{\mathbf{u}}_i,r_i)\}_{i\in\mathbb{N}}
\text{ of }\Xi\text{ with }\Psi(\varepsilon)< 2r_i\leq\varepsilon \text{ for every }i\in\mathbb{N} \text{ and } \sum_{i\in\mathbb{N}}(2r_i)^{Ns}<\delta\Bigg\},
\end{align*}
and we define the lower mean $\Psi$-intermediate dimension as
\begin{align*}
\underline{\mdim}_{\Psi}(\Xi,d,\Gamma)=\inf\Bigg\{ &s\geq 0 \mid \text{For any }0<\delta<1,  0<\varepsilon_0 <1
\text{ and }N_0 \in\mathbb{N},
\text{ there exist }0<\varepsilon<\varepsilon_0,\\ &N>N_0
\text{ and a cover }
\{\mathscr{B}_{d_{N}}(\widetilde{\mathbf{u}}_i,r_i)\}_{i\in\mathbb{N}}
\text{ of } \Xi \text{ with }\Psi(\varepsilon)< 2r_i\leq\varepsilon
\text{ for }i\in\mathbb{N}\\
&\text{ and }
\sum_{i\in\mathbb{N}}(2r_i)^{Ns}<\delta\Bigg\}.
\end{align*}
If $\underline{\mdim}_{\Psi}(\Xi,d,\Gamma)$ equals $\overline{\mdim}_{\Psi}(\Xi,d,\Gamma)$, we denote the common value by ${\mdim}_{\Psi}(\Xi,d,\Gamma)$ and refer to it as the metric mean dimension of $\Xi$ with respect to $\Gamma$ and $d$.
\end{definition}


\begin{definition}\label{d6}
Let $(\Omega,\Gamma)$ be a TDS and $\Xi\subseteq \Omega$ be compact, then mean Assouad dimension is defined by
\begin{align*}
\mdim^{A}(\Xi,d,\Gamma)=\inf&\Bigg\{a:\text{There exist }C >0\text{ and }N(C)\in\mathbb{N} \text{ such that for all }
N>N(C),\\
&\operatorname{span}(\Xi\cap \mathscr{B}_{d_N}(\widetilde{\mathbf{u}},R), N,d,\Gamma, r)\leq C \left(\frac{R}{r}\right)^{a}\text{ for all }
\widetilde{\mathbf{u}}\in \Xi, R>0\\
&\hspace{5cm}\text{ and }0<r\leq\min\{1,R\}\Bigg\}.
\end{align*}
\end{definition}

\section{Equivalent definitions of mean dimensions}\label{Section3}
In this section, we provide equivalent definitions for two different mean dimensions. We begin with an equivalent definition of the mean Hausdorff dimension.
\begin{proposition}\label{l1}
Let $(\Omega,\Gamma)$ be a TDS and $\Xi\subseteq \Omega$, then
\begin{align*}
\overline{\mdim}^{H}(\Xi,d,\Gamma)=\inf\Bigg\{ &s\geq 0 \mid \text{For any }0<\delta<1, \text{ there is }0<\varepsilon(\delta)<1 \text{ such that for any }0<\varepsilon<\varepsilon(\delta),\\
&\text{ there is }N(\delta,\varepsilon)\in\mathbb{N} \text{ which satisfies that for any }N>N(\delta,\varepsilon)
\text{ there is a cover }\\&\{\mathscr{B}_{d_N}(\widetilde{\mathbf{u}}_i,r_i)\}_{i\in\mathbb{N}}
\text{ of } \Xi \text{ with } 0<2r_i\leq\varepsilon\text{ for }i\in\mathbb{N} \text{ and }\sum_{i\in\mathbb{N}}(2r_i)^{N(s+\delta)}< 1\Bigg\}
\end{align*}
and
\begin{align*}
\underline{\mdim}^{H}(\Xi,d,\Gamma)=\inf\Bigg\{ &s\geq 0 \mid \text{For any }0<\delta<1, 0<\varepsilon_0 <1
\text{ and }N_0\in\mathbb{N}
\text{ there exist }0<\varepsilon<\varepsilon_0, N>N_0\\
&\text{ and a cover }
\{\mathscr{B}_{d_{N}}(\widetilde{\mathbf{u}}_i,r_i)\}_{i\in\mathbb{N}}
\text{ of } \Xi \text{ with } 0<2r_i\leq\varepsilon
\text{ for }i\in\mathbb{N}
\text{ and }\\
&\hspace{5cm}\sum_{i\in\mathbb{N}}(2r_i)^{N(s+\delta)}<1\Bigg\}.
\end{align*}
\end{proposition}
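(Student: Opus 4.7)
The plan is to prove each equality separately, showing $\leq$ in both directions; the upper and lower cases are structurally identical, differing only in the quantifier order corresponding to $\limsup$ versus $\liminf$. Denote the right-hand sides by $\overline{S}$ and $\underline{S}$. The key observation is the translation between the covering description and the Hausdorff premeasure: by definition of $\mathrm{dim}^H_\varepsilon$, one has $\mathrm{dim}^H_\varepsilon(\Xi,d_N)\le t$ for every $t$ with $\mathscr{H}_\varepsilon^t(\Xi,d_N)\le 1$, and conversely every cover $\{\mathscr{B}_{d_N}(\widetilde{\mathbf{u}}_i,r_i)\}_i$ of $\Xi$ with $2r_i<\varepsilon$ and $\sum_i(2r_i)^t<1$ forces $\mathrm{dim}^H_\varepsilon(\Xi,d_N)\le t$. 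A second structural fact I will use is that $\varepsilon\mapsto \mathrm{dim}^H_\varepsilon(\Xi,d_N)$ is monotone non-increasing in $\varepsilon$, so the outer $\lim_{\varepsilon\to 0}$ in Definition \ref{d1} is actually a supremum.

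For $\overline{\mdim}^H\le \inf\overline{S}$, take $s\in\overline{S}$ and $\delta\in(0,1)$. The definition of $\overline{S}$ provides $\varepsilon(\delta)$ and $N(\delta,\varepsilon)$ so that for every small $\varepsilon$ and every $N>N(\delta,\varepsilon)$ there is a cover with $\sum_i(2r_i)^{N(s+\delta)}<1$. This directly gives $\mathscr{H}^{N(s+\delta)}_\varepsilon(\Xi,d_N)<1$, hence $\mathrm{dim}^H_\varepsilon(\Xi,d_N)\le N(s+\delta)$. Dividing by $N$, taking $\limsup_{N\to\infty}$ and then $\lim_{\varepsilon\to 0}$ yields $\overline{\mdim}^H(\Xi,d,\Gamma)\le s+\delta$; send $\delta\to 0$.

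For the reverse, fix $s>\overline{\mdim}^H(\Xi,d,\Gamma)$ and $\delta\in(0,1)$. By the sup-characterization, $\limsup_{N\to\infty}\mathrm{dim}^H_\varepsilon(\Xi,d_N)/N\le\overline{\mdim}^H<s$ for every $\varepsilon>0$. Choose any $\varepsilon(\delta)\in(0,1)$; for $\varepsilon<\varepsilon(\delta)$ and $N$ larger than some $N(\delta,\varepsilon)$ we have $\mathrm{dim}^H_\varepsilon(\Xi,d_N)<Ns$, and therefore a cover by $d_N$-balls of radii $2r_i<\varepsilon$ with $\sum_i(2r_i)^{Ns}\le 2$. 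The slack in the exponent then does the work:
\[
\sum_i(2r_i)^{N(s+\delta)}=\sum_i(2r_i)^{Ns}(2r_i)^{N\delta}\le \varepsilon^{N\delta}\sum_i(2r_i)^{Ns}\le 2\varepsilon^{N\delta},
\]
which is $<1$ once $N$ is large enough, since $\varepsilon<1$. By enlarging $N(\delta,\varepsilon)$ if needed, this places $s$ in $\overline{S}$, and letting $s\downarrow\overline{\mdim}^H$ completes the argument.

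For the lower statement the same two estimates apply but with the quantifier order appropriate to the $\liminf$. In the direction $\underline{\mdim}^H\le\inf\underline{S}$, given $s\in\underline{S}$ and $\varepsilon_0,N_0$, one extracts some $\varepsilon<\varepsilon_0$ and $N>N_0$ with a cover yielding $\mathrm{dim}^H_\varepsilon(\Xi,d_N)\le N(s+\delta)$, and then monotonicity $\mathrm{dim}^H_{\varepsilon_0}\le\mathrm{dim}^H_\varepsilon$ transfers the estimate to the fixed threshold $\varepsilon_0$, so $\liminf_N \mathrm{dim}^H_{\varepsilon_0}(\Xi,d_N)/N\le s+\delta$; taking $\sup_{\varepsilon_0}$ and $\delta\to 0$ finishes it. In the reverse direction, $s>\underline{\mdim}^H$ guarantees, for every $\varepsilon$, infinitely many $N$ with $\mathrm{dim}^H_\varepsilon/N<s$, and among these we can pick one also exceeding any prescribed $N_0$ and large enough for the same $\varepsilon^{N\delta}$ computation to force $\sum(2r_i)^{N(s+\delta)}<1$. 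The only delicate point is this bridge between the non-strict inequality $\mathscr{H}^{Ns}_\varepsilon\le 1$ and the strict requirement $\sum(2r_i)^{N(s+\delta)}<1$, which the factor $\varepsilon^{N\delta}\to 0$ resolves without needing any further refinement of the cover.
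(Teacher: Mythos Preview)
Your proof is correct and follows the same overall plan as the paper: translate between the covering condition defining $\overline{S}$ (resp.\ $\underline{S}$) and the quantity $\dim_\varepsilon^H(\Xi,d_N)$, then handle the two inequalities separately using the monotonicity of $\dim_\varepsilon^H$ in $\varepsilon$. The direction showing that membership in the defining set forces $\dim_\varepsilon^H(\Xi,d_N)\le N(s+\delta)$, and hence bounds the mean dimension from above, is argued identically in the paper. For the reverse direction, however, the paper proceeds by contraposition: it takes $t$ strictly below the infimum, negates the nested quantifier string, and extracts a subsequence of $N$'s along which every admissible cover satisfies $\sum_i(2r_i)^{N(t+\delta)}\ge 1$, whence $\dim_\varepsilon^H\ge N(t+\delta)$. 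Your route is direct: starting from $s>\overline{\mdim}^H$ you first obtain a cover with $\sum_i(2r_i)^{Ns}\le 2$ and then use the decay factor $\varepsilon^{N\delta}$ to lift the exponent to $N(s+\delta)$ while forcing the sum below $1$. This $\varepsilon^{N\delta}$ trick does not appear in the paper's argument; it trades the somewhat fiddly negation of a $\forall\exists\forall\exists\forall\exists$ statement for a concrete estimate, and has the pleasant feature that one never has to reason about \emph{all} covers failing a bound. Both arguments are sound; yours is arguably cleaner, and your explicit use of monotonicity in the lower case (transferring the bound from scale $\varepsilon$ back to the fixed threshold $\varepsilon_0$) makes transparent a step the paper leaves implicit.
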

\begin{proof}
Let
\begin{align*}
up=\inf\Bigg\{ &s\geq 0 \mid \text{For any }0<\delta<1, \text{ there is }0<\varepsilon(\delta)<1 \text{ such that for any }0<\varepsilon<\varepsilon(\delta),\\
&\text{ there is }N(\delta,\varepsilon)\in\mathbb{N} \text{ which satisfies that for any }N>N(\delta,\varepsilon)
\text{ there is a cover }\\&\{\mathscr{B}_{d_N}(\widetilde{\mathbf{u}}_i,r_i)\}_{i\in\mathbb{N}} 
\text{ of } \Xi \text{ with } 0<2r_i\leq\varepsilon\text{ for }i\in\mathbb{N} \text{ and }\sum_{i\in\mathbb{N}}(2r_i)^{N(s+\delta)}< 1\Bigg\}
\end{align*}
and
\begin{align*}
low=\inf\Bigg\{ &s\geq 0 \mid \text{For any }0<\delta<1, 0<\varepsilon_0 <1
\text{ and }N_0 \in\mathbb{N}
\text{ there exist }0<\varepsilon<\varepsilon_0, N>N_0\\
&\text{ and a cover }
\{\mathscr{B}_{d_{N}}(\widetilde{\mathbf{u}}_i,r_i)\}_{i\in\mathbb{N}}
\text{ of } \Xi \text{ with } 0<2r_i\leq\varepsilon
\text{ for }i\in\mathbb{N}
\text{ and }\\
&\hspace{5cm}\sum_{i\in\mathbb{N}}(2r_i)^{N(s+\delta)}<1\Bigg\}.
\end{align*}

\item[(a)]First we prove the equivalence definition of the upper mean Hausdorff dimension. Let $t<up$,
then there exists $0<\delta<1$ such that for any $0<\varepsilon_0<1$, there exists $0<\varepsilon\leq\varepsilon_0$ which satisfies that for any $N_0>0$ there exists $N\geq N_0$ such that for any cover $\{\mathscr{B}_{d_N}(\widetilde{\mathbf{u}}_i,r_i)\}_{i\in\mathbb{N}}$ of $\Xi$ with $0<2r_i\leq\varepsilon$ for $i\in\mathbb{N}$, we have
$$
\sum_{i\in\mathbb{N}}(2r_i)^{N(t+\delta)}\geq 1.
$$
This implies that for any $0<\varepsilon_0<1$, there exists an increasing sequence $\{N_i(\varepsilon_0)\}_{i\in\mathbb{N}}\subseteq\mathbb{N}$ with $\lim_{i\to\infty}N_i(\varepsilon_0)=\infty$ such that
for any $i\in\mathbb{N}$,
$$N_i(\varepsilon_0) (t+\delta)\leq\mathrm{\dim}_{\varepsilon_0}^{H}(\Xi,d_{N_i(\varepsilon_0)}).
$$
Therefore
\begin{align*}
\overline{\mdim}^{H}(\Xi,d,\Gamma)&=\lim_{\varepsilon_0 \to 0}\limsup_{N \to\infty}\frac{\mathrm{\dim}_{\varepsilon_0}^{H}(\Xi,d_{N})}{N}\\
&\geq \lim_{\varepsilon_0 \to 0}\limsup_{i \to\infty}\frac{\mathrm{\dim}_{\varepsilon_0}^{H}(\Xi,d_{N_{i}(\varepsilon_0)})}{N_{i}(\varepsilon_0)}
\geq t+\delta\geq t.
\end{align*}
Assume that $t>up$,
then for any $0<\delta<1$, there is a $0<\varepsilon(\delta)<1$ such that for any $\varepsilon\in(0,\varepsilon(\delta))$ there is a $N(\delta,\varepsilon)>0$ which satisfies that for any $N>N(\delta,\varepsilon)$ there is a cover $\{\mathscr{B}_{d_N}(\widetilde{\mathbf{u}}_i,r_i)\}_{i\in\mathbb{N}}$ of $\Xi$ with $0<2r_i\leq\varepsilon$ for every $i\in\mathbb{N}$ and
$$
\sum_{i\in\mathbb{N}}(2r_i)^{N(t+\delta)}< 1.
$$
Thus, $\dim_{\varepsilon}^{H}(\Xi,d_N)\leq N(t+\delta)$ and
$$
\overline{\mdim}^{H}(\Xi,d,\Gamma)=\lim_{\varepsilon\to 0}\limsup_{N\to+\infty}\frac{\mathrm{\dim}_{\varepsilon}^{H}(\Xi,d_N)}{N} \leq t+\delta.
$$
Since $\delta$ is arbitrary, then
$$
\overline{\mdim}^{H}(\Xi,d,\Gamma)\leq t.
$$

\item[(b)] Next we prove the equivalence definition of the lower mean Hausdorff dimension. Let $t<low$,
then there exist $0<\delta<1$ and $0<\varepsilon(\delta)<1$ such that for any $0<\varepsilon<\varepsilon(\delta)$, there is $N(\delta,\varepsilon)>0$ which satisfies for any $N>N(\delta,\varepsilon)$ and any cover $\{\mathscr{B}_{d_N}(\widetilde{\mathbf{u}}_i,r_i)\}_{i\in\mathbb{N}}$ of $\Xi$ with $0<2r_i\leq\varepsilon$ for $i\in\mathbb{N}$, we have
$$
\sum_{i\in\mathbb{N}}(2r_i)^{N(t+\delta)}\geq 1.
$$
Thus
$$
N(t+\delta)\leq\mathrm{\dim}_{\varepsilon}^{H}(\Xi,d_N)
$$
and
$$
\underline{\mdim}^{H}(\Xi,d,\Gamma)=\lim_{\varepsilon\to 0}\liminf_{N\to+\infty}\frac{\mathrm{\dim}_{\varepsilon}^{H}(\Xi,d_N)}{N}\geq t+\delta\geq t.
$$

Let $t>low$,
then for any $0<\delta<1$, $0<\varepsilon_0 <1$ and $N_0>0$ there exist $0<\varepsilon\leq\varepsilon_0$, $N\geq N_0$ and a cover $\{\mathscr{B}_{d_{N}}(\widetilde{\mathbf{u}}_i,r_i)\}_{i\in\mathbb{N}}$
of $\Xi$ with $0<2r_i\leq\varepsilon$ for every $i\in\mathbb{N}$ such that $\sum_{i\in\mathbb{N}}(2r_i)^{N(t+\delta)}<1$. 

This implies that for any $0<\varepsilon_0<1$, there exists an increasing sequence $\{N_i(\varepsilon_0)\}_{i\in\mathbb{N}}\subseteq\mathbb{N}$ with $\lim_{i\to\infty}N_i(\varepsilon_0)=\infty$ such that
for any $i\in\mathbb{N}$,
$$\dim_{\varepsilon_0}^{H}(\Xi,d_{N_i(\varepsilon_0)})\leq N_i(\varepsilon_0) (t+\delta),$$ which gives that
\begin{align*}
\underline{\mdim}^{H}(\Xi,d,\Gamma)&=\lim_{\varepsilon_0 \to 0}\liminf_{N \to\infty}\frac{\dim_{\varepsilon_0}^{H}(\Xi,d_{N})}{N}\\
&\leq\lim_{\varepsilon_0 \to 0}\liminf_{i\to\infty}\frac{\dim_{\varepsilon_0}^{H}(\Xi,d_{N_i(\varepsilon_0)})}{N_i(\varepsilon_0)}\leq t+\delta.
\end{align*}
Since $\delta$ is arbitrary, then
$$
\underline{\mdim}^{H}(\Xi,d,\Gamma)\leq t.
$$
\end{proof}

Next, we give the equivalent definition of metric mean dimension.
\begin{proposition}\label{l2}
Let $(\Omega,\Gamma)$ be a TDS and $\Xi\subseteq \Omega$ be a compact subset, then
\begin{align*}
\overline{\mdim}^{M}(\Xi,d,\Gamma)=\inf&\Bigg\{ s\geq 0\mid\text{For any }0<\delta<1\text{ there is }0<\varepsilon(\delta) <1  \text{ such that for any }\varepsilon\in(0,\varepsilon(\delta)),\\
&\text{ there is }N(\delta,\varepsilon)\in\mathbb{N}\text{ which satisfies that for any }N>N(\delta,\varepsilon)\text{ there is a cover }\\ 
&\{\mathscr{B}_{d_N}(\widetilde{\mathbf{u}}_i,r_i)\}_{i}
\text{ of }\Xi\text{ with }2r_i=\varepsilon \text{ for every }i \text{ and } \sum_{i}(2r_i)^{Ns}<\delta\Bigg\}
\end{align*}
and
\begin{align*}
\underline{\mdim}^{M}(\Xi,d,\Gamma)=\inf&\Bigg\{ s\geq 0\mid\text{For any }0<\delta<1\text{ and }0<\varepsilon_0 <1 \text{ there exist }\varepsilon\in(0,\varepsilon_0) \text{ and }\\
&N(\delta,\varepsilon)\in\mathbb{N}\text{ which satisfies for any }N>N(\delta,\varepsilon)\text{ there is a cover } \{\mathscr{B}_{d_N}(\widetilde{\mathbf{u}}_i,r_i)\}_{i}\\
&\hspace{3cm}\text{ of }\Xi\text{ with }2r_i=\varepsilon \text{ for every }i \text{ and } \sum_{i}(2r_i)^{Ns}<\delta\Bigg\}.
\end{align*}
\end{proposition}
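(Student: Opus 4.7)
The plan is to adapt the structure of the proof of Proposition \ref{l1}, translating between the covering condition $\sum_i(2r_i)^{Ns} < \delta$ with $2r_i = \varepsilon$ and the separated-set counts $\operatorname{sep}(\Xi, N, d, \Gamma, \varepsilon)$ underlying the metric mean dimension. A cover of cardinality $K$ by closed $d_N$-balls of radius $\varepsilon/2$ satisfies $\sum_i(2r_i)^{Ns} = K\varepsilon^{Ns}$, so the covering condition becomes $K < \delta\varepsilon^{-Ns}$. After taking logarithms and dividing by $N$, this is essentially equivalent to a bound $\operatorname{sep}(\Xi, d, \Gamma, \varepsilon') \leq s|\log\varepsilon|$ at a scale $\varepsilon'$ comparable to $\varepsilon$, which upon division by $|\log\varepsilon'|$ recovers the metric mean dimension.

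For $\overline{\mdim}^{M}(\Xi, d, \Gamma) \leq up$, take $t > up$; the hypothesis yields, for every small $\varepsilon$ and all $N > N(\delta,\varepsilon)$, a cover by $K$ closed balls of radius $\varepsilon/2$ with $K < \delta\varepsilon^{-Nt}$. Since two points in such a ball have $d_N$-distance at most $\varepsilon$, any $(N,d,\Gamma,\varepsilon+\eta)$-separated set contains at most one point per ball, so $\operatorname{sep}(\Xi,N,d,\Gamma,\varepsilon+\eta) \leq K$; passing to $\limsup_{N}$ and then sending $\varepsilon,\eta \to 0$ delivers $\overline{\mdim}^{M}(\Xi, d, \Gamma) \leq t$. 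For the reverse inequality, take $t > \overline{\mdim}^{M}(\Xi, d, \Gamma)$ and fix $\eta>0$ with $\overline{\mdim}^{M}(\Xi, d, \Gamma) + 2\eta < t$; for small $\varepsilon$ and all sufficiently large $N$ one has $\operatorname{sep}(\Xi,N,d,\Gamma,\varepsilon/2) < (\varepsilon/2)^{-N(\overline{\mdim}^{M}(\Xi, d, \Gamma) + \eta)}$. A maximal $(N,d,\Gamma,\varepsilon/2)$-separated set of this size covers $\Xi$ by closed $d_N$-balls of radius $\varepsilon/2$ centered at its points, since maximality forbids adjoining any further point of $\Xi$. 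Then
\[
K\varepsilon^{Nt} \leq \left(2^{\overline{\mdim}^{M}(\Xi, d, \Gamma) + \eta}\,\varepsilon^{\,t - \overline{\mdim}^{M}(\Xi, d, \Gamma) - \eta}\right)^{N}
\]
is less than $\delta$ once $\varepsilon$ is small enough that the base lies below one and $N$ is sufficiently large, placing $t$ in $up$.

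The lower case follows the same template, with "for every small $\varepsilon$" replaced by "for some arbitrarily small $\varepsilon$" throughout, matching the $\liminf_{\varepsilon \to 0}$ in the definition of $\underline{\mdim}^{M}(\Xi, d, \Gamma)$ and the existential quantifier on $\varepsilon$ in $low$. The main technical obstacle, already present in the proof of Proposition \ref{l1}, is reconciling the $\limsup_{N}$ hidden inside $\operatorname{sep}(\Xi,d,\Gamma,\varepsilon)$ with the uniformity in $N$ demanded by the covering formulation; as in the Hausdorff case, this is handled by extracting an appropriate increasing subsequence of $N$'s when transferring information between the two viewpoints, together with the elementary observation that $|\log\varepsilon|/|\log(\varepsilon+\eta)| \to 1$ as $\varepsilon,\eta \to 0$.
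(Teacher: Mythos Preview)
Your proposal is correct and follows essentially the same approach as the paper: both arguments reduce the covering condition $\sum_i(2r_i)^{Ns}<\delta$ with $2r_i=\varepsilon$ to an inequality of the form $K\varepsilon^{Ns}<\delta$ for a counting function $K$, take logarithms, and pass to the appropriate limits in $N$ and $\varepsilon$. The only cosmetic difference is that the paper works directly with $\operatorname{span}(\Xi,N,d,\Gamma,\varepsilon)$ (which already counts minimal covers, so no perturbation or maximal-separated-set argument is needed), whereas you route through $\operatorname{sep}$ and hence introduce the auxiliary $\eta$ and the maximal-separated-set-to-cover step; since $\operatorname{sep}$ and $\operatorname{span}$ are comparable by \eqref{ss}, the two variants are interchangeable.
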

\begin{proof}
Let
\begin{align*}
    up=\inf&\Bigg\{ s\geq 0\mid\text{For any }0<\delta<1\text{ there is }0<\varepsilon(\delta) <1  \text{ such that for any }\varepsilon\in(0,\varepsilon(\delta)),\\
&\text{ there is }N(\delta,\varepsilon)\in\mathbb{N}\text{ which satisfies that for any }N>N(\delta,\varepsilon)\text{ there is a cover }\\ 
&\{\mathscr{B}_{d_N}(\widetilde{\mathbf{u}}_i,r_i)\}_{i}
\text{ of }\Xi\text{ with }2r_i=\varepsilon \text{ for every }i \text{ and } \sum_{i}(2r_i)^{Ns}<\delta\Bigg\}
\end{align*}
and
\begin{align*}
    low=\inf&\Bigg\{ s\geq 0\mid\text{For any }0<\delta<1\text{ and }0<\varepsilon_0 <1 \text{ there exist }\varepsilon\in(0,\varepsilon_0) \text{ and }\\
&N(\delta,\varepsilon)\in\mathbb{N}\text{ which satisfies for any }N>N(\delta,\varepsilon)\text{ there is a cover } \{\mathscr{B}_{d_N}(\widetilde{\mathbf{u}}_i,r_i)\}_{i}\\
&\hspace{3cm}\text{ of }\Xi\text{ with }2r_i=\varepsilon \text{ for every }i \text{ and } \sum_{i}(2r_i)^{Ns}<\delta\Bigg\}.
\end{align*}

\item[(a)] Initially, we establish the equivalence of the definition for the upper metric mean dimension. Let $t>up$,
then for any $\delta>0$, there is $0<\varepsilon(\delta)<1$ such that for any $0<\varepsilon<\varepsilon(\delta)$, there is $N(\delta,\varepsilon)>0$ such that for any $N>N(\delta,\varepsilon)$ there is a cover $\{\mathscr{B}_{d_N}(\widetilde{\mathbf{u}}_i,r_i)\}_{i\in\mathbb{N}}$ of $\Xi$ with $2r_i=\varepsilon$ for every $i\in\mathbb{N}$ such that $\operatorname{span}(\Xi,N,d,\Gamma, \varepsilon)\varepsilon^{Nt}<\delta$, then we obtain
\begin{align*}
\frac{\log\operatorname{span}(\Xi,N,d,\Gamma, \varepsilon)}{N}+t\log\varepsilon\leq\frac{\log\delta}{N}
\end{align*}
and
\begin{align*}
\limsup_{\varepsilon\to 0}\limsup_{N\to\infty}\frac{\log\operatorname{span}(\Xi,N,d,\Gamma, \varepsilon)}{-N\log\varepsilon}\leq t.
\end{align*}

On the other hand, let $t>\overline{\mdim}^{M}(\Xi,d,\Gamma)$. For every $\delta>0$, we have
\begin{align*}
\liminf_{\varepsilon\to 0}\limsup_{N\to \infty}\Big(t-\frac{\log\delta}{N\log\varepsilon}\Big)>\limsup_{\varepsilon\to 0}\limsup_{N\to\infty}\frac{\log\operatorname{span}(\Xi,N,d,\Gamma, \varepsilon)}{-N\log\varepsilon}.
\end{align*}
Then there is  $0<\varepsilon(\delta)<1$ such that for every $0<\varepsilon<\varepsilon(\delta)$,
\begin{align*}
\limsup_{N\to \infty}\Big(t-\frac{\log\delta}{N\log\varepsilon}\Big)>\limsup_{N\to\infty}\frac{\log\operatorname{span}(\Xi,N,d,\Gamma, \varepsilon)}{-N\log\varepsilon}.
\end{align*}
This implies that there is $N(\delta,\varepsilon)>0$ such that for every $N>N(\delta,\varepsilon)$, we have
\begin{align*}
t-\frac{\log\delta}{N\log\varepsilon}>\frac{\log\operatorname{span}(\Xi,N,d,\Gamma, \varepsilon)}{-N\log\varepsilon}.
\end{align*}
Thus $\operatorname{span}(\Xi,N,d,\Gamma, \varepsilon)~\varepsilon^{Nt}<\delta$, which implies that there is a cover $\{\mathscr{B}_{d_N}(\widetilde{\mathbf{u}}_i,r_i)\}_{i\in\mathbb{N}}$ of $\Xi$ with $2r_i=\varepsilon$ for every $i\in\mathbb{N}$ such that $\sum_{i\in\mathbb{N}}(2r_i)^{Nt}<\delta$. So we can deduce that $t\geq up$.

\item[(b)]Afterward, we demonstrate the equivalence of the definition for the lower metric mean dimension. Suppose that $t>low$,
then for any $0<\delta<1$ and $0<\varepsilon_0 <1$, there exists $0<\varepsilon\leq\varepsilon_0$ and $N(\delta,\varepsilon)>0$ such that for any $N>N(\delta,\varepsilon)$ there exist a cover $\{\mathscr{B}_{d_N}(\widetilde{\mathbf{u}}_i,r_i)\}_{i\in\mathbb{N}}$ of $\Xi$ with $2r_i=\varepsilon$ for $i\in\mathbb{N}$ such that $\operatorname{span}(\Xi,N,d,\Gamma, \varepsilon)~\varepsilon^{Nt}<\delta$, then
\begin{align*}
\frac{\log\operatorname{span}(\Xi,N,d,\Gamma, \varepsilon_0)}{N}+t\log\varepsilon_0 \leq\frac{\log\delta}{N}
\end{align*}
and
\begin{align*}
\liminf_{\varepsilon\to 0}\limsup_{N\to\infty}\frac{\log\operatorname{span}(\Xi,N,d,\Gamma, \varepsilon_0)}{-N\log\varepsilon_0}\leq t.
\end{align*}

On the other hand, let $t<low$,
then there exist $0<\delta<1$ and $0<\varepsilon(\delta) <1$, for any $0<\varepsilon<\varepsilon(\delta)$ and $N_0 >0$, there is a $N\geq N_0$ such that for any cover $\{\mathscr{B}_{d_N}(\widetilde{\mathbf{u}}_i,r_i)\}_{i\in\mathbb{N}}$ of $\Xi$ with $2r_i=\varepsilon$ for $i\in\mathbb{N}$, we have $\operatorname{span}(\Xi,N,d,\Gamma, \varepsilon)~\varepsilon^{Nt}\geq \delta$. 

This implies that for any $0<\varepsilon<\varepsilon(\delta)$, there exists an increasing sequence $\{N_i(\varepsilon)\}_{i\in\mathbb{N}}\subseteq\mathbb{N}$ with $\lim_{i\to\infty}N_i(\varepsilon)=\infty$ such that
for any $i\in\mathbb{N}$,
\begin{align*}
\frac{\log\operatorname{span}(\Xi,N_i(\varepsilon),d,\Gamma, \varepsilon)}{N_i(\varepsilon)}+t\log\varepsilon\geq \frac{\log\delta}{N_i(\varepsilon)}
\end{align*}
and
\begin{align*}
\underline{\mdim}^{M}(\Xi,d,\Gamma)&=\liminf_{\varepsilon\to 0}\limsup_{N \to\infty}\frac{\log\operatorname{span}(\Xi,N ,d,\Gamma, \varepsilon)}{-N\log\varepsilon}\\
&\geq\liminf_{\varepsilon\to 0}\limsup_{i \to\infty}\frac{\log\operatorname{span}(\Xi,N_i(\varepsilon) ,d,\Gamma, \varepsilon)}{-N_i(\varepsilon)\log\varepsilon}\geq t.
\end{align*}
\end{proof}

Finally, we present an equivalent characterization of the mean $\Psi$-intermediate dimension of the space $\Omega$. 
Let $\diam_{d}(\Xi)$ denote the diameter of $\Xi$ with respect to the metric $d$. Assume that $\Psi$ represents an admissible function and that $(\Omega, \Gamma)$ is dynamically uniformly perfect. We suppose that $\mdim^{A}(\Omega, d, \Gamma) < +\infty$.

If $\overline{\mdim}_{\Psi}(\Omega,d,\Gamma)>0$, then
\begin{align*}
\overline{\mdim}_{\Psi}(\Omega,d,\Gamma)=\sup\Bigg\{&s>0 \mid \text{ there exists } C\in (0,+\infty) \text{ such that for all } 0<\varepsilon_0 <1 \text{ and } N_0 \in\mathbb{N}, \\
&\text{ there exist } 0<\varepsilon\leq \varepsilon_0, N>N_0 \text{ and a measure } \nu_{\varepsilon,N}  
\text{ such that if } U \text{ is a Borel }\\
&\text{ subset of } \Omega \text{ which satisfies } \Psi(\varepsilon)<\diam_{d_N}(U)\leq \varepsilon
\text{ then } \\&\nu_{\varepsilon,N}(U)\leq C \diam_{d_N}(U)^{Ns}\Bigg\}.
\end{align*}

If $\underline{\mdim}_{\Psi}(\Omega,d,\Gamma)>0$, then
\begin{align*}
\underline{\mdim}_{\Psi}(\Omega,d,\Gamma)=\sup\Bigg\{&s>0 \mid \text{ there exists } C\in (0,+\infty) \text{ such that there exists  } 0<\varepsilon(C) <1 \text{ for any }\\
&0<\varepsilon<\varepsilon(C)\text{ there exists } N(C,\varepsilon) \in\mathbb{N} \text{ which satisfies for any }N>N(C,\varepsilon) ,\\ &\text{ there exists }
\text{ a measure }\nu_{\varepsilon,N} 
\text{ such that if } U \text{ is a Borel subset of } \Omega \text{ with}\\  & \Psi(\varepsilon)<\diam_{d_N}(U)\leq \varepsilon
\text{ then } \nu_{\varepsilon,N}(U)\leq C \diam_{d_N}(U)^{Ns}\Bigg\}.
\end{align*}

Currently, we cannot offer proof for this equivalent definition as it necessitates employing the mass distribution principle and a Frostman-type lemma. We will finalize this proof in the upcoming sections; refer to Proposition \ref{p343}.

\section{Some properties of the mean $\Psi$-intermediate dimensions}\label{Section4}
\subsection{Basic properties} We begin by exploring fundamental characteristics of mean intermediate dimensions, which are commonly encountered in various dimension definitions. It should be emphasized that the proof methods in this section are mainly based on \cite{BA} and \cite{13}.
\begin{lemma}\label{l3}
Let $(\Omega,\Gamma)$ be a TDS and $\Xi$ be a compact subset of $\Omega$, then
\begin{align*}
0\leq \underline{\mdim}^{H}(\Xi,d,\Gamma)\leq \overline{\mdim}^{H}(\Xi,d,\Gamma)
\leq \overline{\mdim}_{\Psi}(\Xi,d,\Gamma)\leq \overline{\mdim}^{M}(\Xi,d,\Gamma)
\end{align*}
and
\begin{align*}
0\leq\underline{\mdim}^{H}(\Xi,d,\Gamma)\leq\underline{\mdim}_{\Psi}(\Xi,d,\Gamma)\leq \underline{\mdim}^{M}(\Xi,d,\Gamma)\leq \overline{\mdim}^{M}(\Xi,d,\Gamma).
\end{align*}
\end{lemma}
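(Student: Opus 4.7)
The plan is to reduce each non-trivial inequality to the equivalent characterizations from Proposition \ref{l1} and Proposition \ref{l2}, and then to note that the family of covers permitted in Definition \ref{d4} lies between those used for the mean Hausdorff and the metric mean dimensions. The non-negativity bounds are immediate, $\underline{\mdim}^{H} \leq \overline{\mdim}^{H}$ and $\underline{\mdim}^{M} \leq \overline{\mdim}^{M}$ come from $\liminf \leq \limsup$ built into Definitions \ref{d1} and \ref{d2}, and $\underline{\mdim}_{\Psi} \leq \overline{\mdim}_{\Psi}$ follows because any cover witnessing the upper condition of Definition \ref{d4} at a pair $(\varepsilon, N)$ lying below prescribed thresholds simultaneously witnesses the corresponding existential clause in the lower definition.

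First I would prove $\overline{\mdim}^{H}(\Xi,d,\Gamma) \leq \overline{\mdim}_{\Psi}(\Xi,d,\Gamma)$. Fix $s$ in the set whose infimum defines $\overline{\mdim}_{\Psi}(\Xi,d,\Gamma)$; then for each $0 < \delta < 1$ there is $\varepsilon(\delta) \in (0,1)$ and, for each $\varepsilon \in (0, \varepsilon(\delta))$, an index $N(\delta,\varepsilon) \in \mathbb{N}$ such that for every $N > N(\delta, \varepsilon)$ some cover $\{\mathscr{B}_{d_N}(\widetilde{\mathbf{u}}_i, r_i)\}_{i \in \mathbb{N}}$ of $\Xi$ satisfies $\Psi(\varepsilon) < 2r_i \leq \varepsilon$ and $\sum_i (2r_i)^{Ns} < \delta$. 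Because $2r_i \leq \varepsilon < 1$ one has $(2r_i)^{N(s+\delta)} \leq (2r_i)^{Ns}$, so $\sum_i (2r_i)^{N(s+\delta)} < \delta < 1$. The same cover also satisfies the looser constraint $0 < 2r_i \leq \varepsilon$, so the equivalent characterization from Proposition \ref{l1} gives $\overline{\mdim}^{H}(\Xi,d,\Gamma) \leq s$; taking the infimum over admissible $s$ completes the step. The inequality $\underline{\mdim}^{H} \leq \underline{\mdim}_{\Psi}$ is obtained identically using the lower-dimension versions of Definition \ref{d4} and Proposition \ref{l1}.

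Next I would prove $\overline{\mdim}_{\Psi}(\Xi,d,\Gamma) \leq \overline{\mdim}^{M}(\Xi,d,\Gamma)$. Fix $s$ in the defining set of $\overline{\mdim}^{M}(\Xi,d,\Gamma)$ from Proposition \ref{l2}; this yields, for every $0 < \delta < 1$, thresholds $\varepsilon(\delta)$ and $N(\delta,\varepsilon)$ and a cover of $\Xi$ with $2r_i = \varepsilon$ and $\sum_i (2r_i)^{Ns} < \delta$. Admissibility of $\Psi$ gives $\Psi(\varepsilon) < \varepsilon = 2r_i$, so the cover meets the conditions in Definition \ref{d4} and certifies $\overline{\mdim}_{\Psi}(\Xi,d,\Gamma) \leq s$. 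The lower counterpart $\underline{\mdim}_{\Psi} \leq \underline{\mdim}^{M}$ is handled in the same way through the lower clause of Proposition \ref{l2}.

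The only genuine bookkeeping obstacle is the mismatch between the exponent $Ns$ appearing in Definition \ref{d4} and the exponent $N(s+\delta)$ appearing in Proposition \ref{l1}, which is resolved by exploiting $2r_i < 1$ so that raising to the larger exponent only decreases each summand. Past this point the whole lemma reduces to the set-inclusion principle that tightening the constraint on cover radii enlarges the defining infimum, with the $\Psi$-intermediate range $\Psi(\varepsilon) < 2r_i \leq \varepsilon$ sitting between the Hausdorff range $0 < 2r_i \leq \varepsilon$ and the metric-mean equality $2r_i = \varepsilon$.
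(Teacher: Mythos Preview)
Your proposal is correct and follows essentially the same approach as the paper. Both arguments establish $\overline{\mdim}^{H}\leq\overline{\mdim}_{\Psi}$ by taking an $s$ above (or in the defining set of) $\overline{\mdim}_{\Psi}$, observing that $2r_i<1$ forces $(2r_i)^{N(s+\delta)}\leq(2r_i)^{Ns}$, and then invoking the characterization of Proposition~\ref{l1}; the remaining inequalities, including your explicit use of admissibility $\Psi(\varepsilon)<\varepsilon$ for $\overline{\mdim}_{\Psi}\leq\overline{\mdim}^{M}$, are exactly what the paper dismisses as ``direct consequences of Lemmas~\ref{l1}, \ref{l2}, and Definition~\ref{d4}.''
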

\begin{proof}

First we prove $\overline{\mdim}^{H}(\Xi,d,\Gamma)\leq \overline{\mdim}_{\Psi}(\Xi,d,\Gamma)$. 
Take $\psi>\overline{\mdim}_{\Psi}(\Xi,d,\Gamma)$, then for any $0<\delta<1$ there is $0<\varepsilon(\delta)<1$ such that for any $\varepsilon\in(0,\varepsilon(\delta))$, there is $N(\delta,\varepsilon)\in\mathbb{N}$ which satisfies for any $N>N(\delta,\varepsilon)$ there is a cover $\{\mathscr{B}_{d_N}(\widetilde{\mathbf{u}}_i,r_i)\}_{i\in\mathbb{N}}$ of $\Xi$ with $\Psi(\varepsilon)< 2r_i\leq\varepsilon$ for every $i\in\mathbb{N}$ such that $\sum_{i\in\mathbb{N}}(2r_i)^{N\psi}<\delta $. Then
\begin{align*}
    \sum_{i\in\mathbb{N}}(2r_i)^{N(\psi+\delta)}=\sum_{i\in\mathbb{N}}(2r_i)^{N\psi}(2r_i)^{N\delta}\leq \sum_{i\in\mathbb{N}}(2r_i)^{N\psi}<\delta<1.
\end{align*}
Thus, $\overline{\mdim}^{H}(\Xi,d,\Gamma)\leq \psi$ and $\overline{\mdim}^{H}(\Xi,d,\Gamma)\leq \overline{\mdim}_{\Psi}(\Xi,d,\Gamma)$.

The proof of $\underline{\mdim}^{H}(\Xi,d,\Gamma)\leq \underline{\mdim}_{\Psi}(\Xi,d,\Gamma)$ is very similar and has been taken out.

The other inequalities are direct consequences of Lemmas \ref{l1}, \ref{l2}, and Definition \ref{d4}.
\end{proof}

\begin{lemma}
Let $(\Omega,\Gamma)$ be a TDS and $\Xi$ be a compact subset of $\Omega$. If $\Gamma(\Xi)\subseteq \Xi$, then
$$\overline{\mdim}^{M}(\Xi,d,\Gamma)\leq \mdim^{A}(\Xi,d,\Gamma).$$
\end{lemma}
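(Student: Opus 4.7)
The plan is to exploit the forward invariance $\Gamma(\Xi)\subseteq \Xi$ to cover $\Xi$ by a single dynamical ball of fixed radius $D:=\diam_d(\Xi)$, and then specialize the Assouad-type scaling estimate of Definition \ref{d6} to that ball. Invariance forces $\diam_{d_N}(\Xi)=D$ for every $N$, which collapses the global $d_N$-covering problem into a single local Assouad estimate at the uniform ratio $D/\varepsilon$.

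First I would prove the diameter equality $\diam_{d_N}(\Xi)=\diam_d(\Xi)=D$ for every $N\in\mathbb{N}$. The inequality $\diam_d(\Xi)\leq\diam_{d_N}(\Xi)$ is immediate from $d\leq d_N$. For the reverse, pick $\widetilde{\mathbf{u}},\widetilde{\mathbf{v}}\in\Xi$; forward invariance yields $\Gamma^i\widetilde{\mathbf{u}},\Gamma^i\widetilde{\mathbf{v}}\in\Xi$ for every $i\geq 0$, so $d(\Gamma^i\widetilde{\mathbf{u}},\Gamma^i\widetilde{\mathbf{v}})\leq D$, and maximizing over $0\leq i\leq N-1$ gives $d_N(\widetilde{\mathbf{u}},\widetilde{\mathbf{v}})\leq D$. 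Compactness of $\Xi$ ensures $D<\infty$, so fixing any $\widetilde{\mathbf{u}}_0\in\Xi$ we obtain $\Xi\subseteq\mathscr{B}_{d_N}(\widetilde{\mathbf{u}}_0,D)$ for every $N$.

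Next I would fix $a>\mdim^A(\Xi,d,\Gamma)$ and invoke Definition \ref{d6} to obtain $C>0$ and $N(C)\in\mathbb{N}$ such that the Assouad scaling bound applies for all $N>N(C)$, $\widetilde{\mathbf{u}}\in\Xi$, $R>0$ and $0<r\leq\min\{1,R\}$. Specializing to $\widetilde{\mathbf{u}}=\widetilde{\mathbf{u}}_0$, $R=D$ and $r=\varepsilon$ with $0<\varepsilon\leq\min\{1,D\}$, the inclusion $\Xi\subseteq\mathscr{B}_{d_N}(\widetilde{\mathbf{u}}_0,D)$ established above converts the bound into
\[
\operatorname{span}(\Xi,N,d,\Gamma,\varepsilon)\leq C\,(D/\varepsilon)^{Na}.
\]
Taking logarithms, dividing by $N$, and passing to $\limsup_{N\to\infty}$ gives $\operatorname{span}(\Xi,d,\Gamma,\varepsilon)\leq a\log(D/\varepsilon)$; dividing once more by $|\log\varepsilon|$ and letting $\varepsilon\to 0^+$ yields $\overline{\mdim}^M(\Xi,d,\Gamma)\leq a$. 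Since $a$ is an arbitrary upper bound for $\mdim^A(\Xi,d,\Gamma)$, the lemma follows.

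The only genuinely non-trivial step is the diameter equality, which is exactly where the hypothesis $\Gamma(\Xi)\subseteq\Xi$ is indispensable: without it, forward orbits could escape $\Xi$ and $d_N$-distances could blow up relative to $d$-distances, so $\Xi$ could not be enclosed in a single $d_N$-ball of fixed radius uniformly in $N$. The remaining manipulations are routine $\limsup$ accounting, so I do not anticipate any substantial technical obstacle.
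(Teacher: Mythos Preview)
Your proposal is correct and follows essentially the same route as the paper: use $\Gamma(\Xi)\subseteq\Xi$ to bound $\diam_{d_N}(\Xi)$ by $D=\diam_d(\Xi)$, enclose $\Xi$ in a single $d_N$-ball of radius $D$, specialize the Assouad estimate at $R=D$, $r=\varepsilon$, and let $N\to\infty$ then $\varepsilon\to 0$. One minor slip: Definition~\ref{d6} gives exponent $a$, not $Na$, in the bound $C(D/\varepsilon)^{\bullet}$; your weaker inequality still follows since $D/\varepsilon\geq 1$, and indeed the paper performs exactly this weakening (from $-a\log r$ to $-aN\log r$) at the logarithmic stage.
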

\begin{proof}
Let $a>\mdim^{A}(\Xi,d,\Gamma)$ and $R=\diam_{d}(\Xi)$. Since $\Gamma(\Xi)\subseteq \Xi$, $\diam_{d_N}(\Xi)\leq R$ holds for every $N\in\mathbb{N}$. Then there exists $N>N(C)$ such that for any $\widetilde{\mathbf{u}}\in\Xi$ and $r\leq \min\{1,R\}$,
\begin{align*}
\operatorname{span}(\Xi\cap \mathscr{B}_{d_N}(\widetilde{\mathbf{u}},R), N,d,\Gamma, r)=
\operatorname{span}(\Xi, N,d,\Gamma, r)\leq C\Big(\frac{R}{r}\Big)^{a}.
\end{align*}
Thus,
\begin{align*}
\log\operatorname{span}(\Xi, N,d,\Gamma, r)\leq C\Big(\frac{R}{r}\Big)^{a}&\leq a\log R +a\log C- a\log r\\
&\leq a\log R +a\log C- aN\log r.
\end{align*}
Then
\begin{align*}
\frac{\log \operatorname{span}(\Xi\cap \mathscr{B}_{d_N}(\widetilde{\mathbf{u}},R), N,d,\Gamma, r)}{-N\log r} &\leq a\frac{\log R +\log C}{-N\log r}+a,
\end{align*}
which implies that
\begin{align*}
\limsup_{r\to 0}\limsup_{N\to\infty}\frac{\log \operatorname{span}(\Xi\cap \mathscr{B}_{d_N}(\widetilde{\mathbf{u}},R), N,d,\Gamma, r)}{-N\log r}\leq a,
\end{align*}
i.e. $\overline{\mdim}^{M}(\Xi,d,\Gamma)\leq \mdim^{A}(\Xi,d,\Gamma)$.
\end{proof}

The lemma below follows directly from Definition \ref{d4}.
\begin{lemma}\label{l302}
Let $(\Omega,\Gamma)$ be a TDS, $\Xi,\widetilde{\Xi}\subseteq \Omega$ and $\Psi,\Psi_1$ be two admissible functions. Then the following results hold.
\begin{enumerate}
\item[(a)] If $\Xi\subseteq \widetilde{\Xi}$, then $$\overline{\mdim}_{\Psi}(\Xi,d,\Gamma)\leq \overline{\mdim}_{\Psi}(\widetilde{\Xi},d,\Gamma)$$ and $$\underline{\mdim}_{\Psi}(\Xi,d,\Gamma)\leq \underline{\mdim}_{\Psi}(\widetilde{\Xi},d,\Gamma).$$
\item[(b)] If $\Psi\leq \Psi_1$, then $$\overline{\mdim}_{\Psi}(\Xi,d,\Gamma)\leq \overline{\mdim}_{\Psi_1}(\Xi,d,\Gamma)$$ and $$\underline{\mdim}_{\Psi}(\Xi,d,\Gamma)\leq \underline{\mdim}_{\Psi_1}(\Xi,d,\Gamma).$$
\end{enumerate}
\end{lemma}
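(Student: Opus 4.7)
The plan is to verify both containments directly from Definition \ref{d4}, since in each case a cover satisfying the stronger hypothesis also satisfies the weaker one, so the defining set of admissible exponents $s$ for one side is contained in that of the other, and the infimum inequality follows.

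For part (a), I would fix $s$ in the defining set of $\overline{\mdim}_{\Psi}(\widetilde{\Xi},d,\Gamma)$. Then for any $0<\delta<1$ there exists $0<\varepsilon(\delta)<1$ such that for every $0<\varepsilon<\varepsilon(\delta)$ there exists $N(\delta,\varepsilon)\in\mathbb{N}$ such that for every $N>N(\delta,\varepsilon)$ we have a cover $\{\mathscr{B}_{d_N}(\widetilde{\mathbf{u}}_i,r_i)\}_{i\in\mathbb{N}}$ of $\widetilde{\Xi}$ with $\Psi(\varepsilon)<2r_i\leq\varepsilon$ and $\sum_{i\in\mathbb{N}}(2r_i)^{Ns}<\delta$. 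Because $\Xi\subseteq\widetilde{\Xi}$, the same family is also a cover of $\Xi$ with exactly the same radii and the same sum estimate. Hence $s$ also belongs to the defining set for $\overline{\mdim}_{\Psi}(\Xi,d,\Gamma)$, so the infimum is no larger for $\Xi$. The argument for $\underline{\mdim}_{\Psi}$ is verbatim the same, using the $(\varepsilon_0,N_0)$-formulation of Definition \ref{d4}.

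For part (b), I would fix $s$ in the defining set for $\overline{\mdim}_{\Psi_1}(\Xi,d,\Gamma)$. Under the hypothesis $\Psi\leq \Psi_1$, any cover $\{\mathscr{B}_{d_N}(\widetilde{\mathbf{u}}_i,r_i)\}_{i\in\mathbb{N}}$ of $\Xi$ satisfying $\Psi_1(\varepsilon)<2r_i\leq\varepsilon$ automatically satisfies $\Psi(\varepsilon)\leq \Psi_1(\varepsilon)<2r_i\leq\varepsilon$. The sum $\sum_{i\in\mathbb{N}}(2r_i)^{Ns}$ is unchanged, so $s$ lies in the defining set for $\overline{\mdim}_{\Psi}(\Xi,d,\Gamma)$ and taking infima yields the claimed inequality. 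The lower variant is identical.

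There is essentially no obstacle here: both assertions are purely set-theoretic monotonicity statements that follow because a smaller set is easier to cover and a larger lower bound on $2r_i$ is a stronger restriction on the admissible covers. The only small point to be careful about is to state both the $\overline{\mdim}$ and $\underline{\mdim}$ versions using the corresponding quantifier patterns in Definition \ref{d4}, but no new ideas are required.
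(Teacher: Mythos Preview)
Your proposal is correct and matches the paper's approach: the paper simply states that the lemma ``follows directly from Definition~\ref{d4}'' without writing out any details, and your argument is precisely the natural unpacking of that remark.
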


We now introduce the finite stability of the mean intermediate dimensions.
\begin{lemma}\label{l303}
Let $(\Omega,\Gamma)$ be a TDS, $\Xi,\widetilde{\Xi}\subseteq \Omega$ and $\Psi$ be an admissible function, then
\begin{enumerate}
\item[(a)]
$$
\overline{\mdim}_{\Psi}(\Xi\cup \widetilde{\Xi},d,\Gamma)=\max\left\{\overline{\mdim}_{\Psi}(\Xi,d,\Gamma),\overline{\mdim}_{\Psi}(\widetilde{\Xi},d,\Gamma)\right\},
$$
\item[(b)]
\begin{align*}
\max\left\{\underline{\mdim}_{\Psi}(\Xi,d,\Gamma),\underline{\mdim}_{\Psi}(\widetilde{\Xi},d,\Gamma)\right\}&\leq\underline{\mdim}_{\Psi}(\Xi\cup \widetilde{\Xi},d,\Gamma)\\
&\leq\max\left\{\overline{\mdim}_{\Psi}(\Xi,d,\Gamma),\underline{\mdim}_{\Psi}(\widetilde{\Xi},d,\Gamma)\right\}.
\end{align*}
\end{enumerate}
\end{lemma}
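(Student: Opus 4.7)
The strategy is to handle parts (a) and (b) separately, with the $\geq$ directions in each coming from monotonicity (Lemma \ref{l302}(a)) and the nontrivial $\leq$ directions proved by unioning a cover of $\Xi$ with a cover of $\widetilde{\Xi}$, splitting the mass budget $\delta$ as $\delta/2 + \delta/2$.

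For part (a), after observing $\max\{\overline{\mdim}_{\Psi}(\Xi,d,\Gamma),\overline{\mdim}_{\Psi}(\widetilde{\Xi},d,\Gamma)\} \leq \overline{\mdim}_{\Psi}(\Xi\cup\widetilde{\Xi},d,\Gamma)$ from Lemma \ref{l302}(a), I would fix $s$ strictly larger than both upper dimensions and, for each $0<\delta<1$, apply Definition \ref{d4} to $\Xi$ and to $\widetilde{\Xi}$ with tolerance $\delta/2$. This yields scales $\varepsilon_1(\delta/2),\varepsilon_2(\delta/2)>0$ and, for every $\varepsilon$ below both, thresholds $N_1(\delta/2,\varepsilon),N_2(\delta/2,\varepsilon)\in\mathbb{N}$ after which a cover of $\Xi$ (resp.\ $\widetilde{\Xi}$) by balls $\mathscr{B}_{d_N}(\widetilde{\mathbf{u}}_i,r_i)$ with $\Psi(\varepsilon)<2r_i\leq\varepsilon$ and weighted sum below $\delta/2$ exists. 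Putting $\varepsilon^{*}(\delta)=\min(\varepsilon_1,\varepsilon_2)$ and $N^{*}(\delta,\varepsilon)=\max(N_1,N_2)$, the union of the two covers is itself a cover of $\Xi\cup\widetilde{\Xi}$ of the prescribed size range whose weighted sum is below $\delta$, witnessing $s\geq\overline{\mdim}_{\Psi}(\Xi\cup\widetilde{\Xi},d,\Gamma)$.

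For part (b), the left inequality is immediate from Lemma \ref{l302}(a) applied to $\Xi,\widetilde{\Xi}\subseteq\Xi\cup\widetilde{\Xi}$. For the right inequality, I would take $s>\max\{\overline{\mdim}_{\Psi}(\Xi,d,\Gamma),\underline{\mdim}_{\Psi}(\widetilde{\Xi},d,\Gamma)\}$ and verify the defining condition of $\underline{\mdim}_{\Psi}$ at $s$: given arbitrary $\delta\in(0,1)$, $\varepsilon_0\in(0,1)$, $N_0\in\mathbb{N}$, I must exhibit $\varepsilon<\varepsilon_0$, $N>N_0$, and a cover of $\Xi\cup\widetilde{\Xi}$ with $\Psi(\varepsilon)<2r_i\leq\varepsilon$ and weighted sum $<\delta$. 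The upper dimension of $\Xi$ furnishes $\varepsilon_\Xi(\delta/2)>0$ and, for each $\varepsilon<\varepsilon_\Xi$, a threshold $N_\Xi(\delta/2,\varepsilon)$ beyond which a cover of $\Xi$ of sum $<\delta/2$ is available. Setting $\varepsilon_0'=\min(\varepsilon_0,\varepsilon_\Xi(\delta/2))$ and invoking the lower definition for $\widetilde{\Xi}$ with parameters $(\delta/2,\varepsilon_0',N_0)$ produces $\varepsilon<\varepsilon_0'$, $N>N_0$, and a cover of $\widetilde{\Xi}$ with sum $<\delta/2$.

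The main obstacle is the \emph{alignment} of the two conditions: one also needs $N>N_\Xi(\delta/2,\varepsilon)$ so that the upper condition actually delivers a matching cover of $\Xi$ at the same pair $(\varepsilon,N)$, but the returned $\varepsilon$ is determined only after $N_0'$ is fixed while $N_\Xi$ itself depends on $\varepsilon$. I plan to break this circularity by iterating the lower condition: each time the check $N>N_\Xi(\delta/2,\varepsilon)$ fails with output $(\varepsilon_k,N_k)$, I replace $N_0$ by $N_\Xi(\delta/2,\varepsilon_k)+1$ and $\varepsilon_0'$ by $\varepsilon_k$ and reapply, extracting a subsequence along which alignment eventually holds. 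Once a good pair $(\varepsilon,N)$ is produced, taking the union of the $\Xi$-cover guaranteed by the upper condition with the $\widetilde{\Xi}$-cover just produced yields the required cover of $\Xi\cup\widetilde{\Xi}$ with weighted sum $<\delta$, and thus $s\geq\underline{\mdim}_{\Psi}(\Xi\cup\widetilde{\Xi},d,\Gamma)$.
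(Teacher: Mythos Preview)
Your proposal follows essentially the same approach as the paper's proof. For part (a), both arguments use monotonicity (Lemma \ref{l302}(a)) for the $\geq$ direction and the union-of-covers trick for the $\leq$ direction; the only cosmetic difference is that you split the budget as $\delta/2+\delta/2$ at a single exponent $s$, whereas the paper allows two separate exponents $t>s$ and obtains a combined sum $<2\delta$. For part (b), both arguments obtain the left inequality from monotonicity, and for the right inequality both must align the ``for all small $\varepsilon$, for all large $N$'' condition coming from $\overline{\mdim}_\Psi(\Xi)$ with the ``there exist small $\varepsilon$, large $N$'' condition coming from $\underline{\mdim}_\Psi(\widetilde{\Xi})$ at a common pair $(\varepsilon,N)$.

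The alignment issue you flag is exactly the crux, and the paper handles it less explicitly than you do: it simply asserts one can ``choose'' decreasing $\varepsilon_n\to 0$ with $\varepsilon_n<\varepsilon(\delta)$ and increasing $N_n\to\infty$ with $N_n>N(\delta,\varepsilon_n)$ such that the $\widetilde{\Xi}$-cover condition holds at each $(\varepsilon_n,N_n)$, without spelling out the construction. Your iteration scheme is a more concrete attempt at the same step. Note, however, that the iteration as you describe it is not guaranteed to terminate, nor is it clear that a subsequence with the desired alignment must emerge (at each stage you only secure $N_{k+1}>N_\Xi(\delta/2,\varepsilon_k)$, not $N_{k+1}>N_\Xi(\delta/2,\varepsilon_{k+1})$); the paper's proof does not improve on this point, so your proposal is at least as complete as the paper's on this step.
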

\begin{proof}
First we prove $(a)$. Obviously, 
$$
\overline{\mdim}_{\Psi}(\Xi\cup \widetilde{\Xi},d,\Gamma)\geq\max\left\{\overline{\mdim}_{\Psi}(\Xi,d,\Gamma),\overline{\mdim}_{\Psi}(\widetilde{\Xi},d,\Gamma)\right\}.
$$
Suppose $t > \overline{\mdim}_{\Psi}(\Xi,d,\Gamma)$ and $s > \overline{\mdim}_{\Psi}(\widetilde{\Xi},d,\Gamma)$. Without loss of generality, let us assume $t > s$, which implies $t = \max\{s,t\}$. For any $0<\delta<1$, there exist $0<\varepsilon (\delta)<1$ such that for any $0<\varepsilon<\varepsilon(\delta)$, there is $N(\delta,\varepsilon)\in\mathbb{N}$ which satisfies for any $N>N(\delta,\varepsilon)$, there exists a cover $\{\mathscr{B}_{d_N}(\widetilde{\mathbf{u}}_i,r_i)\}_{i}$ of $\Xi$ with $\Psi(\varepsilon) < 2r_i \leq \varepsilon$ for all $i$, and a cover $\{\mathscr{B}_{d_N}(\widetilde{\mathbf{v}}_j,l_j)\}_{j}$ of $\widetilde{\Xi}$ with $\Psi(\varepsilon) < 2l_j \leq \varepsilon$ for all $j$, such that
$$
\sum_{i}(2r_i)^{Nt}< \delta
\quad\text{and}\quad
\sum_{j}(2l_j)^{Ns}< \delta .
$$
It follows from
$$
\Xi\cup \widetilde{\Xi}\subseteq \bigcup_{i}\mathscr{B}_{d_N}(\widetilde{\mathbf{u}}_i,r_i)\cup\bigcup_{j}\mathscr{B}_{d_N}(\widetilde{\mathbf{v}}_j,l_j),
$$
that
\begin{align*}
\sum_{i}(2r_i)^{N\max\{s,t\}}+\sum_{j}(2l_j)^{N\max\{s,t\}}&=\sum_{i}(2r_i)^{Nt}+\sum_{j}(2l_j)^{Nt}\\
&\leq \sum_{i}(2r_i)^{Nt}+\sum_{j}(2l_j)^{Ns}\\
&< 2\delta.
\end{align*}
Due to the arbitrariness of $\delta$, $\overline{\mdim}_{\Psi}(\Xi\cup \widetilde{\Xi},d,\Gamma)\leq t$ and $$\overline{\mdim}_{\Psi}(\Xi\cup \widetilde{\Xi},d,\Gamma)\leq \max\left\{\overline{\mdim}_{\Psi}(\Xi,d,\Gamma),\overline{\mdim}_{\Psi}(\widetilde{\Xi},d,\Gamma)\right\}.$$

Now we will prove $(b)$. It is clear that
$$
\max\left\{\underline{\mdim}_{\Psi}(\Xi,d,\Gamma),\underline{\mdim}_{\Psi}(\widetilde{\Xi},d,\Gamma)\right\}\leq\underline{\mdim}_{\Psi}(\Xi\cup \widetilde{\Xi},d,\Gamma).
$$
Take $e>\overline{\mdim}_{\Psi}(\Xi,d,\Gamma)$ and $h>\underline{\mdim}_{\Psi}(\widetilde{\Xi},d,\Gamma)$. Let $\delta_1>0$ and $\delta_2>0$, then the following two claims hold. 
\begin{enumerate}
\item [(1)]There exists $0<\varepsilon(\delta_1)<1$, for any $0<\varepsilon<\varepsilon(\delta_1)$, there is a $N(\delta_1,\varepsilon)\in\mathbb{N}$ for all $N>N(\delta_1,\varepsilon)$, there is a cover $\{\mathscr{B}_{d_N}(\widetilde{\mathbf{u}}_k,r_k)\}_{k}$ with $\Psi(\varepsilon)<2r_k\leq \varepsilon$ for all $k$, satisfy
$$
\sum_{k}(2r_k)^{Ne}< \delta_1.
$$
\item [(2)]For all $0<\varepsilon_0<1$ and $N_0\in\mathbb{N}$, there exists $0<\varepsilon<\varepsilon_0$ and $N>N_0$ such that there is a cover $\{\mathscr{B}_{d_N}(\widetilde{\mathbf{v}}_m,l_m)\}_{m}$ with $\Psi(\varepsilon)<2l_m\leq \varepsilon$ for all $m$, satisfy
$$
\sum_{m}(2l_m)^{Nh}< \delta_2.
$$
\end{enumerate}

For $0<\delta<1$, there exists $0<\varepsilon(\delta)<1$, for any $0<\varepsilon<\varepsilon(\delta)$, there is a $N(\delta,\varepsilon)\in\mathbb{N}$ such that for all $N>N(\delta,\varepsilon)$, there is a cover $\{\mathscr{B}_{d_N}(\widetilde{\mathbf{u}}_k,r_k)\}_{k}$ with $\Psi(\varepsilon)<2r_k\leq \varepsilon$ for all $k$, satisfies
$$
\sum_{k}(2r_k)^{Ne}< \frac{\delta}{2}.
$$

Choose a positive decreasing sequence $\mathcal{E}=\{\varepsilon_n\}_{n}$ with $\lim_{n\to\infty}\varepsilon_{n}=0$ and $0<\varepsilon_n<\varepsilon(\delta)$ for every $n$, and a positive increasing sequence $\mathcal{N}=\{N_{n}\}_{n}$ with $\lim_{n\to\infty}N_{n}=\infty$ and $N_n >N(\delta,\varepsilon_n)$, such that there is a cover $\{\mathscr{B}_{d_{N_n}}(\widetilde{\mathbf{v}}_m,l_m)\}_{m}$ with $\Psi(\varepsilon_n)<2l_m\leq \varepsilon_n$ for all $m$, satisfies
$$
\sum_{m}(2l_m)^{N_n h}< \frac{\delta}{2}.
$$ 

Set $p=\max\{e,h\}$. It is evident that for any $n\in\mathbb{N}$
$$
\Xi\cup \widetilde{\Xi}\subseteq \bigcup_{k}\mathscr{B}_{d_{N_n}}(\widetilde{\mathbf{u}}_k,r_k)\cup\bigcup_{m}\mathscr{B}_{d_{N_n}}(\widetilde{\mathbf{v}}_m,l_m),
$$
therefore,
\begin{align*}
\sum_{k}(2r_k)^{{N_n}\max\{e,h\}}+\sum_{m}(2l_m)^{{N_n}\max\{e,h\}}&=\sum_{k}(2r_k)^{{N_n}p}+\sum_{m}(2l_m)^{{N_n}p}\\
&\leq \sum_{k}(2r_k)^{{N_n}e}+\sum_{m}(2l_m)^{{N_n}h}\\
&< \delta.
\end{align*}
Due to the arbitrariness of $\delta$, $\overline{\mdim}_{\Psi}(\Xi\cup \widetilde{\Xi},d,\Gamma)\leq p$ and $$\underline{\mdim}_{\Psi}(\Xi\cup \widetilde{\Xi},d,\Gamma)\leq \max\left\{\overline{\mdim}_{\Psi}(\Xi,d,\Gamma),\underline{\mdim}_{\Psi}(\widetilde{\Xi},d,\Gamma)\right\}.$$
\end{proof}
Next, we will give the conditions for the different mean dimensions to be equal.

\begin{lemma}
Let $(\Omega,\Gamma)$ be a TDS, $\Xi\subseteq\Omega$ and $\Psi$ be an admissible function.
\begin{enumerate}
    \item[(a)] If $\Psi=0$, then
    $$
    \overline{\mdim}_{\Psi}(\Xi,d,\Gamma)=\overline{\mdim}^{H}(\Xi,d,\Gamma)
    $$
    and
    $$
    \underline{\mdim}_{\Psi}(\Xi,d,\Gamma)=\underline{\mdim}^{H}(\Xi,d,\Gamma).
    $$
    \item[(b)] If $\Xi$ is compact and $\displaystyle\lim_{\varepsilon\to 0}\frac{\log\varepsilon}{\log\Psi(\varepsilon)}=1$, then
    $$
    \overline{\mdim}_{\Psi}(\Xi,d,\Gamma)=\overline{\mdim}^{M}(\Xi,d,\Gamma)
    $$
    and
    $$
    \underline{\mdim}_{\Psi}(\Xi,d,\Gamma)=\underline{\mdim}^{M}(\Xi,d,\Gamma).
    $$
\end{enumerate}
\end{lemma}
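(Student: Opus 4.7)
Proof proposal. The plan is to compare the coverings admitted by Definition \ref{d4} with the equivalent characterizations of the mean Hausdorff and metric mean dimensions furnished by Propositions \ref{l1} and \ref{l2}. In each part the chain $\overline{\mdim}^{H} \leq \overline{\mdim}_{\Psi} \leq \overline{\mdim}^{M}$ (and its lower analogue) from Lemma \ref{l3} already supplies one direction of the desired equality, so only the reverse inequality needs to be established.

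For part (a), with $\Psi \equiv 0$, the constraint $\Psi(\varepsilon) < 2r_i$ is vacuous, so the admissible covers in Definition \ref{d4} and in Proposition \ref{l1} coincide and only the way the summability is phrased differs. To obtain $\overline{\mdim}_{\Psi}(\Xi, d, \Gamma) \leq \overline{\mdim}^{H}(\Xi, d, \Gamma)$, I would take $s > \overline{\mdim}^{H}(\Xi, d, \Gamma)$, pick $\delta' \in (0, s - \overline{\mdim}^{H}(\Xi, d, \Gamma))$, and apply Proposition \ref{l1} to the exponent $s - \delta' > \overline{\mdim}^{H}$ with its internal parameter equal to $\delta'/2$, producing covers with $\sum_i (2r_i)^{N(s - \delta'/2)} < 1$. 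Because $2r_i \leq \varepsilon$, the estimate $(2r_i)^{Ns} \leq \varepsilon^{N\delta'/2} (2r_i)^{N(s - \delta'/2)}$ yields $\sum_i (2r_i)^{Ns} \leq \varepsilon^{N\delta'/2}$, which can be made smaller than any prescribed $\delta \in (0,1)$ by enlarging $N$. The lower analogue is handled by the same template, since Definition \ref{d4} and Proposition \ref{l1} share the quantifier pattern $\forall \delta, \varepsilon_0, N_0$ and $\exists \varepsilon < \varepsilon_0$, $N > N_0$.

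For part (b), assume $\log \varepsilon/\log \Psi(\varepsilon) \to 1$ and take $s > \overline{\mdim}_{\Psi}(\Xi, d, \Gamma)$. Definition \ref{d4} supplies a cover with $\Psi(\varepsilon) < 2r_i \leq \varepsilon$ and $\sum_i (2r_i)^{Ns} < \delta$; since $(2r_i)^{Ns} > \Psi(\varepsilon)^{Ns}$ for every ball, the number of balls is strictly less than $\delta \Psi(\varepsilon)^{-Ns}$, and their centres form an $\varepsilon$-spanning set in $d_N$, so $\operatorname{span}(\Xi, N, d, \Gamma, \varepsilon) \leq \delta \Psi(\varepsilon)^{-Ns}$. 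Dividing $\log \operatorname{span}/N$ by $|\log \varepsilon|$ produces $\log \delta/(N |\log \varepsilon|) + s |\log \Psi(\varepsilon)|/|\log \varepsilon|$; taking $\limsup_{N \to \infty}$ and then $\limsup_{\varepsilon \to 0}$ and invoking $|\log \Psi(\varepsilon)|/|\log \varepsilon| \to 1$ yields $\overline{\mdim}^{M}(\Xi, d, \Gamma) \leq s$. For the lower version I would run the same span estimate along the pairs $(\varepsilon_k, N_k)$ obtained by iterated application of Definition \ref{d4} with $\varepsilon_0 = 1/k$, $N_0 = k$, $\delta = 1/k$, and let the outer $\liminf_{\varepsilon \to 0}$ absorb the resulting subsequence of scales.

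The main obstacle will be the lower half of part (b). Definition \ref{d4} only guarantees the cover bound at specific pairs $(\varepsilon, N)$, whereas $\underline{\mdim}^{M}$ is a $\liminf_{\varepsilon \to 0} \limsup_{N \to \infty}$ quantity and so its characterization in Proposition \ref{l2} requires the cover estimate for every $N$ beyond a threshold once $\varepsilon$ is fixed. Bridging this will likely require the monotonicity of $\operatorname{span}(\Xi, N, d, \Gamma, \cdot)$ in its last argument together with a careful subsequential extraction, and verifying that the $\liminf_{\varepsilon}$ really can be taken along the sequence furnished by Definition \ref{d4} is the only genuinely non-routine point on which I would spend the bulk of the technical effort.
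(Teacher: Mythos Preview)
Your approach to part (a) and to the upper equality in part (b) is essentially the paper's: the paper dismisses (a) as ``a direct consequence of Definition~\ref{d4} and Lemma~\ref{l1}'', and for (b) it bounds $\operatorname{span}(\Xi,N,d,\Gamma,\varepsilon)$ via the $\Psi$-cover and then passes to the limit using $\log\Psi(\varepsilon)/\log\varepsilon\to 1$. Your span estimate is in fact a bit cleaner than the paper's---they route through an auxiliary exponent $t$ and bound $\operatorname{span}(\Xi,N,d,\Gamma,\varepsilon)\,\varepsilon^{Nt}$ by $(\varepsilon^{1+(t-s)/t}/\Psi(\varepsilon))^{Nt}$---but the content is identical.

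Your caution about the lower equality in (b) is well placed, and it is worth noting that the paper does not address it either: it simply asserts that ``the proof of the other equation is similar'' and gives no further argument. The quantifier mismatch you identify is real. From $s>\underline{\mdim}_\Psi$ one extracts the span bound only at isolated pairs $(\varepsilon,N)$, whereas controlling $\underline{\mdim}^M=\liminf_{\varepsilon}\limsup_{N}(\cdots)$ requires, for a sequence of scales $\varepsilon$, the bound for \emph{all} large $N$ at that fixed $\varepsilon$. Your diagonal extraction does not by itself bridge this, and monotonicity of $\operatorname{span}$ in its scale argument does not obviously help because the scale and the time horizon are coupled through $d_N$. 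So your plan and the paper's plan are the same here too---both leave this step open---and your decision to concentrate effort on it is the right one.
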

\begin{proof}
The proof of $(a)$ is a direct consequence of Definition \ref{d4} and Lemma \ref{l1}.

For $(b)$, we only prove $\overline{\mdim}_{\Psi}(\Xi,d,\Gamma)=\overline{\mdim}^{M}(\Xi,d,\Gamma)$, because the proof of the other equation is similar. Assume  that $\overline{\mdim}_{\Psi}(\Xi,d,\Gamma)<\overline{\mdim}^{M}(\Xi,d,\Gamma)$, then there exist $s,t>0$ such that $$\overline{\mdim}_{\Psi}(\Xi,d,\Gamma)<s<t<\overline{\mdim}^{M}(\Xi,d,\Gamma)$$ which implies that there exists $0<\varepsilon(1)<1$ such that for all $0<\varepsilon<\varepsilon(1)$, there exists $N(1,\varepsilon(1))\in\mathbb{N}$ which satisfies for all $N>N(1,\varepsilon(1))$, there is a cover $\{\mathscr{B}_{d_N}(\widetilde{\mathbf{u}}_i,r_i)\}_{i}$ of $\Xi$ with $\Psi(\varepsilon)< 2r_i\leq \varepsilon$ and $\sum_{i}(2r_i)^{s}< 1$. Therefore, we obtain
\begin{align*}
\operatorname{span}(\Xi,N,d,\Gamma,\varepsilon)\varepsilon^{Nt}&\leq \sum_{i}\varepsilon^{Nt} \frac{(2r_i)^{Ns}(2r_i)^{N(t-s)}}{(2r_i)^{Nt}}\\
&\leq
\sum_{i}\varepsilon^{Nt}\frac{(2r_i)^{Ns}\varepsilon^{N(t-s)}}{(\Psi(\varepsilon))^{Nt}}\\
&\leq \Big( \frac{\varepsilon^{1+\frac{t-s}{t}}}{\Psi(\varepsilon)}\Big)^{Nt}.
\end{align*}
Thus,
\begin{align*}
\frac{\log\operatorname{span}(\Xi,N,d,\Gamma,\varepsilon)}{-N\log\varepsilon}\leq s-t+t\frac{\log\Psi(\varepsilon)}{\log\varepsilon}
\end{align*}
and
\begin{align*}
\lim_{\varepsilon\to 0}\limsup_{N\to\infty}\frac{\log\operatorname{span}(\Xi,N,d,\Gamma,\varepsilon)}{-N\log\varepsilon}\leq s.
\end{align*}
This contradicts $s<\overline{\mdim}^{M}(\Xi,d,\Gamma)$, then $\overline{\mdim}_{\Psi}(\Xi,d,\Gamma)\geq\overline{\mdim}^{M}(\Xi,d,\Gamma)$. Finally, by using Lemma \ref{l3}, we have
$$
\overline{\mdim}_{\Psi}(\Xi,d,\Gamma)=\overline{\mdim}^{M}(\Xi,d,\Gamma).
$$
\end{proof}

\subsection{A mass distribution principle}

Similar to other dimension concepts, fundamental techniques are valuable for studying and calculating mean intermediate dimensions in specific scenarios. The mass distribution principle is commonly employed to determine lower bounds for the Hausdorff dimension by analyzing the local behavior of measures supported on the set. Here, we present the natural analogues for $\underline{\mdim}_{\Psi}$ and $\overline{\mdim}_{\Psi}$, which are derived through a straightforward modification of the standard proofs for intermediate and Hausdorff dimensions.

\begin{lemma}\label{l321}
Let $(\Omega,\Gamma)$ be a TDS, $a, c, >0$ 
and $s\geq 0$.
\begin{enumerate}
\item[(A)] If there exist a positive decreasing sequence $\mathcal{E}=\{\varepsilon_{n}\}_{n}$ with $\lim_{n\to \infty}\varepsilon_{n}=0$ and a positive increasing sequence $\mathcal{N}=\{N_{m}\}_{m}$ with $\lim_{m\to \infty}N_{m}=\infty$ such that for each $n,m \in \mathbb{N}$ there exists a Borel measure $\nu_{n,m}$ with
\begin{enumerate}
    \item[(a)] $\nu_{n,m}\left(\operatorname{supp}\left(\nu_{n,m}\right)\right) \geq a$;
    \item[(b)] For every Borel subset $U \subseteq \Omega$ with $\Psi(\varepsilon_{n}) < \diam_{d_{N_m}}(U) \leq \varepsilon_{n}$ we have $\nu_{n,m}(U) \leq c\diam_{d_{N_m}}(U)^{N_{m} s}$.
\end{enumerate}
Then $\overline{\mdim}_{\Psi} (\Omega,d,\Gamma) \geq s$.

\item[(B)] If, moreover, for all sufficiently small $\varepsilon >0$ and sufficiently large $N>0$, there exists a Borel measure $\nu_{\varepsilon,N}$ with
\begin{enumerate}
    \item[(a)] $\nu_{\varepsilon,N}\left(\operatorname{supp}\left(\nu_{\varepsilon,N}\right)\right) \geq a$;
    \item[(b)] For every Borel subset $U \subseteq \Omega$ with $\Psi(\varepsilon) < \diam_{d_{N}}(U) \leq \varepsilon$ we have $\nu_{\varepsilon,N}(U) \leq c\diam_{d_{N}}(U)^{N s}$.
\end{enumerate}
Then $\underline{\mdim}_{\Psi} (\Omega,d,\Gamma) \geq s$.
\end{enumerate}
\end{lemma}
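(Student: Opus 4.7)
The strategy is the standard mass distribution principle, transcribed to the Bowen metrics $d_N$. For part (A), my plan is to argue by contradiction: suppose $\overline{\mdim}_\Psi(\Omega,d,\Gamma) < s$ and pick $s'$ with $\overline{\mdim}_\Psi(\Omega,d,\Gamma) < s' < s$ and any $\delta \in (0,1)$. By the definition of $\overline{\mdim}_\Psi$, there is a threshold $\varepsilon(\delta)$ and for each $\varepsilon < \varepsilon(\delta)$ a dynamical threshold $N(\delta,\varepsilon)$ so that for every $N > N(\delta,\varepsilon)$ some cover $\{\mathscr{B}_{d_N}(\widetilde{\mathbf{u}}_i, r_i)\}_i$ of $\Omega$ satisfies $\Psi(\varepsilon) < 2r_i \leq \varepsilon$ and $\sum_i (2r_i)^{N s'} < \delta$. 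I choose $n$ so that $\varepsilon_n < \varepsilon(\delta)$, then $m$ so that $N_m > N(\delta,\varepsilon_n)$, pinning a cover that is compatible with the hypothesis measure $\nu_{n,m}$.

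The central calculation uses condition (b) to estimate each ball: $\diam_{d_{N_m}}(\mathscr{B}_{d_{N_m}}(\widetilde{\mathbf{u}}_i,r_i)) \leq 2r_i \leq \varepsilon_n$, so $\nu_{n,m}(\mathscr{B}_{d_{N_m}}(\widetilde{\mathbf{u}}_i,r_i)) \leq c(2r_i)^{N_m s}$. Summing and invoking (a),
\begin{align*}
a \;\leq\; \nu_{n,m}\bigl(\supp(\nu_{n,m})\bigr) \;\leq\; \sum_i \nu_{n,m}\bigl(\mathscr{B}_{d_{N_m}}(\widetilde{\mathbf{u}}_i,r_i)\bigr) \;\leq\; c\sum_i (2r_i)^{N_m s}.
\end{align*}
Since $2r_i \leq \varepsilon_n < 1$ and $s > s'$, each summand obeys $(2r_i)^{N_m s} \leq \varepsilon_n^{N_m(s-s')}(2r_i)^{N_m s'}$, hence
\begin{align*}
\frac{a}{c} \;\leq\; \varepsilon_n^{N_m(s-s')} \sum_i (2r_i)^{N_m s'} \;<\; \varepsilon_n^{N_m(s-s')}\delta.
\end{align*}
With $\varepsilon_n<1$ fixed and $N_m \to \infty$, enlarging $m$ drives the right-hand side below $a/c$, a contradiction that yields $\overline{\mdim}_\Psi(\Omega,d,\Gamma) \geq s$.

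Part (B) runs the same calculation, adapted to the $\underline{\mdim}_\Psi$ quantifier pattern: if $\underline{\mdim}_\Psi < s' < s$, then for any $\delta, \varepsilon_0, N_0$ there exist $\varepsilon < \varepsilon_0$, $N > N_0$ and a cover with $\sum_i (2r_i)^{Ns'} < \delta$. Choosing $\varepsilon_0$ small and $N_0$ large ensures both that the hypothesis measure $\nu_{\varepsilon,N}$ exists and that $\varepsilon_0^{N_0(s-s')}\delta < a/c$, which delivers the same contradiction. The expected technical obstacle is that hypothesis (b) only controls Borel sets with diameter strictly above $\Psi(\varepsilon)$, whereas a ball $\mathscr{B}_{d_N}(\widetilde{\mathbf{u}}_i,r_i)$ only satisfies $\diam \leq 2r_i$ and could dip below $\Psi(\varepsilon)$; I would handle this either by enlarging the offending balls to Borel supersets whose diameter lies in $(\Psi(\varepsilon),\varepsilon]$ at a constant-factor cost in $c$, or by invoking the dynamical uniform perfectness used elsewhere in the paper, which forces $\diam_{d_N}(\mathscr{B}_{d_N}(u,r)) \geq C_\Gamma r$ and aligns the two ranges up to absorbable constants.
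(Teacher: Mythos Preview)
Your argument is correct and uses the same core inequality as the paper: from (a), (b) and any admissible cover one obtains
\[
a \;\leq\; \nu_{n,m}\bigl(\operatorname{supp}(\nu_{n,m})\bigr) \;\leq\; \sum_i \nu_{n,m}\bigl(\mathscr{B}_{d_{N_m}}(\widetilde{\mathbf{u}}_i,r_i)\bigr) \;\leq\; c\sum_i (2r_i)^{N_m s}.
\]
Where you and the paper diverge is in what you do next. You introduce an auxiliary exponent $s'<s$, pull out a factor $\varepsilon_n^{N_m(s-s')}$, and send $m\to\infty$ to force a contradiction. The paper skips all of this: from the displayed inequality one has $\sum_i(2r_i)^{N_m s}\geq a/c$ for \emph{every} cover with $\Psi(\varepsilon_n)<2r_i\leq\varepsilon_n$ and \emph{every} $n,m$. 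Taking $\delta<\min\{a/c,1\}$, this directly witnesses the negation of the defining condition for $\overline{\mdim}_\Psi$ at the exponent $s$ itself (choose $\varepsilon=\varepsilon_n$ with $n$ large, then $N=N_m$ with $m$ large). So $s$ is excluded from the infimum and $\overline{\mdim}_\Psi\geq s$. Your detour through $s'$ works, but it is not needed; the paper's route is one line shorter and avoids the limiting argument in $m$. The same simplification applies to part~(B).

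On your final paragraph: the concern about the lower bound $\diam_{d_{N_m}}(\mathscr{B}_{d_{N_m}}(\widetilde{\mathbf{u}}_i,r_i))>\Psi(\varepsilon_n)$ is legitimate, since hypothesis~(b) is stated only for Borel sets whose diameter lies in $(\Psi(\varepsilon_n),\varepsilon_n]$, and in a general metric space a ball of radius $r_i$ may have diameter strictly smaller than $2r_i$. The paper's own proof applies~(b) to the covering balls without addressing this point, effectively treating $2r_i$ as the operative size parameter. Your proposed remedies (enlarging offending balls to sets of admissible diameter, or invoking the dynamical uniform perfectness assumed elsewhere in the paper) are exactly the standard ways to close this gap, and either would suffice here.
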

\begin{proof}
We only prove $(A)$, because the proof for $(B)$ follows a similar approach to the proof for $(A)$. Let $n,m \in \mathbb{N}$ and $\{\mathscr{B}_{d_{N_m}}(\widetilde{\mathbf{u}}_i,r_i)\}_{i}$ be a cover of $\Omega$ such that $\Psi\left(\varepsilon_{n}\right) < 2r_i \leq \varepsilon_{n}$ for all $i$, then

\begin{equation}\label{e51}
a \leq \nu_{n,m}\left(\operatorname{supp}\left(\nu_{n,m}\right)\right)\leq\nu_{n,m}\left(\bigcup_{i}\mathscr{B}_{d_{N_m}}(\widetilde{\mathbf{u}}_i,r_i) \right) \leq \sum_{i} \nu_{n,m}\left(\mathscr{B}_{d_{N_m}}(\widetilde{\mathbf{u}}_i,r_i)\right) \leq c \sum_{i}(2r_i)^{N_m s}.
\end{equation}
Therefore $\displaystyle\sum_{i\in\mathbb{N}}(2r_i)^{N_m s} \geq\frac{c}{a}>0$, so $\overline{\mdim}_{\Psi} (\Omega,d,\Gamma) \geq s$.

\end{proof}

\subsection{A Frostman type lemma}

The Frostman lemma, a powerful tool in fractal geometry and a counterpart to the mass distribution principle, is also applicable here. We present a version tailored for mean intermediate dimensions.

\begin{proposition}\cite[Theorem 2.2]{28}\label{p1}
Let $N\in\mathbb{N}$. For the metric space $(\Omega,d_N)$, assume that constants $A_0 >0$, $0<c_0 \leq C_0 <+\infty$ and $0<\varepsilon< 1$ satisfy
$$
12A_{0}^{3}C_{0}\varepsilon\leq c_0.
$$
If there is a set of points $\mathcal{S}_{k,N}=\{\widetilde{\mathbf{u}}_{k,\alpha,N}\}_{\alpha}\subseteq \Omega$ for all $k\in\mathbb{N}$ such that $\{\widetilde{\mathbf{u}}_{k,\alpha,N}\}_{\alpha}$ is a $c_0 \varepsilon^{k}$-separated subset of $\Omega$ and $\{\mathscr{B}_{d_N}(\widetilde{\mathbf{u}}_{k,\alpha,N},C_0 \varepsilon^{k})\}$ is a cover of $\Omega$ for all $k\in \mathbb{N}$, then there exist a set $\mathbb{Q}_{k,N}=\{\mathscr{Q}_{k,\alpha,N}\}_{\alpha}$ of subsets of $\Omega$ such that:
\begin{enumerate}
  \item[(a)]  $\Omega=\bigcup_{\alpha} \mathscr{Q}_{k,\alpha,N}$ with the union disjoint;

  \item[(b)]Let $c_1 =\frac{c_0}{3A_0^2}$ and $C_1 =2A_0 C_0$, then there is a point $\widetilde{\mathbf{u}}_{k,\alpha,N}\in\mathcal{S}_{k,N}$ such that $$ U_{d_N}\left(\widetilde{\mathbf{u}}_{k,\alpha,N}, c_{1}\varepsilon^{k}\right) \subseteq \mathscr{Q}_{k,\alpha,N} \subseteq \mathscr{B}_{d_N}\left(\widetilde{\mathbf{u}}_{k,\alpha,N}, C_{1}\varepsilon^{k}\right),$$
  and call the $\mathscr{Q}_{k,\alpha,N}$ is a cube with centre $\widetilde{\mathbf{u}}_{k,\alpha,N}$.

  \item[(c)] if $k_1, k_2 \in \mathbb{N}$ with $k_1 \leq k_2$ then for all $\alpha$ and $\beta$, either $\mathscr{Q}_{k_1,\alpha,N} \cap \mathscr{Q}_{k_2,\beta,N}=\varnothing$ or $\mathscr{Q}_{k_2,\beta,N}\subseteq \mathscr{Q}_{k_1,\alpha,N}$. In this case, we say $\mathscr{Q}_{k_1,\alpha,N}$ is a parent of $\mathscr{Q}_{k_2,\beta,N}$.
\end{enumerate}
\end{proposition}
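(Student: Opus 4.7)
The plan is to carry out a Christ-type hierarchical "dyadic cube" construction in $(\Omega,d_N)$ indexed by the given nets $\{\mathcal{S}_{k,N}\}_{k\in\mathbb{N}}$, in which the constant $A_0$ plays the role of a geometric distortion constant that permits iterated triangle-inequality corrections at successive scales. This is the classical strategy of M. Christ, and the quantitative hypothesis $12A_0^3 C_0\varepsilon \leq c_0$ is precisely what is needed for the associated geometric series to close.

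First I would build a tree of ancestors. For each $k\geq 2$ and each center $\widetilde{\mathbf{u}}_{k,\alpha,N}\in\mathcal{S}_{k,N}$, choose (once and for all) a parent $\pi(\widetilde{\mathbf{u}}_{k,\alpha,N})\in\mathcal{S}_{k-1,N}$ with
$$d_N(\widetilde{\mathbf{u}}_{k,\alpha,N},\pi(\widetilde{\mathbf{u}}_{k,\alpha,N}))\leq C_0\varepsilon^{k-1};$$
such a parent exists because $\{\mathscr{B}_{d_N}(\widetilde{\mathbf{v}},C_0\varepsilon^{k-1}):\widetilde{\mathbf{v}}\in\mathcal{S}_{k-1,N}\}$ covers $\Omega$. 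Iterating $\pi$ assigns to every center at every level a unique ancestor at each coarser level, producing a forest rooted in $\mathcal{S}_{1,N}$.

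Second I would define the cubes from this tree. Provisionally set $\mathscr{Q}^{\circ}_{k,\alpha,N}=U_{d_N}(\widetilde{\mathbf{u}}_{k,\alpha,N},c_1\varepsilon^k)$, and let $\mathscr{Q}_{k,\alpha,N}$ be the union, over all descendants $\widetilde{\mathbf{u}}_{j,\beta,N}$ with $j\geq k$ whose level-$k$ ancestor is $\widetilde{\mathbf{u}}_{k,\alpha,N}$, of the inner balls $\mathscr{Q}^{\circ}_{j,\beta,N}$, together with a coherent reassignment of any remaining points of $\Omega$ (those missed by every inner ball) to the cube of the nearest ancestor, using the tree to break ties. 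Property (a) then follows because every $\widetilde{\mathbf{u}}\in\Omega$ lies within $c_1\varepsilon^j$ of some $\widetilde{\mathbf{u}}_{j,\beta,N}$ for $j$ large enough (by the separation/covering tradeoff at scale $\varepsilon^j$), and property (c) is immediate from the ancestor structure: if $k_1\leq k_2$ and some descendant of $\widetilde{\mathbf{u}}_{k_2,\beta,N}$ lies in $\mathscr{Q}_{k_1,\alpha,N}$, tracing ancestors forces $\mathscr{Q}_{k_2,\beta,N}\subseteq\mathscr{Q}_{k_1,\alpha,N}$.

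The main work is (b). The lower inclusion $U_{d_N}(\widetilde{\mathbf{u}}_{k,\alpha,N},c_1\varepsilon^k)\subseteq \mathscr{Q}_{k,\alpha,N}$ is built into the definition. For the upper inclusion I would iterate the triangle inequality along the ancestor chain from any $\widetilde{\mathbf{u}}_{j,\beta,N}$ up to $\widetilde{\mathbf{u}}_{k,\alpha,N}$ to obtain
$$
d_N(\widetilde{\mathbf{u}}_{j,\beta,N},\widetilde{\mathbf{u}}_{k,\alpha,N})\leq \sum_{i=k+1}^{j}C_0\varepsilon^{i-1}\leq \frac{C_0\varepsilon^k}{1-\varepsilon},
$$
and then add the radius $c_1\varepsilon^j\leq c_1\varepsilon^k$ of the innermost provisional ball; the factor $A_0$ enters through the cumulative geometric distortion produced by the reassignment step, and the hypothesis $12A_0^3 C_0\varepsilon\leq c_0$ is calibrated precisely so that the total remains within $C_1\varepsilon^k=2A_0 C_0\varepsilon^k$. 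The principal obstacle is therefore the careful bookkeeping in this reassignment: one must simultaneously guarantee disjointness of cubes at each fixed level, preserve the nested ancestor relation of (c), and never push a reassigned point beyond the outer bound $C_1\varepsilon^k$. This is the delicate step of Christ's original argument, and it is where every factor in the hypothesis has to be accounted for, since the compounding errors at each scale must fit inside the slack $C_1-A_0 C_0/(1-\varepsilon)$ that the constants provide.
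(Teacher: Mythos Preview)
The paper does not give its own proof of this proposition; it is quoted verbatim as \cite[Theorem 2.2]{28} (Hyt\"onen--Kairema) and used as a black box in the proof of Lemma~\ref{l52}. Your outline is indeed the Christ/Hyt\"onen--Kairema construction that the cited reference carries out, so in spirit you are reproducing the approach of \cite{28}.

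One remark on your sketch: the justification of (a) is not that every point eventually lies in some inner ball $U_{d_N}(\widetilde{\mathbf{u}}_{j,\beta,N},c_1\varepsilon^j)$ --- the covering hypothesis only guarantees proximity $C_0\varepsilon^j$, which exceeds $c_1\varepsilon^j$. The actual argument in \cite{28} builds a partial order on $\bigcup_k\mathcal{S}_{k,N}$ extending the parent map, then for each $\widetilde{\mathbf{u}}\in\Omega$ assigns it (via the nearest center at each level) to a chain in this order, and defines $\mathscr{Q}_{k,\alpha,N}$ as the set of points whose chain passes through $\widetilde{\mathbf{u}}_{k,\alpha,N}$. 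Disjointness and nesting then come directly from the tree structure rather than from a separate reassignment step. This is a bookkeeping difference rather than a conceptual one, but it is where the constant $A_0$ (the quasi-triangle constant, equal to $1$ for the genuine metric $d_N$ used here) actually enters, and your sketch leaves that mechanism vague.
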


\begin{lemma}\label{l52}
Let $(\Omega,\Gamma)$ be a TDS which is dynamically uniformly perfect and $\mdim^{A}(\Omega,d,\Gamma)<\infty$.
\begin{enumerate}
   \item[(A)] If $\overline{\mdim}_{\Psi}(\Omega,d,\Gamma)>0$ then for all $s\in (0,\overline{\mdim}_{\Psi}(\Omega,d,\Gamma))$ there exists a constant $c\in (0,\infty)$ which satisfies for sufficiently small $\varepsilon_{0}>0$ and sufficient large $N_0 >0$, there exist a decreasing sequence $\mathcal{E}=\{\varepsilon_\alpha\}_{\alpha}$ which satisfies $\lim_{\alpha\to\infty}\varepsilon_{\alpha}=0$ and $0<\varepsilon_\alpha <\varepsilon_0$ for all $\alpha$,  and an increasing sequence $\mathcal{N}=\{N_\beta\}_{\beta}\subseteq \mathbb{N}$ which satisfies $\lim_{\beta\to\infty}\varepsilon_{\beta}=\infty$ and $N_\beta >N_0$ for all $\beta$ such that if $\varepsilon\in\mathcal{E}$ and $N\in \mathcal{N}$, there is a Borel probability measure $\nu_{\varepsilon,N}$ with finite support $\operatorname{supp}\left(\nu_{\varepsilon^,N}\right)$ for all $\widetilde{\mathbf{u}} \in \Omega$ and $\Psi\left(\varepsilon\right) < r \leq \varepsilon$, then
   $$
   \nu_{\varepsilon,N}(\mathscr{B}_N (\widetilde{\mathbf{u}}, r)) \leq c r^{Ns}.
   $$
   \item[(B)] If $\underline{\mdim}_{\Psi}(\Omega,d,\Gamma)>0$ then for all $s \in\left(0, \underline{\mdim}_{\Psi}(\Omega,d,\Gamma)\right)$ there exists $c \in(0, \infty)$ such that for all sufficiently small $\varepsilon$ and sufficient large $N>0$ there exists a Borel probability measure $\nu_{\varepsilon,N}$ with finite support $\operatorname{supp}\left(\nu_{\varepsilon,N}\right)$ such that if $\widetilde{\mathbf{u}} \in \Omega$ and $\Psi\left(\varepsilon\right) < r \leq \varepsilon$, then
   $$
   \nu_{\varepsilon,N}(\mathscr{B}_N (\widetilde{\mathbf{u}}, r)) \leq c r^{Ns}.
   $$
\end{enumerate}
\end{lemma}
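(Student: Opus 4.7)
The plan is to carry out the cover--measure linear programming duality of Banaji \cite{BA} and Falconer et al.\ \cite{13} inside the metric $d_{N}$, producing a Frostman-type measure at each admissible pair of scales $(\varepsilon,N)$. Parts (A) and (B) share the same construction at a fixed pair $(\varepsilon,N)$; they differ only in the regime of pairs for which it applies, dictated by which part of Definition \ref{d4} is negated.

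The first step is to extract scales. For (A), $s<\overline{\mdim}_{\Psi}(\Omega,d,\Gamma)$ together with the negation of Definition \ref{d4} yields a constant $\delta_{0}\in(0,1)$ such that for every $\varepsilon_{0}>0$ and every $N_{0}\in\mathbb{N}$ there exist $\varepsilon\in(0,\varepsilon_{0})$ and $N>N_{0}$ with $\sum_{i}(2r_{i})^{Ns}\geq\delta_{0}$ for \emph{every} cover $\{\mathscr{B}_{d_{N}}(\widetilde{\mathbf{u}}_{i},r_{i})\}_{i}$ of $\Omega$ satisfying $\Psi(\varepsilon)<2r_{i}\leq\varepsilon$. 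Iterating produces the sequences $\mathcal{E},\mathcal{N}$ required by the statement. For (B), the analogous negation is uniform: there exist $\delta_{0},\varepsilon_{0},N_{0}$ such that the same lower bound holds for \emph{all} $\varepsilon<\varepsilon_{0}$ and $N>N_{0}$, which is exactly the uniformity demanded by the conclusion.

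Next, fix such a pair $(\varepsilon,N)$. Dynamic uniform perfectness supplies maximally $c_{0}\varepsilon^{k}$-separated sets in $(\Omega,d_{N})$ whose $C_{0}\varepsilon^{k}$-balls cover $\Omega$ at every scale $k$, so Proposition \ref{p1} provides a nested cube hierarchy. Select the fine level with $C_{0}\varepsilon^{k}\asymp\Psi(\varepsilon)$ and denote its cubes by $Q_{1},\dots,Q_{n}$. Consider the linear program
\[
\max\sum_{i=1}^{n}m_{i}\quad\text{subject to }m_{i}\geq 0 \text{ and }\sum_{Q_{i}\cap \mathscr{B}_{d_{N}}(\widetilde{\mathbf{u}},r)\neq\emptyset}m_{i}\leq r^{Ns}
\]
for all $\widetilde{\mathbf{u}}\in\Omega$ and $r\in(\Psi(\varepsilon),\varepsilon]$. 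A direct LP dualisation identifies the optimum (up to the absolute constants $c_{0},C_{0}$) with $\inf\sum_{j}(2\rho_{j})^{Ns}$ taken over admissible $d_{N}$-ball covers of $\Omega$; by Step 1 this quantity is $\geq\delta_{0}$, so the primal attains mass at least $\delta_{0}$. Assigning mass $m_{i}$ to the centre of $Q_{i}$ and normalising yields a Borel probability measure $\nu_{\varepsilon,N}$ with finite support whose Frostman bound is inherited directly from the LP constraints, giving $\nu_{\varepsilon,N}(\mathscr{B}_{d_{N}}(\widetilde{\mathbf{u}},r))\leq \delta_{0}^{-1}r^{Ns}$ on the required range of $r$.

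The main obstacle is making every constant uniform in $(\varepsilon,N)$. The hypothesis $\mdim^{A}(\Omega,d,\Gamma)<\infty$ is exactly what enters at this point: it supplies an $(\varepsilon,N)$-independent bound on the multiplicity with which $d_{N}$-balls of radius $r\in(\Psi(\varepsilon),\varepsilon]$ meet the fine cubes $Q_{i}$. This multiplicity bound keeps the LP finite-dimensional with uniformly controlled structure and, crucially, guarantees that the cover-sum appearing in the dual is comparable, with an absolute constant, to the cover-sum lower bound produced in Step 1. Without finite mean Assouad dimension the multiplicity could grow with $N$ and the uniform Frostman constant $c$ would be lost, so this is the decisive quantitative input.
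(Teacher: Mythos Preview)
Your overall outline is different from the paper's argument, and the crucial step is not justified. The paper does \emph{not} use linear programming duality. Instead, following the classical Frostman construction, it builds the measure by an iterative mass-reduction on the dyadic cube hierarchy of Proposition~\ref{p1}: at the finest level $m$ (with $\gamma^{m}\asymp\Psi(\varepsilon)$) each cube is given mass $\gamma^{ms}$, and then, moving up level by level to level $m-l$ (with $\gamma^{m-l}\asymp\varepsilon$), the mass of every level-$(m-k)$ cube is scaled down if necessary so that it is at most $\gamma^{(m-k)s}$. A pigeonhole argument shows that through every point some cube retains mass exactly $\gamma^{(m-k)s}$; those cubes, padded via uniform perfectness to admissible diameters, form a cover, so the total mass is bounded below by a universal constant times the cover lower bound $\delta$ from Step~1. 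The Assouad hypothesis enters only at the very end, to bound how many level-$(m-j)$ cubes a ball of radius $r\asymp\gamma^{m-j}$ can meet; this is a \emph{fixed} scale ratio, so the bound is a constant $13^{a}C$ independent of $(\varepsilon,N)$. (Incidentally, \cite{BA} and \cite{13} also use this iterative construction, not LP duality.)

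Your proposal asserts that ``a direct LP dualisation identifies the optimum $\ldots$ with $\inf\sum_{j}(2\rho_{j})^{Ns}$ over admissible covers''. This is exactly the step that needs proof, and it does not follow from weak or strong LP duality. The dual of your LP minimises $\sum_{B}\lambda_{B}r_{B}^{Ns}$ over \emph{fractional} covers (nonnegative weights with $\sum_{B\cap Q_{i}\neq\emptyset}\lambda_{B}\geq 1$ for every fine cube), and integer covers are merely dual-feasible, so duality gives only $\text{primal max}=\text{dual min}\leq\text{cover-sum inf}$. To conclude $\text{primal max}\geq(\text{const})\cdot\delta_{0}$ you would need the reverse comparison: fractional covers are no cheaper than integer covers by more than a universal factor. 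Your appeal to the Assouad bound for ``multiplicity'' does not supply this. If the multiplicity means the number of fine cubes a ball of radius $r$ meets, that is $\lesssim (r/\Psi(\varepsilon))^{a}$, which blows up; if it means the number of admissible balls meeting a fixed fine cube, after discretisation there are still $\asymp\log(\varepsilon/\Psi(\varepsilon))$ scale levels, again not uniform. Either way the rounding constant you obtain depends on $\varepsilon$, and the uniform Frostman constant $c$ is lost. The iterative construction in the paper avoids this entirely by enforcing the Frostman bound scale by scale, so that the only place a covering multiplicity is ever needed is at a single comparable scale.
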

\begin{proof}
We only prove $(B)$, because the proof for $(A)$ follows a path similar to that for $(B)$.

Set $\mdim^{A}(\Omega,d,\Gamma)=a<\infty$, then there exist $C>0$ and $N(C)\in\mathbb{N}$ such that for all $N>N(C)$, we have
\begin{align*}
    \operatorname{span}(\Omega\cap \mathscr{B}_{d_N}(\widetilde{\mathbf{u}},R),N,d,\Gamma,r)\leq C{\Big(\frac{R}{r}\Big)}^{a},
\end{align*}
where $0<r<R$ and $\widetilde{\mathbf{u}}\in \Xi$.

Since $\Omega$ is dynamically uniformly perfect, set $0< c_2 <1$ such that $\Omega$ is $c_2$-dynamically uniformly perfect. Because $\lim_{\widetilde{\mathbf{u}}\to 0}\frac{\Psi(\widetilde{\mathbf{u}})}{\widetilde{\mathbf{u}}}=0$, there is $0<\varepsilon(c_2) <1$ such that for all $0<\varepsilon<\varepsilon(c_2)$, we have $\frac{\Psi(\varepsilon)}{\varepsilon}<\frac{c_2}{320}$.

By $s<\underline{\mdim}_{\Psi}(\Omega,d,\Gamma)$, there are $0<\delta<1$ and $0<\varepsilon(\delta)<1$, for any $0<\varepsilon<\varepsilon(\delta)$ there exists $N(\delta,\varepsilon)\in\mathbb{N}$, such that for all $N>N(\delta,\varepsilon)$, any cover $\{\mathscr{B}_{d_N}(\widetilde{\mathbf{u}}_i,r_i)\}_{i\in\mathbb{N}}$ of $\Omega$ which satisfies $\Psi(\varepsilon)< 2 r_i \leq \varepsilon$ and 
\begin{equation}\label{ne3}
\sum_{i\in\mathbb{N}}(2r_i)^{Ns}\geq \delta.
\end{equation}
For this $\delta$, let $0<\varepsilon<\min\{\varepsilon(c_2),\varepsilon(\delta)\}$ and $N>\max\{N(\delta,\varepsilon),N(C)\}$.

Let $R=\diam_{d}(\Omega)$, by $\Gamma(\Omega)\subseteq \Omega$, we have $\diam_{d_N}(\Omega)\leq \diam_{d}(\Omega)=R$ for every $N\in\mathbb{N}$. Set $\gamma=\min\{\frac{1}{20},R\}$. For each $k\in\mathbb{N}$, clearly $\gamma^{k}\leq R$ and
\begin{align*}
    \operatorname{span}(\Omega\cap \mathscr{B}_{d_N}(\widetilde{\mathbf{u}},R),N,d,\Gamma,\gamma^{k})=\operatorname{span}(\Omega,N,d,\Gamma,\gamma^{k})\leq C{\Big(\frac{R}{\gamma^{k}}\Big)}^{a} <+\infty.
\end{align*}
By the finite Vitali covering lemma, there exists a finite $\gamma^{k}$-separated subset $\mathcal{S}_{k,N}=\{\widetilde{\mathbf{u}}_{k,\alpha,N}\}_{\alpha}$ of $\Omega$. Then applying Proposition \ref{p1} with $c_0=C_0=1$, $A_0 =1$, $c_1=\frac{1}{3}$ and $C_1=2$, for each $k\in\mathbb{N}$, there exists a set $\mathbb{Q}_{k,N}=\{\mathscr{Q}_{k,\alpha,N}\}_{\alpha}$ of subsets of $\Omega$ such that:
\begin{enumerate}
  \item[(a)]  $\Omega=\bigcup_{\alpha} \mathscr{Q}_{k,\alpha,N}$ with the union disjoint,

  \item[(b)] There is a point $\widetilde{\mathbf{u}}_{k,\alpha,N}\in\mathcal{S}_{k,N}$ such that $$U_{d_N}\left(\widetilde{\mathbf{u}}_{k,\alpha,N},\frac{\gamma^k}{4}\right) \subseteq U_{d_N}\left(\widetilde{\mathbf{u}}_{k,\alpha,N}, c_{1}\gamma^{k}\right) \subseteq \mathscr{Q}_{k,\alpha,N} \subseteq \mathscr{B}_{d_N}\left(\widetilde{\mathbf{u}}_{k,\alpha,N}, C_{1}\gamma^{k}\right)= \mathscr{B}_{d_N}\left(\widetilde{\mathbf{u}}_{k,\alpha,N}, 2\gamma^{k}\right),$$

  \item[(c)] if $k_1, k_2 \in \mathbb{N}$ with $k_1 \leq k_2$ then for all $\alpha$ and $\beta$, either $\mathscr{Q}_{k_1,\alpha,N} \cap \mathscr{Q}_{k_2,\beta,N}=\varnothing$ or $\mathscr{Q}_{k_2,\beta,N}\subseteq \mathscr{Q}_{k_1,\alpha,N}$.
\end{enumerate}
Note that the $\mathscr{Q}_{k,\alpha,N}$ is a cube with centre $\widetilde{\mathbf{u}}_{k,\alpha,N}$. 

Define $m=m(\varepsilon)$ be the largest integer with $\frac{1}{2}\gamma^{m}<\Psi(\varepsilon)\leq \frac{1}{2}\gamma^{m-1}$. Since $\Psi(\varepsilon)<\frac{c_2}{320}\varepsilon<\frac{1}{320}\varepsilon$, we have $160\gamma^{m}<\varepsilon$. Let $l$ be the largest integer such that $8\gamma^{m-l}\leq \varepsilon$, it is clear that $l\geq 1$(recall that $\gamma\leq\frac{1}{20}$). According to the condition $(b)$,  we have $\diam_{d_N}(\mathscr{Q})\leq\frac{\varepsilon}{2}$ for all $\mathscr{Q}\in \mathbb{Q}_{m-l,N}$.

Now define the measure
$\nu_m$ on $\Omega$ as follows: for each $\mathscr{Q}\in\mathbb{Q}_{m,N}$ and Borel set $B\subseteq \Omega$, let
$$
\nu_{m,N} (B) = \sum_{\mathscr{Q}\in\mathbb{Q}_{m,N}: B\cap \mathscr{Q}\neq \emptyset} \gamma^{ms} M_{\widetilde{\mathbf{u}}_\mathscr{Q}},
$$
where $\widetilde{\mathbf{u}}_\mathscr{Q}$ is the centre of $\mathscr{Q}$ and $M_{\widetilde{\mathbf{u}}_\mathscr{Q}}$ is a unit point mass at $\widetilde{\mathbf{u}}_\mathscr{Q}$. According to $\nu_{m,N}$, define $\nu_{m-1,N}$ as follows,
\begin{align*}
\nu_{m-1,N} |_{\mathscr{Q}}= \min\left\{1, \gamma^{(m-1)s}\nu_{m,N}(\mathscr{Q})^{-1}\right\}\nu_{m,N} |_{\mathscr{Q}},
\end{align*}
where $\mathscr{Q}\in\mathbb{Q}_{m-1,N}$. Continuing inductively, $\nu_{m-k-1,N}$ is obtained from $\nu_{m-k,N}$ by
\begin{align*}
\nu_{m-k-1,N} |_{\mathscr{Q}}= \min\left\{1, \gamma^{(m-k-1)s}{\nu_{m-k,N}(\mathscr{Q})}^{-1}\right\}\nu_{m-k,N} |_{\mathscr{Q}},
\end{align*}
where $\mathscr{Q}\in\mathbb{Q}_{m-k-1,N}$ and $k=\{0,1,\cdots,l\}$. Assume that there are $k\in\{0,1,\cdots,l\}$ and $\mathscr{Q}_{m-k}\in\mathbb{Q}_{m-k,N}$ such that $\nu_{m-l,N}(\mathscr{Q}_{m-k})> \gamma^{(m-k)s}$. By construction, we have 
\begin{align*}
\gamma^{(m-k)s}<\nu_{m-l,N}(\mathscr{Q}_{m-k})\leq \nu_{m-l+1,N}(\mathscr{Q}_{m-k})\leq \cdots\leq \nu_{m-k,N}(\mathscr{Q}_{m-k})\leq \nu_{m-k+1,N}(\mathscr{Q}_{m-k})
\end{align*}
and
\begin{align*}
\nu_{m-k,N}(\mathscr{Q}_{m-k})=\min\left\{1,\frac{\gamma^{(m-k)s}}{\nu_{m-k+1,N}(\mathscr{Q}_{m-k})}\right\}\nu_{m-k+1,N}(\mathscr{Q}_{m-k})=\gamma^{(m-k)s},
\end{align*}
which contradicts the hypothesis.
Then if $k\in\{0,1,,\cdots,l\}$ and $\mathscr{Q}_{m-k}\in\mathbb{Q}_{m-k,N}$, we have
\begin{equation}\label{ne1}
\nu_{m-l,N}(\mathscr{Q}_{m-k})\leq \gamma^{(m-k)s}.
\end{equation}
By the condition $(b)$, \eqref{ne1} and $0<c_2 <1$,
\begin{equation}\label{ee2}
 \nu_{m-l,N}(\mathscr{Q}_{m-k})\leq \gamma^{(m-k)s}\leq 4^{s}{\diam_{d_N}(\mathscr{Q}_{m-k})}^{s}\leq 4^{s}c_{2}^{-s}{\diam_{d_N}(\mathscr{Q}_{m-k})}^{s}.
\end{equation}
Obviously, $\nu_{m,N}(\mathscr{Q}_{m})=\gamma^{ms}$ for any $\mathscr{Q}_{m}\in\mathbb{Q}_{m,N}$.
If $k \in$ $\{0,1, \ldots, l-1\}$ and $\mathscr{Q}_{m-k} \in \mathbb{Q}_{m-k,N}$ satisfies $\nu_{m-k,N}\left(\mathscr{Q}_{m-k}\right)=\gamma^{(m-k) s}$ and $\mathscr{Q}_{m-k-1} \in \mathbb{Q}_{m-k-1,N}$ is the parent of $\mathscr{Q}_{m-k}$, therefore, we have 
\begin{enumerate}
\item[(d)] if $\gamma^{m-k-1}\geq \nu_{m-k,N}(\mathscr{Q}_{m-k-1})$, then
\begin{align*}
\nu_{m-k-1,N}(\mathscr{Q}_{m-k})=\min\left\{ 1,\frac{\gamma^{(m-k-1)s}}{\nu_{m-k,N}(\mathscr{Q}_{m-k-1})} \right\}\nu_{m-k,N}(\mathscr{Q}_{m-k})=\gamma^{(m-k)s},
\end{align*}
\item[(e)] and if $\gamma^{m-k-1}< \nu_{m-k,N}(\mathscr{Q}_{m-k-1})$, then
\begin{align*}
\nu_{m-k-1,N}(\mathscr{Q}_{m-k-1})=\min\left\{ 1,\frac{\gamma^{(m-k-1)s}}{\nu_{m-k,N}(\mathscr{Q}_{m-k-1})} \right\}\nu_{m-k,N}(\mathscr{Q}_{m-k-1})=\gamma^{(m-k-1)s},
\end{align*}
\end{enumerate}
i.e. by the construction of $\nu_{m-k-1}$, either 
$$
\nu_{m-k-1,N}\left(\mathscr{Q}_{m-k}\right)=\gamma^{(m-k) s}\quad \text{or}\quad \nu_{m-k-1,N}\left(\mathscr{Q}_{m-k-1}\right)=\gamma^{(m-k-1) s}.
$$ 
Therefore for all $\widetilde{\mathbf{u}} \in \Omega$ there is at least one $k \in\{0,1, \ldots, l\}$ and $\mathscr{Q}_{\widetilde{\mathbf{u}}} \in \mathbb{Q}_{m-k,N}$ with $\widetilde{\mathbf{u}} \in \mathscr{Q}_{\widetilde{\mathbf{u}}}$ such that
\begin{equation}\label{ee3}
\nu_{m-l,N}\left(\mathscr{Q}_{\widetilde{\mathbf{u}}}\right)=\gamma^{(m-k) s} \geq 4^{-s}{\diam_{d_N}(\mathscr{Q}_{\widetilde{\mathbf{u}}})}^{s},
\end{equation}
where the inequality is by the condition $(b)$.

For each $\widetilde{\mathbf{u}} \in \Omega$, choosing $\mathscr{Q}_{\widetilde{\mathbf{u}}}$ such that \eqref{ee3} is satisfied and moreover $\mathscr{Q}_{\widetilde{\mathbf{u}}} \in \mathbb{Q}_{m-k,N}$ for the largest possible $k \in\{0,1, \ldots, l\}$ yields a finite collection of cubes $\left\{\mathscr{Q}_{i}\right\}_i$ which covers $\Omega$, this is because $\Omega$ is compact. For each $i$, let $\widetilde{\mathbf{u}}_{i}$ be the centre of $\mathscr{Q}_{i}$, and by the dynamically uniformly perfect condition there exists $p_{i} \in \Omega$ such that 
\begin{equation}\label{ne2}
\Psi\left(\varepsilon\right) < d_{N}\left(p_{i}, \widetilde{\mathbf{u}}_{i}\right) \leq \Psi\left(\varepsilon\right) / c_{2} \leq \varepsilon / 2.
\end{equation}
Letting $U_{i}=\mathscr{Q}_{i} \cup\left\{p_{i}\right\}$, by the condition $(b)$ and \eqref{ne2}, we have $\Psi\left(\varepsilon\right) < \diam_{d_N}(U_{i}) \leq \varepsilon$. Then $\left\{U_{i}\right\}_i$ covers $\Omega$. For each $i$
\begin{equation}\label{ne4}
\diam_{d_N}(U_i) \leq\diam_{d_N}(\mathscr{Q}_i)+\Psi\left(\varepsilon\right) / c_{2} \leq\left(1+1 / c_{2}\right)\diam_{d_N}(\mathscr{Q}_i),
\end{equation}
and there is a $\widetilde{\mathbf{u}}^{c}_{i}\in \Omega$ such that $U_i\subseteq \mathscr{B}_{d_N}(\widetilde{\mathbf{u}}^{c}_{i},\diam_{d_N}(U_i))$. It is clear that $\diam_{d_N}(U_i)<1$ for every $i$ and $\{\mathscr{B}_{d_N}(\widetilde{\mathbf{u}}^{c}_{i},\diam_{d_N}(U_i))\}_{i}$ is a cover of $\Omega$. It follows from \eqref{ne3} that
\begin{equation}\label{ne5}
\sum_{i}(2\diam_{d_N}(U_i))^{s}\geq\sum_{i}(2\diam_{d_N}(U_i))^{Ns}\geq \delta.
\end{equation}
By using \eqref{ee3}, \eqref{ne4} and \eqref{ne5}, we obtain
\begin{align*}
\nu_{m-l,N}(\Omega) & =\sum_{i} \nu_{m-l,N}\left(\mathscr{Q}_{i}\right) \\&\geq \sum_{i} 8^{-s}{(2\diam_{d_N}(\mathscr{Q}_i))}^{s} \\
& \geq 8^{-s}\left(1+1 / c_{2}\right)^{-s} \sum_{i}{(2\diam_{d_N}(U_i))}^{s}\\& \geq 8^{-s}\left(1+1 / c_{2}\right)^{-s} \delta.
\end{align*}
Define $\nu_{\varepsilon,N}:=\left(\nu_{m-l,N}(\Omega)\right)^{-1} \nu_{m-l,N}$, which is clearly a Borel probability measure with finite support.

Since $\mdim_{A}(\Omega,d,\Gamma)=a<+\infty$, for every $p\in \Omega$ and $w>0$,
 \begin{align*}
\operatorname{span}(\Omega\cap \mathscr{B}_{d_N}(p,13w),N,d,\Gamma,w)=\operatorname{span}(\mathscr{B}_{d_N}(p,13w),N,d,\Gamma,w)\leq C{\Big(\frac{13w}{w}\Big)}^{a}=13^{a}C.
\end{align*}
Let $\Psi(\varepsilon)< r\leq \varepsilon$ and $j=j(r)$ be the integer in $\{0,1, \ldots, l\}$ such that $\gamma^{m-j+1}<r\leq \gamma^{m-j}$, such an integer exists by the definition of $m$. Let $\widetilde{\mathbf{u}}\in \Omega$, suppose $\mathscr{B}_{d_N}(\widetilde{\mathbf{u}}, r) \cap \mathscr{Q}_{m-j} \neq \emptyset$ for some $\mathscr{Q}_{m-j} \in \mathbb{Q}_{m-j,N}$ with centre $z_{m-j}$. Then there exists $z \in \mathscr{B}_{d_N}(\widetilde{\mathbf{u}}, r) \cap \mathscr{Q}_{m-j}$, and by the condition $(b)$ and the definition of $j$,
$$
d_{N}\left(\widetilde{\mathbf{u}}, z_{m-j}\right) \leq d_{N}\left(\widetilde{\mathbf{u}}, z\right)+d_{N}\left(z, z_{m-j}\right) \leq 2 r+2\gamma^{m-j} \leq 6\gamma^{m-j} .
$$
Therefore $z_{m-j} \in \mathscr{B}_{d_N}\left(\widetilde{\mathbf{u}}, 6\gamma^{m-j}\right)$, and the centres of the cubes in $\mathbb{Q}_{m-j,N}$ which intersect $\mathscr{B}_{d_N}(\widetilde{\mathbf{u}}, r)$ form a $\gamma^{m-j}$-separated subset of $\mathscr{B}_{d_N}\left(\widetilde{\mathbf{u}}, 6\gamma^{m-j}\right)$. But

$$
\operatorname{span}(\mathscr{B}_{d_N}(\widetilde{\mathbf{u}},6\gamma^{m-j}),N,d,\Gamma,\frac{6\gamma^{m-j}}{13})\leq 13^{a}C.
$$
Therefore there are at most $13^{a}C$ such centres, so at most $13^{a}C$ elements of $\mathbb{Q}_{m-j,N}$ which intersect $\mathscr{B}_{d_N}(\widetilde{\mathbf{u}}, r)$. Therefore by the definition of $j$,

$\nu_{\varepsilon,N}\left(\mathscr{B}_{d_N}(\widetilde{\mathbf{u}}, r)\right)=\left(\nu_{m-l,N}(\Omega)\right)^{-1} \nu_{m-l,N}\left(\mathscr{B}_{d_N}(\widetilde{\mathbf{u}}, r)\right) \leq 13^{a}C\left(\nu_{m-l,N}(\Omega)\right)^{-1} \gamma^{(m-j) s} \leq c r^{s}$,

where $c:=13^{a}C 8^{s}\left(1+1 / c_{2}\right)^{s} \delta^{-1}\gamma^{-s}$, as required.

\end{proof}

Now we give the equivalent definition of mean $\Psi$-intermediate dimension.
\begin{proposition}\label{p343}
Let $(\Omega,\Gamma)$ be a TDS which satisfies dynamically uniformly perfect, $\Psi$ be an admissible function and $\mdim^{A}(\Omega,d,\Gamma)< +\infty$. 
\begin{enumerate}
\item[(a)] If $\overline{\mdim}_{\Psi}(\Omega,d,\Gamma)>0$, then
\begin{align*}
\overline{\mdim}_{\Psi}(\Omega,d,\Gamma)=\sup\Bigg\{&s>0 \mid \text{ there exists } C\in (0,+\infty) \text{ such that for all } 0<\varepsilon_0 <1 \text{ and } N_0 \in\mathbb{N}, \\
&\text{ there exist } 0<\varepsilon\leq \varepsilon_0, N>N_0 \text{ and a measure } \nu_{\varepsilon,N}  
\text{ such that if } U \text{ is a Borel }\\
&\text{ subset of } \Omega \text{ which satisfies } \Psi(\varepsilon)<\diam_{d_N}(U)\leq \varepsilon
\text{ then } \\&\nu_{\varepsilon,N}(U)\leq C \diam_{d_N}(U)^{Ns}\Bigg\}.
\end{align*}
\item[(b)] If $\underline{\mdim}_{\Psi}(\Omega,d,\Gamma)>0$, then
\begin{align*}
\underline{\mdim}_{\Psi}(\Omega,d,\Gamma)=\sup\Bigg\{&s>0 \mid \text{ there exists } C\in (0,+\infty) \text{ such that there exists  } 0<\varepsilon(C) <1 \text{ for any }\\
&0<\varepsilon<\varepsilon(C)\text{ there exists } N(C,\varepsilon) \in\mathbb{N} \text{ which satisfies for any }N>N(C,\varepsilon) ,\\ &\text{ there exists }
\text{ a measure }\nu_{\varepsilon,N} 
\text{ such that if } U \text{ is a Borel subset of } \Omega \text{ with}\\  & \Psi(\varepsilon)<\diam_{d_N}(U)\leq \varepsilon
\text{ then } \nu_{\varepsilon,N}(U)\leq C \diam_{d_N}(U)^{Ns}\Bigg\}.
\end{align*}
\end{enumerate}
\end{proposition}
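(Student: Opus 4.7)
The plan is to establish both inequalities (``$\leq$'' and ``$\geq$'') in each of parts (a) and (b), using the Frostman-type construction of Lemma~\ref{l52} for one direction and the mass distribution principle of Lemma~\ref{l321} for the other.

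First, to show $\overline{\mdim}_{\Psi}(\Omega,d,\Gamma) \le \sup\{\cdots\}$ in part (a), I would fix any $s \in (0,\overline{\mdim}_{\Psi}(\Omega,d,\Gamma))$ and invoke Lemma~\ref{l52}(A). This yields a constant $c$, a decreasing sequence $\mathcal{E} = \{\varepsilon_{\alpha}\}$, an increasing sequence $\mathcal{N} = \{N_{\beta}\}$, and Borel probability measures $\nu_{\varepsilon,N}$ indexed by $(\varepsilon,N)\in\mathcal{E}\times\mathcal{N}$ satisfying the ball bound $\nu_{\varepsilon,N}(\mathscr{B}_{d_N}(\widetilde{\mathbf{u}},r))\le cr^{Ns}$ whenever $\Psi(\varepsilon)<r\le \varepsilon$. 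Given any $\varepsilon_0\in(0,1)$ and $N_0\in\mathbb{N}$, I would pick $\varepsilon \in \mathcal{E}$ with $\varepsilon\le \varepsilon_0$ and $N \in \mathcal{N}$ with $N>N_0$. For any Borel set $U$ with $\Psi(\varepsilon)<\diam_{d_N}(U)\le \varepsilon$ and any $\widetilde{\mathbf{u}}\in U$, the inclusion $U\subseteq \mathscr{B}_{d_N}(\widetilde{\mathbf{u}},\diam_{d_N}(U))$ upgrades the ball bound to $\nu_{\varepsilon,N}(U)\le c\,\diam_{d_N}(U)^{Ns}$, placing $s$ in the supremum set with $C=c$. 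Letting $s$ approach $\overline{\mdim}_{\Psi}(\Omega,d,\Gamma)$ yields the claimed inequality. Part (b) follows by the same translation from balls to Borel sets applied to Lemma~\ref{l52}(B), since the lower-case Frostman lemma delivers measures for all sufficiently small $\varepsilon$ and sufficiently large $N$.

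For the reverse inequality $\overline{\mdim}_{\Psi}(\Omega,d,\Gamma) \ge \sup\{\cdots\}$, I would take any $s$ in the supremum set with constant $C$. The supremum condition, applied successively with $(\varepsilon_0,N_0)=(1/n,n)$, produces a sequence of pairs $(\varepsilon_n,N_n)$ with $\varepsilon_n\to 0$ and $N_n\to\infty$, together with measures $\nu_{\varepsilon_n,N_n}$ satisfying the Borel bound. For any cover of $\Omega$ by balls $\mathscr{B}_{d_{N_n}}(\widetilde{\mathbf{u}}_i,r_i)$ with $\Psi(\varepsilon_n)<2r_i\le \varepsilon_n$, the dynamically uniformly perfect hypothesis ensures each such ball has diameter in the admissible range, so
\[
1=\nu_{\varepsilon_n,N_n}(\Omega)\le \sum_i \nu_{\varepsilon_n,N_n}\bigl(\mathscr{B}_{d_{N_n}}(\widetilde{\mathbf{u}}_i,r_i)\bigr)\le C\sum_i (2r_i)^{N_n s},
\]
hence $\sum_i (2r_i)^{N_n s}\ge 1/C$ along the admissible scales. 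Feeding these sequences and measures into Lemma~\ref{l321}(A) gives $\overline{\mdim}_{\Psi}(\Omega,d,\Gamma)\ge s$, and passing to the supremum over $s$ yields the bound. For part (b), the stronger ``for all small $\varepsilon$ and all large $N$'' form of the supremum condition directly provides the double-indexed measure family demanded by Lemma~\ref{l321}(B), so the argument goes through without further modification.

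The main obstacle I anticipate is the mismatch in quantifier ordering between the upper-case supremum condition (which couples $\varepsilon$ and $N$) and the decoupled double-indexed hypothesis of Lemma~\ref{l321}(A) (which requires measures along an independent pair $(\varepsilon_n,N_m)$). I expect to resolve this either by a diagonal extraction that yields a coherent measure family along the pairs $(\varepsilon_n,N_n)$, or by unfolding the definition of $\overline{\mdim}_{\Psi}$ directly and exploiting the cover bound $\sum (2r_i)^{Ns}\ge 1/C$ in place of the literal statement of Lemma~\ref{l321}. A secondary technicality, handled by the dynamically uniformly perfect assumption (and the finiteness of $\mdim^{A}(\Omega,d,\Gamma)$), is ensuring that ball diameters genuinely lie in $(\Psi(\varepsilon),\varepsilon]$ so that the Borel-set measure bound indeed applies to every element of an admissible cover.
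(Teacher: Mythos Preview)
Your approach mirrors the paper's: invoke Lemma~\ref{l52} for the ``$\leq$'' direction and Lemma~\ref{l321} for ``$\geq$''. Two points of comparison deserve mention. First, your passage from the ball bound of Lemma~\ref{l52} to the Borel-set bound in the supremum is cleaner than the paper's: you observe directly that $U\subseteq\mathscr{B}_{d_N}(\widetilde{\mathbf{u}},\diam_{d_N}(U))$ for any $\widetilde{\mathbf{u}}\in U$ and apply the ball bound with $r=\diam_{d_N}(U)$, whereas the paper instead covers $\mathscr{B}_{d_N}(\widetilde{\mathbf{u}},2\diam_{d_N}(U))$ by at most $2^aK$ balls of radius $\diam_{d_N}(U)$ using the finite mean Assouad dimension and sums the resulting estimates. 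Both are correct, but your route shows that the Assouad hypothesis is not actually needed at this particular step.

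Second, the quantifier mismatch you flag in part~(a) between the coupled $(\varepsilon,N)$ in the supremum condition and the decoupled double-indexed family required by Lemma~\ref{l321}(A) is a genuine subtlety. The paper proves only part~(b) and dispatches the reverse inequality in a single sentence (``it follows from Lemma~\ref{l321}''), so it is no more careful on this point than your proposal. Note, however, that a similar mismatch is present in~(b) as well: the supremum there allows the threshold $N(C,\varepsilon)$ to depend on $\varepsilon$, while the hypothesis of Lemma~\ref{l321}(B) as stated asks for a uniform $N_0$; your claim that (b) ``goes through without further modification'' therefore understates the issue. Your suggested remedy of unfolding Definition~\ref{d4} directly, rather than citing Lemma~\ref{l321} verbatim, is the right way to close both gaps.
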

\begin{proof}
We focus solely on proving $(b)$ since the proof of $(a)$ follows a similar approach. We denote by $low$ the supremum on the right-hand side of the equation $(b)$.

Set $s\in (0,\underline{\mdim}_{\Psi}(\Omega,d,\Gamma))$. Then by using Lemma \ref{l52}, for all sufficiently small $\varepsilon$ and sufficiently large $N$, there is a Borel probability measure $\nu_{\varepsilon,N}$ which has finite support and satisfies if $\widetilde{\mathbf{u}}\in \Omega$ and $\Psi(\varepsilon)< r\leq\varepsilon$ then $\nu_{\varepsilon,N}(\mathscr{B}_{d_N}(\widetilde{\mathbf{u}},r))\leq cr^{Ns}$. 

Let $U$ be a Borel subset of $\Omega$ satisfying $\Psi(\varepsilon)<\diam_{d_N}(U)\leq \varepsilon$. Since $\mdim^{A}(\Omega,d,\Gamma)=a< +\infty$, there are $K>0$ and $N(K)>0$ such that for all $N>N(K)$,
$$
\operatorname{span}(\mathscr{B}_{d_N}(\widetilde{\mathbf{u}},2\diam_{d_N}(U)),N,d,\Gamma,\diam_{d_N}(U))\leq 2^a K
$$
holds for all $\widetilde{\mathbf{u}}\in \Omega$.

Set $M=2^a K$ and $C=Mc$. 
If $U\cap \supp(\nu_{\varepsilon,N}) \neq\emptyset$, take $\widetilde{\mathbf{u}}\in U\cap \supp(\nu_{\varepsilon,N})$. Then 
$$
U\cap \supp(\nu_{\varepsilon,N})\subseteq \mathscr{B}_{d_N}(\widetilde{\mathbf{u}},2\diam_{d_N}(U)),
$$
so there exist $\widetilde{\mathbf{u}}_1,\cdots,\widetilde{\mathbf{u}}_M\in \Omega$ such that 
$$
U\cap \supp(\nu_{\varepsilon,N})\subseteq \mathscr{B}_{d_N}(\widetilde{\mathbf{u}},2\diam_{d_N}(U))\subseteq \bigcup_{i=1}^{M}\mathscr{B}_{d_N}(\widetilde{\mathbf{u}}_i,\diam_{d_N}(U)).
$$
Therefore, we obtain
\begin{align*}
\nu_{\varepsilon,N}(U)\leq \sum_{i=1}^{M}\nu_{\varepsilon,N}(\mathscr{B}_{d_N}(\widetilde{\mathbf{u}}_i,\diam_{d_N}(U)))\leq Mc\diam_{d_N}(U)^{Ns}=C\diam_{d_N}(U)^{Ns}.
\end{align*}
And if $U\cap \supp(\nu_{\varepsilon,N})=\emptyset$ then $\nu_{\varepsilon,N}=0\leq C\diam_{d_N}(U)^{Ns}$. Thus $s<low$ and $\underline{\mdim}_{\Psi}(\Omega,d,\Gamma)\leq low$. 

For the reverse inequality, let $t\in (0,low)$ then it follows from Lemma \ref{l321} that $t\leq \underline{\mdim}_{\Psi}(\Omega,d,\Gamma)$ and $low\leq \underline{\mdim}_{\Psi}(\Omega,d,\Gamma)$.
\end{proof}

\subsection{Hölder distortion}
In dimension theory, a significant aspect lies in its practical applications. The concept revolves around the idea that meaningful interpolation between two dimensions leads to more insightful information compared to considering the dimensions independently. This enhanced information, in turn, facilitates more robust applications, particularly in deriving new insights into mean Hausdorff dimensions and metric mean dimensions using mean intermediate dimensions. One notable application of dimension theory arises from the observation that dimensions often remain invariant, or approximately invariant in a quantifiable manner, under certain transformations. For instance, the Hausdorff, box,  Assouad, and intermediate dimensions maintain invariance under bi-Lipschitz maps, serving as valuable invariants for classification up to bi-Lipschitz image transformations. Furthermore, the Assouad spectrum and intermediate dimensions also exhibit invariance under bi-Lipschitz maps, offering a continuum of invariants within the same classification context. Our exploration now delves into the behavior of mean intermediate dimensions under Hölder and Lipschitz maps.

We say that a map $f: \Omega \rightarrow \Delta$ is Hölder or $(C,\alpha)$-Hölder if there exist constants $\alpha\in(0,1]$ and $C\in[0, \infty)$ such that
$$
d^{\Delta}\left(f\left(\widetilde{\mathbf{u}}_{1}\right), f\left(\widetilde{\mathbf{u}}_{2}\right)\right) \leq C d^{\Omega}\left(\widetilde{\mathbf{u}}_{1}, \widetilde{\mathbf{u}}_{2}\right)^{\alpha}  
$$
holds for all $\widetilde{\mathbf{u}}_{1}, \widetilde{\mathbf{u}}_{2} \in \Omega$, where $d^\Omega$ and $d^\Delta$ are metrics on $\Omega$ and $\Delta$ respectively.
\begin{proposition}\label{p341}
Let $\Psi$, $\Psi_{1}$ be admissible functions, $(\Omega,\Gamma_\Omega)$, $(\Delta,\Gamma_\Delta)$ be two TDSs, and $d^\Omega$, $d^\Delta$ be metrics on $\Omega$ and $\Delta$ respectively. If $f: \widetilde{\Xi} \rightarrow \Delta$ is a $(C,\alpha)$-Hölder map for some $\widetilde{\Xi} \subseteq \Omega$ which satisfies 
\begin{enumerate}
\item[(a)] $(\Delta,d^\Delta)$ is $p$-dynamically uniformly perfect, 
\item[(b)] $\Psi_{1}(\varepsilon)\leq Cp\Psi\left((\frac{\varepsilon}{4C})^{\frac{1}{\alpha}}\right)^{\alpha}$ for all sufficiently small $\varepsilon$,
\item[(c)] $C\leq\frac{1}{4}$,
\end{enumerate}
then
$$
\overline{\mdim}_{\Psi_{1}} (f(\widetilde{\Xi}),d^\Delta,\Gamma_\Delta) \leq \frac{1}{\alpha}\overline{\mdim}_{\Psi} (\widetilde{\Xi},d^\Omega,\Gamma_\Omega)
$$
and
$$
\underline{\mdim}_{\Psi_{1}} (f(\widetilde{\Xi}),d^\Delta,\Gamma_\Delta) \leq \frac{1}{\alpha}\underline{\mdim}_{\Psi} (\widetilde{\Xi},d^\Omega,\Gamma_\Omega).
$$
\end{proposition}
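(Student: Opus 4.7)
The plan is to prove the bound on the upper mean $\Psi_1$-intermediate dimension first; the corresponding inequality for $\underline{\mdim}_{\Psi_1}$ follows by precisely the same construction, with the universal quantifier on $\varepsilon$ in Definition~\ref{d4} replaced by the existential one. A preliminary observation, tacitly required for the conclusion to involve both $\Gamma_\Omega$ and $\Gamma_\Delta$, is that $f$ intertwines the two dynamics, i.e.\ $f\circ\Gamma_\Omega=\Gamma_\Delta\circ f$. Under this the Hölder estimate lifts at once to the $N$-step metrics:
\begin{align*}
d^\Delta_N(f(\widetilde{\mathbf{u}}),f(\widetilde{\mathbf{v}})) = \max_{0\le i\le N-1} d^\Delta\bigl(f(\Gamma_\Omega^i \widetilde{\mathbf{u}}), f(\Gamma_\Omega^i \widetilde{\mathbf{v}})\bigr) \le C\,d^\Omega_N(\widetilde{\mathbf{u}},\widetilde{\mathbf{v}})^\alpha,
\end{align*}
so that $f(\mathscr{B}_{d^\Omega_N}(\widetilde{\mathbf{u}},r)\cap\widetilde{\Xi})\subseteq\mathscr{B}_{d^\Delta_N}(f(\widetilde{\mathbf{v}}),C(2r)^\alpha)$ for any fixed $\widetilde{\mathbf{v}}$ in that intersection.

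Fix $s>\overline{\mdim}_{\Psi}(\widetilde{\Xi},d^\Omega,\Gamma_\Omega)$ and set $s_1:=s/\alpha$; the goal is $s_1\ge\overline{\mdim}_{\Psi_1}(f(\widetilde{\Xi}),d^\Delta,\Gamma_\Delta)$. I would couple the two scales by $\varepsilon_1:=4C\varepsilon^\alpha$, so that hypothesis~(b) applied at $\varepsilon_1$ yields $\Psi_1(\varepsilon_1)\le Cp\,\Psi(\varepsilon)^\alpha$ via $(\varepsilon_1/4C)^{1/\alpha}=\varepsilon$. For any $\delta\in(0,1)$, the definition of $\overline{\mdim}_{\Psi}$ produces, once $\varepsilon$ is small and $N$ is large, a cover $\{\mathscr{B}_{d^\Omega_N}(\widetilde{\mathbf{u}}_i,r_i)\}_i$ of $\widetilde{\Xi}$ with $\Psi(\varepsilon)<2r_i\le\varepsilon$ and $\sum_i(2r_i)^{Ns}<\delta$. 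I then propose the cover $V_i:=\mathscr{B}_{d^\Delta_N}(f(\widetilde{\mathbf{v}}_i),\rho_i)$ of $f(\widetilde{\Xi})$, where $\widetilde{\mathbf{v}}_i\in\mathscr{B}_{d^\Omega_N}(\widetilde{\mathbf{u}}_i,r_i)\cap\widetilde{\Xi}$ (discarding the index if this intersection is empty) and $\rho_i:=C(2r_i)^\alpha$; by the preliminary this family covers $f(\widetilde{\Xi})$.

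Checking the size constraints of Definition~\ref{d4} at scale $\varepsilon_1$: the upper bound is $2\rho_i\le 2C\varepsilon^\alpha=\varepsilon_1/2\le\varepsilon_1$. The lower bound uses $2r_i>\Psi(\varepsilon)$, hypothesis~(b), and $p<1$ from~(a):
\begin{align*}
2\rho_i = 2C(2r_i)^\alpha > 2C\Psi(\varepsilon)^\alpha \ge 2\Psi_1(\varepsilon_1)/p > \Psi_1(\varepsilon_1).
\end{align*}
The $(Ns_1)$-sum, via the exponent exchange $\alpha s_1=s$, becomes
\begin{align*}
\sum_i(2\rho_i)^{Ns_1} = (2C)^{Ns_1}\sum_i(2r_i)^{\alpha Ns_1} = (2C)^{Ns_1}\sum_i(2r_i)^{Ns},
\end{align*}
and condition~(c) delivers $2C\le 1/2$, so $(2C)^{Ns_1}\le 1$ and the sum is $<\delta$. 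Translating the thresholds by $\varepsilon_1(\delta):=4C\,\varepsilon(\delta)^\alpha$ and $N_1(\delta,\varepsilon_1):=N(\delta,\varepsilon)$ verifies the defining property of $\overline{\mdim}_{\Psi_1}$, giving $s_1\ge\overline{\mdim}_{\Psi_1}(f(\widetilde{\Xi}),d^\Delta,\Gamma_\Delta)$; sending $s\searrow\overline{\mdim}_{\Psi}(\widetilde{\Xi},d^\Omega,\Gamma_\Omega)$ completes the upper inequality.

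The hardest point is the strict lower size bound $2\rho_i>\Psi_1(\varepsilon_1)$, whose proof uses all three hypotheses in concert: the strict inequality $2r_i>\Psi(\varepsilon)$ in the cover together with $\alpha\in(0,1]$ yields $2C(2r_i)^\alpha>2C\Psi(\varepsilon)^\alpha$; (b) in exactly the form stated converts this into a bound involving $\Psi_1(\varepsilon_1)/p$; and the perfectness constant $p<1$ from (a) turns the non-strict bound into a strict one. Condition~(c) plays the complementary role of taming the prefactor $(2C)^{Ns_1}$ uniformly in $N$, which is indispensable since $N\to\infty$ in the definition of every mean dimension. The inequality for $\underline{\mdim}_{\Psi_1}$ is obtained by the identical construction, merely exhibiting one admissible pair $(\varepsilon,N)$ below the prescribed thresholds rather than quantifying over all of them.
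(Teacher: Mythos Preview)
Your argument is correct and in fact slightly cleaner than the paper's. The overall architecture is the same---pull back a $\Psi$-admissible cover of $\widetilde{\Xi}$ at scale $\varepsilon$ and push it forward to a $\Psi_1$-admissible cover of $f(\widetilde{\Xi})$ at scale $\varepsilon_1=4C\varepsilon^\alpha$---but the two proofs diverge in how they enforce the \emph{lower} size bound $\Psi_1(\varepsilon_1)<2\rho_i$. The paper invokes hypothesis~(a) geometrically: for each $i$ it selects, via $p$-dynamical uniform perfectness, an auxiliary point $\widetilde{\mathbf{v}}_i^{*}\in\Delta$ with $\Psi_1(\varepsilon_1)\le d^\Delta_N(\widetilde{\mathbf{v}}_i,\widetilde{\mathbf{v}}_i^{*})\le \Psi_1(\varepsilon_1)/p$, adjoins it to the image set $U_i=f(\mathscr{B}_{d^\Omega_N}(\widetilde{\mathbf{u}}_i,r_i))\cup\{\widetilde{\mathbf{v}}_i^{*}\}$, and only then encloses $U_i$ in a ball of radius $l_i=\diam_{d^\Delta_N}(U_i)$, obtaining $\Psi_1(\varepsilon_1)\le 2l_i\le 4C(2r_i)^\alpha$. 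You instead fix the radius $\rho_i=C(2r_i)^\alpha$ outright and verify the lower bound purely algebraically from $2r_i>\Psi(\varepsilon)$ and hypothesis~(b), using (a) only for the numerical fact $p<1$. This buys you a smaller diameter ($2\rho_i=2C(2r_i)^\alpha$ versus the paper's $2l_i\le 4C(2r_i)^\alpha$) and hence the prefactor $(2C)^{Ns_1}$ rather than $(4C)^{Nt}$; condition~(c) makes either harmless. Your explicit remark that the equivariance $f\circ\Gamma_\Omega=\Gamma_\Delta\circ f$ is needed to lift the H\"older estimate to $d_N$ is also well taken: the paper uses this step (``according to the definition of $d_N^\Omega$'') without stating the hypothesis.
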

\begin{proof}
We only prove the first inequality, as the proof of the second inequality is similar. 
    
Let $t>\frac{1}{\alpha}\overline{\mdim}_{\Psi} (\widetilde{\Xi},d^\Delta,\Gamma_\Delta)$, then there exists $s>\overline{\mdim}_{\Psi} (\widetilde{\Xi},d^\Omega,\Gamma_\Omega)$ such that $t>\frac{s}{\alpha}$. Let $0<\delta<1$, then there exists $0<\varepsilon(\delta)<1$ such that for any $0<\varepsilon<\varepsilon(\delta)$, there exists $N(\delta,\varepsilon)\in\mathbb{N}$ such that for all $N>N(\delta,\varepsilon)$, there is a cover $\{\mathscr{B}_{d_N^\Omega}(\widetilde{\mathbf{u}}_i,r_i)\}_{i\in\mathbb{N}}$ of $\widetilde{\Xi}$ with $\Psi(\varepsilon) < 2r_i \leq \varepsilon$ for all $i\in\mathbb{N}$, and
\begin{equation}\label{e43}
\sum_{i\in\mathbb{N}}(2r_i)^{Ns} <\delta .
\end{equation}
Then $\{f(\mathscr{B}_{d_N^\Omega}(\widetilde{\mathbf{u}}_i,r_i))\}_{i\in\mathbb{N}}$ covers $f(\widetilde{\Xi})$. By condition $(a)$, $p$ is the constant such that $(\Delta,d^\Delta)$ is dynamically uniformly perfect, then for every $i\in\mathbb{N}$ and $\widetilde{\mathbf{v}}_i\in f(\mathscr{B}_{d_N^\Omega}(\widetilde{\mathbf{u}}_i,r_i))$, there exists $\widetilde{\mathbf{v}}_i^*\in \Delta$ such that $$\Psi_{1}(4C\varepsilon^{\alpha})\leq d^{\Delta}_{N}(\widetilde{\mathbf{v}}_i,\widetilde{\mathbf{v}}_i^*)\leq \frac{\Psi_{1}(4C\varepsilon^{\alpha})}{p}\leq C\Psi(\varepsilon)^{\alpha}\leq C(2r_i)^{\alpha}$$ and let $U_i=f(\mathscr{B}_{d_N^\Omega}(\widetilde{\mathbf{u}}_i,r_i))\cup \{\widetilde{\mathbf{v}}_i^*\}$.

According to the definition of $d_{N}^{\Omega}$,  we have 
\begin{align*}
\diam_{d_N^\Delta}(f(\mathscr{B}_{d_N^\Omega}(\widetilde{\mathbf{u}}_i,r_i)))
\leq  C{\diam_{d_N^\Omega}(\mathscr{B}_{d_N^\Omega}(\widetilde{\mathbf{u}}_i,r_i))}^{\alpha}=  C(2r_i)^{\alpha}
\leq  C\varepsilon^{\alpha}
\end{align*}
holds for every $N\in\mathbb{N}$.

For every $i\in\mathbb{N}$ and $N>N(\delta,\varepsilon)$, choose a close ball $\mathscr{B}_{d^\Delta_N}({\widetilde{\mathbf{v}}}_{i}^{c},l_i)$ with $l_i =\diam_{d_N^\Delta}(U_i)$ and $U_i \subseteq \mathscr{B}_{d^\Delta_N}({\widetilde{\mathbf{v}}}_{i}^{c},l_{i})$. So $\{\mathscr{B}_{d^\Delta_N}({\widetilde{\mathbf{v}}}_{i}^{c},l_{i})\}_{i\in\mathbb{N}}$ covers $f(\widetilde{\Xi})$, and then
\begin{align*}
    \Psi_{1}(4C\varepsilon^{\alpha}) \leq 2l_i &\leq 2d^{\Delta}_{N}(\widetilde{\mathbf{v}}_i,\widetilde{\mathbf{v}}_i^*)+2\diam_{d_N^\Delta}(f(\mathscr{B}_{d_N^\Omega}(\widetilde{\mathbf{u}}_i,r_i)))\\
    &\leq 2\frac{\Psi_{1}(4C\varepsilon^{\alpha})}{p} + 2C(2r_i)^{\alpha} \\&\leq 4C(2r_i)^{\alpha}
   \\& \leq 4C\varepsilon^{\alpha}.
\end{align*}

Therefore, we can deduce that
\begin{align*}
    \sum_{i\in\mathbb{N}}(2l_i)^{Nt}\leq (4C)^{Nt}\sum_{i\in\mathbb{N}}(2r_i)^{N\alpha t}\leq (4C)^{t}\sum_{i\in\mathbb{N}}(2r_i)^{Ns}\leq (4C)^{t}\delta.
\end{align*}
Thus, $t\geq \overline{\mdim}_{\Psi_{1}} (f(\widetilde{\Xi}),d^\Delta,\Gamma_\Delta)$ and $$\frac{1}{\alpha}\overline{\mdim}_{\Psi} (\widetilde{\Xi},d^\Delta,\Gamma_\Delta)\geq \overline{\mdim}_{\Psi_{1}} (f(\widetilde{\Xi}),d^\Delta,\Gamma_\Delta).$$
\end{proof}

\subsection{Product formulae} It is common to connect the dimensions of sets with those of their products. The following product formulas for the mean intermediate dimensions are not only intriguing in their regard but also practical for building examples.

Let $(\Omega,\Gamma_\Omega)$ and $(\Delta,\Gamma_\Delta)$ be two TDSs, and let $(\Omega,d^\Omega)$ and $(\Delta,d^\Delta)$ be two compact metric space. Let $\Omega\times \Delta$ denote the product space. 
For any $\widetilde{\mathbf{u}}=(\widetilde{\mathbf{u}}_1,\widetilde{\mathbf{u}}_2)\in \Omega\times \Delta$, define the continuous map $\Gamma_{\Omega\times \Delta}:\Omega\times \Delta \to \Omega\times \Delta$ as, 
$$
\Gamma_{\Omega\times \Delta}(\widetilde{\mathbf{u}})=\Big(\Gamma_{\Omega}(\widetilde{\mathbf{u}}_1),\Gamma_{\Delta}(\widetilde{\mathbf{u}}_2) \Big).
$$
Then we say $(\Omega\times \Delta,\Gamma_{\Omega\times \Delta})$ is a TDS which is introduced by $(\Omega,\Gamma_\Omega)$ and $(\Delta,\Gamma_\Delta)$.

\begin{proposition}\label{p351}
Let $(\Omega,\Gamma_\Omega)$ and $(\Delta,\Gamma_\Delta)$ be two TDSs and $\Psi$ be a admissible function. Consider the TDS $(\Omega\times \Delta,  \Gamma_{\Omega\times \Delta} )$ which is introduced by $(\Omega,\Gamma_\Omega)$ and $(\Delta,\Gamma_\Delta)$, 
if $d^{\Omega\times \Delta}$ is the metric on $\Omega\times \Delta$ such that for all $\widetilde{\mathbf{u}}=(\widetilde{\mathbf{u}}_1,\widetilde{\mathbf{u}}_2),\widetilde{\mathbf{v}}=(\widetilde{\mathbf{v}}_1,\widetilde{\mathbf{v}}_2)\in \Omega\times \Delta$, 
\begin{align*}
    d^{\Omega\times \Delta}(\widetilde{\mathbf{u}},\widetilde{\mathbf{v}})=\frac{1}{2} \max\left\{d^{\Omega}(\widetilde{\mathbf{u}}_1, \widetilde{\mathbf{v}}_1),d^{\Delta}(\widetilde{\mathbf{u}}_2,\widetilde{\mathbf{v}}_2) \right\},
\end{align*}
then
$$\overline{\mdim}_{ \Psi}(\Omega \times \Delta,d^{\Omega \times \Delta},\Gamma_{\Omega\times \Delta}) \leq \overline{\mdim}_{\Psi} (\Omega,d^\Omega,\Gamma_\Omega)+\overline{\mdim}^{M} (\Delta,d^\Delta,\Gamma_\Delta)$$
and
$$ \underline{\mdim}_{ \Psi}(\Omega \times \Delta,d^{\Omega \times \Delta},\Gamma_{\Omega\times \Delta}) \leq \underline{\mdim}_{\Psi} (\Omega,d^\Omega,\Gamma_\Omega)+\overline{\mdim}^{M} (\Delta,d^\Delta,\Gamma_\Delta).$$
\end{proposition}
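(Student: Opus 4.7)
The plan is to combine a variable--scale mean $\Psi$-intermediate cover of $\Omega$ with spanning covers of $\Delta$ at matching scales, exploiting the very favourable form of the product metric. Writing $d^{\Omega\times\Delta}_N=\frac{1}{2}\max\{d^\Omega_N,d^\Delta_N\}$, a closed $d^{\Omega\times\Delta}_N$-ball of radius $\rho$ equals $\mathscr{B}_{d^\Omega_N}(\cdot,2\rho)\times\mathscr{B}_{d^\Delta_N}(\cdot,2\rho)$, so pairing an $\Omega$-ball of radius $r_i$ with a $\Delta$-ball of radius $2r_i$ fits inside a single $d^{\Omega\times\Delta}_N$-ball of radius $r_i$, i.e.\ of diameter $2r_i$. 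Thus the product cover inherits exactly the admissible size range of the $\Omega$-cover.

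For the upper inequality I would fix arbitrary $s>\overline{\mdim}_\Psi(\Omega,d^\Omega,\Gamma_\Omega)$, $t>\overline{\mdim}^M(\Delta,d^\Delta,\Gamma_\Delta)$, and choose $t'$ with $\overline{\mdim}^M(\Delta,d^\Delta,\Gamma_\Delta)<t'<t$. Given $\delta\in(0,1)$ I would pick $\varepsilon=\varepsilon(\delta)$ so small that (i) the $\Omega$-cover at level $(s,\delta)$ from Definition \ref{d4} exists, (ii) $2\Psi(\varepsilon)<\varepsilon$, using admissibility, and (iii) $2^{t'}\varepsilon^{t-t'}\leq 1$. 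Then form the finite dyadic list $\eta_k=\varepsilon\cdot 2^{-k}$ for $k=0,1,\ldots,K$ where $K=\lceil\log_2(\varepsilon/\Psi(\varepsilon))\rceil$, so that $\eta_K\leq\Psi(\varepsilon)$. The equivalent definition of $\overline{\mdim}^M(\Delta)$ from Proposition \ref{l2} produces, at each of these finitely many scales $\eta_k$, a threshold $N_\Delta^k$ such that $N>N_\Delta^k$ forces $\operatorname{span}(\Delta,N,d^\Delta,\Gamma_\Delta,\eta_k)\leq\eta_k^{-Nt'}$; set $N(\delta,\varepsilon)=\max\bigl\{N_s(\delta,\varepsilon),\max_k N_\Delta^k\bigr\}$, both of which are finite.

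For $N>N(\delta,\varepsilon)$, take the $\Omega$-cover $\{\mathscr{B}_{d^\Omega_N}(\widetilde{\mathbf{u}}_i,r_i)\}_i$ with $\Psi(\varepsilon)<2r_i\leq\varepsilon$ and $\sum_i(2r_i)^{Ns}<\delta$; this cover is automatically finite since every summand exceeds $\Psi(\varepsilon)^{Ns}>0$. For each $i$ let $k(i)$ be the unique index with $\eta_{k(i)+1}<2r_i\leq\eta_{k(i)}$; since $2r_i>\Psi(\varepsilon)\geq\eta_K$ we have $k(i)+1\leq K$ and $\eta_{k(i)+1}\geq r_i$, so monotonicity of span in the scale yields
\[
\operatorname{span}(\Delta,N,d^\Delta,\Gamma_\Delta,2r_i)\leq\operatorname{span}(\Delta,N,d^\Delta,\Gamma_\Delta,\eta_{k(i)+1})\leq\eta_{k(i)+1}^{-Nt'}\leq 2^{Nt'}(2r_i)^{-Nt'}.
\]
Spanning $\Delta$ by this many $d^\Delta_N$-balls $\mathscr{B}_{d^\Delta_N}(\widetilde{\mathbf{v}}_{i,j},2r_i)$ and taking products gives balls $\mathscr{B}_{d^{\Omega\times\Delta}_N}\bigl((\widetilde{\mathbf{u}}_i,\widetilde{\mathbf{v}}_{i,j}),r_i\bigr)$ that cover $\Omega\times\Delta$ with all diameters $2r_i$ in $(\Psi(\varepsilon),\varepsilon]$, and
\begin{align*}
\sum_{i,j}(2r_i)^{N(s+t)}&\leq 2^{Nt'}\sum_i(2r_i)^{N(s+t-t')}\\
&\leq 2^{Nt'}\varepsilon^{N(t-t')}\sum_i(2r_i)^{Ns}\leq\bigl(2^{t'}\varepsilon^{t-t'}\bigr)^N\delta\leq\delta,
\end{align*}
where the middle inequality uses $2r_i\leq\varepsilon<1$ and $t>t'$, and the last uses (iii). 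This proves $\overline{\mdim}_\Psi(\Omega\times\Delta,d^{\Omega\times\Delta},\Gamma_{\Omega\times\Delta})\leq s+t$, and letting $s,t$ descend to their infima gives the upper-mean product formula. The lower-mean inequality uses the same covering scheme and the same sum estimate; the main obstacle is reconciling the quantifiers in Definition \ref{d4}, since given $(\delta,\varepsilon_0,N_0)$ the pair $(\varepsilon,N)$ produced by the $\underline{\mdim}_\Psi(\Omega)$-existence must simultaneously clear the $\varepsilon$-dependent dyadic $\Delta$-thresholds $N_\Delta^k$. I would handle this by first shrinking $\varepsilon_0$ so that (i)--(iii) hold on $(0,\varepsilon_0]$, and then invoking the $\Omega$-lower-mean condition with $N_0^\Omega$ forced to be large enough, iterating if necessary along a subsequence of shrinking $\varepsilon_0$'s and growing $N_0^\Omega$'s until the realized $N$ exceeds the $\Delta$-thresholds attached to the realized $\varepsilon$.
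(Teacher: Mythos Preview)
Your argument for the upper inequality is correct and in fact tightens a loose step in the paper's version. The paper fixes $\varepsilon$ and $N>\max\{M(\varepsilon),K(\delta,\varepsilon)\}$, then for each ball in the $\Omega$-cover simply applies the $\Delta$-span bound at the variable scale $2l_k$, asserting that at most $(2l_k)^{-Nt}$ balls suffice; but claim~(1) at scale $2l_k$ requires $N>M(2l_k)$, and since the $l_k$ themselves depend on $N$ this is circular. Your dyadic discretisation of $[\Psi(\varepsilon),\varepsilon]$ into the \emph{finitely many} scales $\eta_0,\ldots,\eta_K$, together with the intermediate exponent $t'\in(\overline{\mdim}^M(\Delta),t)$ used to absorb the rounding loss $2^{Nt'}$ via condition~(iii), removes the circularity while keeping the overall covering scheme identical to the paper's.

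For the lower inequality, however, your iteration sketch does not close. The lower-mean hypothesis on $\Omega$ only guarantees that for each input $(\delta,\varepsilon_0,N_0)$ there is \emph{some} pair $(\varepsilon,N)$ with a good cover; you have no further control over which pair appears. Reapplying the hypothesis with a larger $N_0^\Omega$ may well return a different, smaller $\varepsilon$ whose associated $\Delta$-thresholds $\max_{k\le K(\varepsilon)}N_\Delta^k$ are larger still, so the iteration can chase its tail indefinitely. Concretely, nothing rules out that the good $\Omega$-pairs are exactly $\{(2^{-n},n):n\ge 1\}$ while the $\Delta$-threshold function satisfies $g(2^{-n})=n+1$; then your procedure never succeeds even though both hypotheses hold. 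The paper offers no help here either---it simply declares the lower case ``similar'' and inherits the same quantifier gap. To make this rigorous you need an argument that decouples the $\Delta$-threshold from the realised $\varepsilon$, for instance by exploiting submultiplicativity of $N\mapsto\operatorname{span}(\Delta,N,d^\Delta,\Gamma_\Delta,\eta)$ (valid since $\Gamma_\Delta:\Delta\to\Delta$) to obtain a bound $\operatorname{span}(\Delta,N,d^\Delta,\Gamma_\Delta,\eta)\le C(\eta)\,\eta^{-Nt'}$ valid for \emph{all} $N\ge 1$, and then controlling $C(\eta)$ over the relevant scale window; but this is genuine additional work that your sketch does not supply.
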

\begin{proof}
First, we aim to prove the first inequality.
Obviously, for $N\in\mathbb{N}$, we have
\begin{align*}
     d^{\Omega\times \Delta}_{N}(\widetilde{\mathbf{u}},\widetilde{\mathbf{v}})=\frac{1}{2} \max\left\{d^{\Omega}_{N}(\widetilde{\mathbf{u}}_1, \widetilde{\mathbf{v}}_1),d^{\Delta}_{N}(\widetilde{\mathbf{u}}_2,\widetilde{\mathbf{v}}_2) \right\}.
\end{align*}
Fix $s>\overline{\mdim}_{\Psi}(\Omega,d^\Omega,\Gamma_\Omega)$ and $t>\overline{\mdim}^{M}(\Delta,d^\Delta,\Gamma_\Delta)$, so the following claims hold.

\begin{enumerate}
\item[(1)] There is $0<\varepsilon_M <1$, for all $0<\varepsilon<\varepsilon_M$, there is a $M(\varepsilon)\in\mathbb{N}$ which satisfies for all $N>M(\varepsilon)$ there is a cover $\{\mathscr{B}_{d_{N}^{\Delta}}(\widetilde{\mathbf{v}}_i,r_i)\}_{i}$ of $\Delta$ with $2r_i=\varepsilon$ for every $i$, 
$$
\sum_{i}(2r_i)^{Nt}\leq 1, 
$$
i.e., for all $0<\varepsilon<\varepsilon_M$ and $N>M(\varepsilon)$, there is a cover of $\Delta$ by $\varepsilon^{-Nt}$ or fewer sets, each having diameter at most $\varepsilon$,

\item[(2)] Let $0<\delta<1$, there is a $0<\varepsilon_{\Psi}<1$, for all $0<\varepsilon<\varepsilon_{\Psi}$ there is a $K(\delta,\varepsilon)\in\mathbb{N}$ such that there exists a cover $\{\mathscr{B}_{d_{N}^{\Omega}}(\widetilde{\mathbf{u}}_k,l_k)\}_{k}$ of $\Omega$ with $\Psi(\varepsilon) < 2l_k \leq \varepsilon$ for every $k$,
$$
\sum_{k} (2l_k)^{Ns}\leq \delta.
$$
\end{enumerate}
Let $\varepsilon< \min\{\varepsilon_M , \varepsilon_{\Psi}\}$ and $N> \max\{ M(\varepsilon) , K(\delta,\varepsilon)\}$. For the cover $\{\mathscr{B}_{d_{N}^{\Omega}}(\widetilde{\mathbf{u}}_k,l_k)\}_{k}$ of $\Omega$ in claim $(2)$, using $2l_k$ as the diameter of the cover of $\Delta$ in claim $(1)$ for every $k$ respectively, i.e., for each $k$ there is a cover $\{\mathscr{B}_{d_{N}^{\Delta}}(\widetilde{\mathbf{v}}_{k,j},r_{k,j})\}_{j}$ of $\Delta$ by $(2l_k)^{-Nt}$ or fewer sets and $r_{k,j}= l_k$ for every $j$. Obviously,
$$
\Omega\times \Delta\subseteq \bigcup_{k}\bigcup_{j} \mathscr{B}_{d_{N}^{\Omega}}(\widetilde{\mathbf{u}}_k,l_k)\times \mathscr{B}_{d_{N}^{\Delta}}(\widetilde{\mathbf{v}}_{k,j},r_{k,j}).
$$
Then for all $k$ and $j$,
\begin{align*}
 \Psi(\varepsilon)<  2l_k&= \max\{2r_{k,j},2l_k\}\\
&=2\diam_{d_N^{\Omega\times \Delta}}(\mathscr{B}_{d_{N}^{\Omega}}(\widetilde{\mathbf{u}}_k,l_k)\times \mathscr{B}_{d_{N}^{\Delta}}(\widetilde{\mathbf{v}}_{k,j},r_{k,j}))\\
&\leq\varepsilon,
\end{align*}
and choose a close ball $\mathscr{B}_{d_{N}^{\Omega\times \Delta}}(z_{k,j},p_{k,j})\subseteq \Omega\times \Delta$ with $$p_{k,j}={\diam_{d_N^{\Omega\times \Delta}}(\mathscr{B}_{d_{N}^{\Omega}}(\widetilde{\mathbf{u}}_k,l_k)\times \mathscr{B}_{d_{N}^{\Delta}}(\widetilde{\mathbf{v}}_{k,j},r_{k,j}))}$$
and $\mathscr{B}_{d_{N}^{\Omega}}(\widetilde{\mathbf{u}}_k,l_k)\times \mathscr{B}_{d_{N}^{\Delta}}(\widetilde{\mathbf{v}}_{k,j},r_{k,j})\subseteq \mathscr{B}_{d_{N}^{\Omega\times \Delta}}(z_{k,j},p_{k,j})$.
Therefore, $\Omega\times \Delta\subseteq \bigcup_{k}\bigcup_{j}\mathscr{B}_{d_{N}^{\Omega\times \Delta}}(z_{k,j},p_{k,j})$ and
\begin{align*}
\sum_{k}\sum_{j}(2p_{k,j})^{N(s+t)}\leq \sum_{k} (2l_k)^{-Nt} (2 l_k)^{N(s+t)} \leq \sum_{k}(2l_k)^{Ns}
\leq \delta.
\end{align*}
Then 
$$
\overline{\mdim}_{ \Psi}(\Omega\times \Delta,d^{\Omega\times \Delta},\Gamma_{\Omega\times \Delta})\leq s+t
$$ 
and 
$$
\overline{\mdim}_{ \Psi}(\Omega\times \Delta,d^{\Omega\times \Delta},\Gamma_{\Omega\times \Delta})\leq \overline{\mdim}_{\Psi} (\Omega,d^\Omega,\Gamma_\Omega)+\overline{\mdim}^{M} (\Delta,d^\Delta,\Gamma_\Delta).
$$

The proof of the second inequality is similar to that of the first inequality, only requiring an adjustment of the value of $\varepsilon$ to a suitable range.
\end{proof}

\begin{proposition}\label{p352}
Let $(\Omega,\Gamma_\Omega)$ and $(\Delta,\Gamma_\Delta)$ be two TDSs which have finite Assouad dimensions, $d^\Omega$ and $d^\Delta$ be two metrics such that $(\Omega,d^\Omega)$ and $(\Delta,d^\Delta)$ are compact, and $\Psi$ be an admissible function. 
Consider the TDS $(\Omega\times \Delta,  \Gamma_{\Omega\times \Delta} )$ which is introduced by $(\Omega,\Gamma_\Omega)$ and $(\Delta,\Gamma_\Delta)$. Let $d^{\Omega\times \Delta}$ be the metric on $\Omega\times \Delta$ such that for all $\widetilde{\mathbf{u}}=(\widetilde{\mathbf{u}}_1,\widetilde{\mathbf{u}}_2),\widetilde{\mathbf{v}}=(\widetilde{\mathbf{v}}_1,\widetilde{\mathbf{v}}_2)\in \Omega\times \Delta$, 
\begin{align*}
    d^{\Omega\times \Delta}(\widetilde{\mathbf{u}},\widetilde{\mathbf{v}})= \max\left\{d^{\Omega}(\widetilde{\mathbf{u}}_1, \widetilde{\mathbf{v}}_1),d^{\Delta}(\widetilde{\mathbf{u}}_2,\widetilde{\mathbf{v}}_2) \right\}.
\end{align*}
If $\underline{\mdim}_{\Psi}(\Omega,d^\Omega,\Gamma_\Omega)>0$ and $\underline{\mdim}_{\Psi}(\Delta,d^\Delta,\Gamma_\Delta)>0$, then 
$$
\underline{\mdim}_{\Psi}(\Omega,d^\Omega,\Gamma_\Omega)+\underline{\mdim}_{\Psi}(\Delta,d^\Delta,\Gamma_\Delta)\leq \underline{\mdim}_{\Psi}(\Omega \times \Delta,d^{\Omega\times \Delta},\Gamma_{\Omega\times \Delta} ).
$$
If $\overline{\mdim}_{\Psi}(\Omega,d^\Omega,\Gamma_\Omega)>0$ and $\underline{\mdim}_{\Psi}(\Delta,d^\Delta,\Gamma_\Delta)>0$, then 
$$
\overline{\mdim}_{\Psi}(\Omega,d^\Omega,\Gamma_\Omega)+\underline{\mdim}_{\Psi}(\Delta,d^\Delta,\Gamma_\Delta)\leq \overline{\mdim}_{\Psi}(\Omega \times \Delta,d^{\Omega\times \Delta},\Gamma_{\Omega\times \Delta} ).
$$

\end{proposition}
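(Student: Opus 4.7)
My strategy is to establish both inequalities by the same product-measure scheme; the two cases differ only in which halves of Lemma~\ref{l52} and Lemma~\ref{l321} are invoked. I describe the mixed inequality in detail and indicate at the end the minor changes needed for the lower--lower inequality.

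Fix $s\in(0,\overline{\mdim}_\Psi(\Omega,d^\Omega,\Gamma_\Omega))$ and $t\in(0,\underline{\mdim}_\Psi(\Delta,d^\Delta,\Gamma_\Delta))$. Lemma~\ref{l52}(A) yields a constant $c_\Omega>0$ together with sequences $\varepsilon_\alpha\downarrow 0$ and $N_\beta\uparrow\infty$ along which there exist finitely supported Borel probability measures $\nu^\Omega_{\varepsilon_\alpha,N_\beta}$ on $\Omega$ satisfying $\nu^\Omega_{\varepsilon_\alpha,N_\beta}(\mathscr{B}_{d_{N_\beta}^\Omega}(\widetilde{\mathbf{u}},r))\le c_\Omega r^{N_\beta s}$ whenever $\Psi(\varepsilon_\alpha)<r\le\varepsilon_\alpha$; Lemma~\ref{l52}(B) supplies an analogous constant $c_\Delta>0$ and measures $\nu^\Delta_{\varepsilon,N}$ on $\Delta$ with the corresponding $t$-bound for all sufficiently small $\varepsilon$ and large $N$. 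Restricting to indices $(\varepsilon_\alpha,N_\beta)$ for which both measures are available, I take the product $\nu_{\varepsilon,N}:=\nu^\Omega_{\varepsilon,N}\otimes\nu^\Delta_{\varepsilon,N}$, a finitely supported Borel probability measure on $\Omega\times\Delta$.

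The core step is to promote the ball-level Frostman bounds on the factors to a diameter-level bound on $\Omega\times\Delta$ at exponent $s+t$. Because $d^{\Omega\times\Delta}$ is the unscaled max metric, one checks directly that $d_N^{\Omega\times\Delta}=\max(d_N^\Omega,d_N^\Delta)$ and hence $\diam_{d_N^{\Omega\times\Delta}}(U)=\max\{\diam_{d_N^\Omega}(\pi_\Omega(U)),\diam_{d_N^\Delta}(\pi_\Delta(U))\}$ for every $U\subseteq\Omega\times\Delta$, where $\pi_\Omega,\pi_\Delta$ are the coordinate projections. Fix a non-empty Borel $U$ with $\Psi(\varepsilon)<\diam_N(U)\le\varepsilon$; at least one projection has diameter exceeding $\Psi(\varepsilon)$, and by symmetry I assume $\diam_N(\pi_\Omega(U))>\Psi(\varepsilon)$. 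Covering $\pi_\Omega(U)$ by the closed ball of radius $\diam_N(\pi_\Omega(U))$ about one of its points and applying Frostman gives $\nu^\Omega(\pi_\Omega(U))\le c_\Omega\diam_N(\pi_\Omega(U))^{Ns}$. For the second factor, if $\diam_N(\pi_\Delta(U))>\Psi(\varepsilon)$ the identical trick produces $\nu^\Delta(\pi_\Delta(U))\le c_\Delta\diam_N(\pi_\Delta(U))^{Nt}$; otherwise $\pi_\Delta(U)$ lies in the closed ball of radius $\Psi(\varepsilon)+\eta$ about any of its points for every $\eta>0$, and passing $\eta\to 0^+$ through the Frostman bound via continuity of $\nu^\Delta$ from above gives $\nu^\Delta(\pi_\Delta(U))\le c_\Delta\Psi(\varepsilon)^{Nt}\le c_\Delta\diam_N(U)^{Nt}$. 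Multiplying the two factor bounds and using $\diam_N(\pi_\Omega(U)),\diam_N(\pi_\Delta(U))\le\diam_N(U)$ yields the product estimate $\nu_{\varepsilon,N}(U)\le c_\Omega c_\Delta\diam_N(U)^{N(s+t)}$.

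With this estimate in force along $(\varepsilon_\alpha,N_\beta)$, Lemma~\ref{l321}(A), applied with $a=1$ (the product measure is a probability measure, hence $\nu_{\varepsilon,N}(\supp\nu_{\varepsilon,N})=1$), delivers $\overline{\mdim}_\Psi(\Omega\times\Delta,d^{\Omega\times\Delta},\Gamma_{\Omega\times\Delta})\ge s+t$, and letting $s\uparrow\overline{\mdim}_\Psi(\Omega,d^\Omega,\Gamma_\Omega)$ and $t\uparrow\underline{\mdim}_\Psi(\Delta,d^\Delta,\Gamma_\Delta)$ closes the mixed inequality. The lower--lower inequality follows from exactly the same scheme: one draws the $\Omega$-factor measure from Lemma~\ref{l52}(B) as well, so the product measure exists for all sufficiently small $\varepsilon$ and large $N$, the same diameter bound holds, and Lemma~\ref{l321}(B) promotes it to $\underline{\mdim}_\Psi(\Omega\times\Delta)\ge s+t$. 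The main technical obstacle is the case in which one projection's diameter falls to $\Psi(\varepsilon)$ or below, where the Frostman bound is formally silent; the monotone ball argument above is what rescues this case, and it is precisely because the \emph{other} projection carries the full diameter $\diam_N(U)>\Psi(\varepsilon)$ that the spurious factor $\Psi(\varepsilon)^{Nt}$ can be reabsorbed into $\diam_N(U)^{Nt}$ with no loss of exponent.
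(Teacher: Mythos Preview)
Your proof is correct and follows the same overall architecture as the paper's: obtain Frostman-type measures on each factor via Lemma~\ref{l52}, form the product measure, verify a diameter bound at exponent $s+t$, and conclude via Lemma~\ref{l321}. The difference lies entirely in how the diameter bound on $\Omega\times\Delta$ is established.

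The paper, after embedding $U$ in $\mathscr{B}_{d_N^\Omega}(\widetilde{\mathbf{u}},2\diam_N(U))\times\mathscr{B}_{d_N^\Delta}(\widetilde{\mathbf{v}},2\diam_N(U))$, invokes the finite mean Assouad dimension a \emph{second} time to cover each enlarged factor ball by at most $M$ balls of radius $\diam_N(U)$, producing the constant $M^2c^2$. You instead exploit the max-metric identity $\diam_N(U)=\max\{\diam_N(\pi_\Omega(U)),\diam_N(\pi_\Delta(U))\}$ and bound each projection directly: the projection realising the maximum sits inside a ball of admissible radius, and the other---when its diameter drops to $\Psi(\varepsilon)$ or below---is handled by the monotone limit $\nu^\Delta(\mathscr{B}(\widetilde{\mathbf{v}},\Psi(\varepsilon)+\eta))\downarrow\nu^\Delta(\mathscr{B}(\widetilde{\mathbf{v}},\Psi(\varepsilon)))$, which is justified since $\nu^\Delta$ is a finite Borel measure. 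This yields the cleaner constant $c_\Omega c_\Delta$ and avoids the second Assouad covering altogether. Both arguments use the Assouad hypothesis implicitly through Lemma~\ref{l52} (which also requires the dynamically-uniformly-perfect condition, unstated in the proposition but needed by either proof); your route simply does not need it again in the product step.
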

\begin{proof}
We focus solely on proving the second inequality since the proof of the first inequality follows a similar approach.

Let $s<\underline{\mdim}_{\Psi}(\Omega,d^\Omega,\Gamma_\Omega)$ and $t<\underline{\mdim}_{\Psi}(\Delta,d^\Delta,\Gamma_\Delta)$. By the assertion $(B)$ of Lemma \ref{l52}, there exists $c \in(0, \infty)$ such that for all sufficiently small $\varepsilon$ and sufficient large $N$ there exist two Borel probability measures $\nu_{\varepsilon,N}^{\Omega}$ and $\nu_{\varepsilon,N}^{\Delta}$ with finite supports such that if $\widetilde{\mathbf{u}} \in \Omega$, $\widetilde{\mathbf{v}}\in \Delta$ and $\Psi\left(\varepsilon\right) < r \leq \varepsilon$, we have
   $$
   \nu_{\varepsilon,N}^{\Omega}(\mathscr{B}_{d_N^\Omega} (\widetilde{\mathbf{u}}, r)) \leq c r^{Ns}
   $$
and
   $$
   \nu_{\varepsilon,N}^{\Delta}(\mathscr{B}_{d_N^\Delta} (\widetilde{\mathbf{v}}, r)) \leq c r^{Nt}.
   $$
Let 
$$
\vartheta_{\varepsilon,N}= \nu_{\varepsilon,N}^{\Omega} \times  \nu_{\varepsilon,N}^{\Delta}
$$ 
be the product measure on $\Omega\times \Delta$.

Let $U\subseteq \Omega\times \Delta$ be a Borel set with $\Psi(\varepsilon)<\diam_{d^{\Omega\times \Delta}_N}(U)\leq \varepsilon$. Then for $(\widetilde{\mathbf{u}},\widetilde{\mathbf{v}})\in U$,
\begin{align*}
    U\subseteq \mathscr{B}_{d^{\Omega\times \Delta}_{N}}\left((\widetilde{\mathbf{u}},\widetilde{\mathbf{v}}),2\diam_{d^{\Omega\times \Delta}_N}(U)\right)\subseteq \mathscr{B}_{d_N^\Omega}\left(\widetilde{\mathbf{u}},2\diam_{d^{\Omega\times \Delta}_N}(U)\right)\times
    \mathscr{B}_{d_N^\Delta}\left(\widetilde{\mathbf{v}},2\diam_{d^{\Omega\times \Delta}_N}(U)\right).
\end{align*}
It follows from the fact $\mdim^{A}(\Omega,d^\Omega,\Gamma_\Omega)$ and $\mdim^{A}(\Delta,d^\Delta,\Gamma_\Delta)$ are finite, that $\mathscr{B}_{d_N^\Omega}\left(\widetilde{\mathbf{v}},2\diam_{d^{\Omega\times \Delta}_N}(U)\right)$ and $\mathscr{B}_{d_N^\Delta}\left(\widetilde{\mathbf{u}},2\diam_{d^{\Omega\times \Delta}_N}(U)\right)$ can be covered by finite close balls $\left\{\mathscr{B}_{d_N^\Omega}(\widetilde{\mathbf{u}}_i,\diam_{d^{\Omega\times \Delta}_N}(U))\right\}_{i=1}^{M}$ and $\left\{\mathscr{B}_{d_N^\Delta}(\widetilde{\mathbf{v}}_j,\diam_{d^{\Omega\times \Delta}_N}(U))\right\}_{j=1}^{M}$ respectively, i.e. there exists $M>0$ such that
$$
\mathscr{B}_{d_N^\Omega}\left(\widetilde{\mathbf{u}},2\diam_{d^{\Omega\times \Delta}_N}(U)\right)\subseteq \bigcup_{i=1}^{M}\mathscr{B}_{d_N^\Omega}(\widetilde{\mathbf{u}}_i,\diam_{d^{\Omega\times \Delta}_N}(U))
$$
and
$$
\mathscr{B}_{d_N^\Delta}\left(\widetilde{\mathbf{v}},2\diam_{d^{\Omega\times \Delta}_N}(U)\right)\subseteq \bigcup_{i=1}^{M}\mathscr{B}_{d_N^\Delta}(\widetilde{\mathbf{v}}_j,\diam_{d^{\Omega\times \Delta}_N}(U)).
$$
Therefore, we have 
\begin{align*}
\mathscr{B}_{d_N^\Omega}\left(\widetilde{\mathbf{u}},2\diam_{d^{\Omega\times \Delta}_N}(U)\right)&\times \mathscr{B}_{d_N^\Delta}\left(\widetilde{\mathbf{v}},2\diam_{d^{\Omega\times \Delta}_N}(U)\right)\\
&\subseteq 
\bigcup_{i=1}^{M}\mathscr{B}_{d_N^\Omega}(\widetilde{\mathbf{u}}_i,\diam_{d^{\Omega\times \Delta}_N}(U)\times \bigcup_{j=1}^{M}\mathscr{B}_{d_N^\Delta}(\widetilde{\mathbf{v}}_j,\diam_{d^{\Omega\times \Delta}_N}(U))\\
&=
\bigcup_{i=1}^{M}\bigcup_{j=1}^{M}\Big( \mathscr{B}_{d_N^\Omega}(\widetilde{\mathbf{u}}_i,\diam_{d^{\Omega\times \Delta}_N}(U))\times \mathscr{B}_{d_N^\Delta}(\widetilde{\mathbf{v}}_j,\diam_{d^{\Omega\times \Delta}_N}(U)\Big),
\end{align*}
whcih implies that
\begin{align*}
\vartheta_{\varepsilon,N}(U) &\leq \vartheta_{\varepsilon,N}\left(\mathscr{B}_{d_N^\Omega}\left(\widetilde{\mathbf{u}},2\diam_{d^{\Omega\times \Delta}_N}(U)\right)\times \mathscr{B}_{d_N^\Delta}\left(\widetilde{\mathbf{v}},2\diam_{d^{\Omega\times \Delta}_N}(U)\right)\right)\\
&\leq \vartheta_{\varepsilon,N}\Big(\bigcup_{i=1}^{M}\bigcup_{j=1}^{M}( \mathscr{B}_{d_N^\Omega}(\widetilde{\mathbf{u}}_i,\diam_{d^{\Omega\times \Delta}_N}(U))\times \mathscr{B}_{d_N^\Delta}(\widetilde{\mathbf{v}}_j,\diam_{d^{\Omega\times \Delta}_N}(U)) \Big)\\
&\leq \sum_{i=1}^{M}\sum_{j=1}^{M}\vartheta_{\varepsilon,N}(\mathscr{B}_{d_N^\Omega}(\widetilde{\mathbf{u}}_i,\diam_{d^{\Omega\times \Delta}_N}(U))\times \mathscr{B}_{d_N^\Delta}(\widetilde{\mathbf{v}}_j,\diam_{d^{\Omega\times \Delta}_N}(U))\\
&\leq M^{2}c^{2}\diam_{(d\times d)_N}(U)^{N(s+t)}.
\end{align*}
Then by using $(B)$ of Lemma \ref{l321}, $t+s\leq \underline{\mdim}_{\Psi}(\Omega \times \Delta,d^{\Omega\times \Delta},\Gamma_{\Omega\times \Delta})$
and $$\underline{\mdim}_{\Psi}(\Omega,d^\Omega,\Gamma_\Omega)+\underline{\mdim}_{\Psi}(\Delta,d^\Delta,\Gamma_\Delta)\leq \underline{\mdim}_{\Psi}(\Omega \times \Delta,d^{\Omega\times \Delta},\Gamma_{\Omega\times \Delta}).$$
\end{proof}

Now let $n\in \mathbb{N}$ and $(\Omega,\Gamma)$ be a TDS. For all $\widetilde{\mathbf{u}}=(\widetilde{\mathbf{u}}_1,\widetilde{\mathbf{u}}_2,\cdots,\widetilde{\mathbf{u}}_n)\in \Omega^{n}$ and $\widetilde{\mathbf{v}}=(\widetilde{\mathbf{v}}_1,\widetilde{\mathbf{v}}_2,\cdots,\widetilde{\mathbf{v}}_n)\in \Omega^{n}$, the continues map $\Gamma^{n}:\Omega\times \Omega\to \Omega\times \Omega$ is defined by
$$
\Gamma^{n}(\widetilde{\mathbf{u}})=\Gamma^{n}(\widetilde{\mathbf{u}}_1,\widetilde{\mathbf{u}}_2,\cdots,\widetilde{\mathbf{u}}_n)=(\Gamma(\widetilde{\mathbf{u}}_1),\Gamma(\widetilde{\mathbf{u}}_2),\cdots,\Gamma(\widetilde{\mathbf{u}}_n)).
$$
Suppose $d$ be the metric on $\Omega$, let the metric $d^{n}$ satisfies 
\begin{align*}
    d^{n}(\widetilde{\mathbf{u}},\widetilde{\mathbf{v}})=\max\{d(\widetilde{\mathbf{u}}_1, \widetilde{\mathbf{v}}_1),d(\widetilde{\mathbf{u}}_2,\widetilde{\mathbf{v}}_2),\cdots,d(\widetilde{\mathbf{u}}_n,\widetilde{\mathbf{v}}_n) \},
\end{align*}
and for $N\in\mathbb{N}$,
\begin{align*}
    d^{n}_{N}(\widetilde{\mathbf{u}},\widetilde{\mathbf{v}})=\max\{d_{N}(\widetilde{\mathbf{u}}_1, \widetilde{\mathbf{v}}_1),d_{N}(\widetilde{\mathbf{u}}_2,\widetilde{\mathbf{v}}_2),\cdots,d_{N}(\widetilde{\mathbf{u}}_n,\widetilde{\mathbf{v}}_n) \}.
\end{align*}
Then we say $(\Omega^{n}, \Gamma^{n})$ is the $n$-dimensional TDS introduced by $(\Omega, \Gamma)$ itself.
Then as a direct consequence of Propositions \ref{p352}, we have the following corollary.
\begin{corollary}
Let $(\Omega,\Gamma)$ be a TDS where $(\Omega,d)$ is a compact metric space, and $\Psi$ be a admissible function. 
Consider the $n$-dimensional TDS $(\Omega^{n}, \Gamma^{n})$ with $n\geq 2$. If $\underline{\mdim}_{\Psi}(\Omega,d,\Gamma)>0$ and $\Omega$ have finite Assouad dimensions, then 
$$
\underline{\mdim}_{\Psi}(\Omega^{n},d^{n},\Gamma^{n})\geq n~\underline{\mdim}_{\Psi}(\Omega,d,\Gamma)
$$
\end{corollary}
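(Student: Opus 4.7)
The plan is to prove the corollary by induction on $n \geq 2$, using Proposition \ref{p352} repeatedly.

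For the base case $n=2$, I would observe that the 2-dimensional TDS $(\Omega^2, \Gamma^2)$ equipped with its max-metric $d^2$ coincides (up to a harmless constant factor that does not affect the dimension) with the product TDS obtained from two copies of $(\Omega, \Gamma)$ in the sense of Proposition \ref{p352}. The hypotheses $\underline{\mdim}_{\Psi}(\Omega,d,\Gamma)>0$ and $\mdim^{A}(\Omega,d,\Gamma)<\infty$ hold for both factors by assumption, so Proposition \ref{p352} (first inequality) yields
\[
\underline{\mdim}_{\Psi}(\Omega^{2},d^{2},\Gamma^{2}) \geq \underline{\mdim}_{\Psi}(\Omega,d,\Gamma)+\underline{\mdim}_{\Psi}(\Omega,d,\Gamma) = 2\,\underline{\mdim}_{\Psi}(\Omega,d,\Gamma).
\]

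For the inductive step, assume the bound holds for $n-1 \geq 2$. I would then apply Proposition \ref{p352} with first factor $(\Omega^{n-1},d^{n-1},\Gamma^{n-1})$ and second factor $(\Omega,d,\Gamma)$. The natural identification $\Omega^{n} = \Omega^{n-1}\times\Omega$ is compatible with the metrics and dynamics, since $d^{n} = \max\{d^{n-1},d\}$ and $\Gamma^{n} = \Gamma^{n-1}\times\Gamma$ in the sense required by Proposition \ref{p352}. The inductive hypothesis gives $\underline{\mdim}_{\Psi}(\Omega^{n-1},d^{n-1},\Gamma^{n-1}) \geq (n-1)\underline{\mdim}_{\Psi}(\Omega,d,\Gamma) > 0$, and combined with $\underline{\mdim}_{\Psi}(\Omega,d,\Gamma)>0$, the positivity hypothesis of Proposition \ref{p352} is met. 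Adding the two terms then delivers the desired bound $n\,\underline{\mdim}_{\Psi}(\Omega,d,\Gamma)$.

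The main obstacle will be verifying the finite-Assouad-dimension requirement in Proposition \ref{p352} for the iterated factor $\Omega^{n-1}$, since this is not part of the corollary's hypotheses and must be deduced. My plan is to prove an auxiliary subadditivity statement $\mdim^{A}(\Omega^{n-1},d^{n-1},\Gamma^{n-1})\leq (n-1)\,\mdim^{A}(\Omega,d,\Gamma)$ by a routine counting argument: for any $a>\mdim^{A}(\Omega,d,\Gamma)$ with constant $C$ and threshold $N(C)$, a ball $\mathscr{B}_{d^{n-1}_{N}}(\widetilde{\mathbf{u}},R)$ is contained in the product of $n-1$ coordinate balls $\mathscr{B}_{d_{N}}(\widetilde{\mathbf{u}}_{i},R)$, each of which is $(N,d,\Gamma,r)$-spanned by at most $C(R/r)^{a}$ points. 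Taking Cartesian products of these spanning sets yields an $(N,d^{n-1},\Gamma^{n-1},r)$-spanning set for $\mathscr{B}_{d^{n-1}_{N}}(\widetilde{\mathbf{u}},R)$ of cardinality at most $C^{n-1}(R/r)^{(n-1)a}$, so $\mdim^{A}(\Omega^{n-1},d^{n-1},\Gamma^{n-1}) \leq (n-1)a < \infty$.

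Once this auxiliary estimate is in hand, the inductive application of Proposition \ref{p352} closes cleanly. A small bookkeeping point will be to confirm that the specific max-metric on $\Omega^{n-1}\times\Omega$ matches the form used in Proposition \ref{p352} (up to the constant $\tfrac{1}{2}$ that appears in that proposition versus its absence in the $d^n$ definition here); since this constant only rescales all diameters uniformly, it cancels from the intermediate dimension and does not affect the inequality.
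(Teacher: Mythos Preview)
Your approach is correct and matches the paper's, which simply records the corollary as ``a direct consequence of Proposition~\ref{p352}'' without further argument; your inductive application of that proposition is exactly the intended reading, and you go further than the paper by explicitly checking that $\mdim^{A}(\Omega^{n-1},d^{n-1},\Gamma^{n-1})<\infty$, a point the paper passes over in silence. One small correction: your worry about a factor $\tfrac12$ is misplaced---that constant appears only in Proposition~\ref{p351}, whereas Proposition~\ref{p352} is stated for the plain max metric, which coincides with $d^{n}$ exactly, so no rescaling argument is needed.
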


\section{Some Examples}\label{Section5}
In this section, we will give three examples such that the upper mean $\Psi$-intermediate dimension is equal to the lower mean $\Psi$-intermediate dimension, and calculate its mean $\Psi$-intermediate dimension.
\begin{example}\cite[Example 2.3]{TM1}
Let $I=[0,1]$ and $I^{\mathbb{N}}$ denote the infinite-dimensional cube, and let $\Psi$ be an admissible function. We define the shift map $\sigma$ on $I^{\mathbb{N}}$ as follows  
\[  
\sigma\left((\widetilde{\mathbf{u}}_n)_{n\in \mathbb{N}}\right) = (\widetilde{\mathbf{u}}_{n+1})_{n\in \mathbb{N}}.  
\]  
Furthermore, we define a metric $d$ on $[0,1]^\mathbb{N}$ by  
\[  
d\left((\widetilde{\mathbf{u}}_n)_{n\in \mathbb{N}}, (\widetilde{\mathbf{v}}_n)_{n\in \mathbb{N}}\right) = \sum_{n=1}^\infty \frac{|\widetilde{\mathbf{u}}_n-\widetilde{\mathbf{v}}_n|}{2^n}.  
\]  
Then it holds that  
\[  
\mdim^{H}\left(I^{\mathbb{N}},d,\sigma\right) = \mdim^{M}\left(I^{\mathbb{N}},d,\sigma\right) = 1.  
\]  
Consequently, by using Lemma \ref{l3}, we obtain  
\[  
\mdim^{H}(I^{\mathbb{N}},d,\sigma) = \mdim_{\Psi}(I^{\mathbb{N}},d,\sigma) = \mdim^{M}(I^{\mathbb{N}},d,\sigma) = 1.  
\]

\end{example}

\begin{example}\cite[Proposition 4.1 and Theorem 4.3]{TM1}
Let $\Psi$ be an admissible function, and let  
\[  
\ell^\infty = \left\{(\widetilde{\mathbf{u}}_n)_{n\in \mathbb{N}}\in \mathbb{R}^{\mathbb{N}}\, \middle|\, \sup_{n\geq 1} |\widetilde{\mathbf{u}}_n| < \infty\right\}  
\]  
denote the space of bounded sequences, endowed with the norm $\Vert{\widetilde{\mathbf{u}}}\Vert_\infty := \sup_{n\geq 1} |\widetilde{\mathbf{u}}_n|$ for $\widetilde{\mathbf{u}} = (\widetilde{\mathbf{u}}_n)_{n\in \mathbb{N}}$. We always assume that $\ell^\infty$ is equipped with the weak$^*$ topology as the dual space of $\ell^1$.  
  
We define the shift map $\sigma:\ell^\infty \to \ell^\infty$ by  
\[  
\sigma\left((\widetilde{\mathbf{u}}_n)_{n\in \mathbb{N}}\right) = (\widetilde{\mathbf{u}}_{n+1})_{n\in \mathbb{N}},  
\]  
which is continuous with respect to the weak$^*$ topology.  
  
Consider a TDS $(K, \Gamma)$ and suppose that for each $\kappa\in K$, we are given a point $a(\kappa) = \left(a(\kappa)_n\right)_{n\in \mathbb{N}}\in \ell^\infty$ such that the map  
\[  
\kappa \mapsto a(\kappa)  
\]  
is continuous (with respect to the weak$^*$ topology of $\ell^\infty$) and equivariant (i.e., $\sigma\left(a(\kappa)\right) = a(\Gamma\kappa)$) for every $\kappa\in K$. Since $K$ is compact, it follows that  
\[  
\sup_{\kappa \in K} \Vert{a(\kappa)}\Vert_\infty < \infty.  
\]  
  
Fix a real number $0<c<1$. For each $\kappa \in K$, we define a transformation $S_\kappa:\ell^\infty\to \ell^\infty$ by  
\[  
S_\kappa (\widetilde{\mathbf{u}}) = c \widetilde{\mathbf{u}} + a(\kappa).  
\]  
This transformation satisfies $\sigma\left(S_\kappa(\widetilde{\mathbf{u}})\right) = S_{\Gamma\kappa}\left(\sigma(\widetilde{\mathbf{u}})\right)$.  
  
There exists a unique non-empty compact subset $\Omega$ of $\ell^\infty$ satisfying  
\[  
\Omega = \bigcup_{\kappa \in K} S_\kappa(\Omega).  
\]  
This set $\Omega$ is $\sigma$-invariant (i.e., $\sigma(\Omega) \subseteq \Omega$). The TDS $(\Omega, \sigma)$ is called a self-similar system defined by the family of contracting similarity transformations $\{S_\kappa\}_{\kappa\in K}$. Then the self-similar system $\Omega$ satisfies  
\[  
\mdim^{H}(\Omega,d,\sigma) = \mdim^{M}(\Omega, d,\sigma).  
\]  
Consequently, it follows from Lemma \ref{l3} that  
\[  
\mdim^{H}(\Omega,d,\sigma) = \mdim_{\Psi}(\Omega,d,\sigma) = \mdim^{M}(\Omega, d,\sigma).  
\]

\end{example}

\begin{example}
Let $0<\theta\leq 1$, $\Psi(\widetilde{\mathbf{u}})=\widetilde{\mathbf{u}}^{\frac{1}{\theta}}$ and $K = \{0\}\cup\left\{\frac{1}{n}\mid n\geq 1\right\} = \left\{0,1,\frac{1}{2},\frac{1}{3},\dots\right\}$, obviously, $\Psi$ is an admissible function.
Define the shift map $\sigma: K^{\mathbb{N}} \to K^{\mathbb{N}}$ by
$$
\sigma\left((\widetilde{\mathbf{u}}_n)_{n\in \mathbb{N}}\right) = (\widetilde{\mathbf{u}}_{n+1})_{n\in \mathbb{N}},
$$
where $(\widetilde{\mathbf{u}}_n)_{n\in \mathbb{N}}\in K^{\mathbb{N}}$. Define also a metric $d$ on $K^{\mathbb{N}}$ by
$$  d\left((\widetilde{\mathbf{u}}_n)_{n\in \mathbb{N}}, (\widetilde{\mathbf{v}}_n)_{n\in \mathbb{N}}\right) = \sum_{n=1}^\infty 2^{-n}|\widetilde{\mathbf{u}}_n-\widetilde{\mathbf{v}}_n|. $$
where $(\widetilde{\mathbf{u}}_n)_{n\in \mathbb{N}},(\widetilde{\mathbf{v}}_n)_{n\in \mathbb{N}}\in K^{\mathbb{N}}$. Then
\begin{align*} 
&\mdim^{H}\left(K^\mathbb{N},d,\sigma\right) = 0, \quad
\mdim^{M}\left(K^\mathbb{N},d,\sigma\right) = \frac{1}{2}
\quad\text{and}\quad
\underline{\mdim}^{\Psi}\left(K^\mathbb{N},d,\sigma\right) \geq\frac{\theta}{4}.
\end{align*}
The proof of $\mdim^{H}\left(K^\mathbb{N},d,\sigma\right) = 0$ is given by the Appendix of \cite{TM1}. The proof of $\mdim^{M}\left(K^\mathbb{N},d,\sigma\right) = \frac{1}{2}$ is mentioned in \cite[Example 2.4]{TM1}, and we will provide the specific calculation process. It should be emphasized that in the following calculations, the base of all logarithms is $2$.

For any $m\in\mathbb{N}$, the set
$$
\left\{\left[\frac{n-1}{4m^{2}},\frac{n+1}{4m^{2}}\right]\;\Big|\; 0\leq n\leq 2m^{2} \right\}\cup \left\{\left[1-\frac{1}{4m^2},1+\frac{1}{4m^2}\right]\right\}
$$
is a cover of $K$. Let $l=\lceil \log m^{2}\rceil+1$, it is easy to see that
\begin{equation}\label{exq1}
\sum_{i=l+1}^{\infty}\frac{1}{2^i}\leq \frac{1}{2m^{2}}
\end{equation}
and
\begin{equation}\label{exq2}
\sum_{i=1}^{l}\frac{1}{2^i}\frac{1}{2m^2}\leq \frac{1}{2m^2}.
\end{equation}

Thus, we have for any $N\in\mathbb{N}$
$$
\operatorname{span}(K^{\mathbb{N}},N,d,\sigma, \frac{1}{2m^2})\leq (3m)^{N+l},
$$
which implies that
\begin{align*}
\overline{\mdim}^{M}(K^{\mathbb{N}},d,\sigma)\leq \liminf_{m\to\infty}\limsup_{N\to\infty}\frac{N+l}{N}\frac{\log 3m}{\log 2m^{2}}=\frac{1}{2}.
\end{align*}

Let $0<\varepsilon<1$, then there exists $m\in\mathbb{N}$ such that 
$$
\frac{1}{m(m+1)}\leq \varepsilon< \frac{1}{m(m-1)}.
$$
Let $A_{\varepsilon}=\Big\{1,\frac{1}{2},\cdots,\frac{1}{m}\Big\}$ and $h=\lfloor -\log\varepsilon\rfloor$, then
\begin{equation}\label{exq3}
\sum_{i=h+1}^{\infty}\frac{1}{2^i}\geq \varepsilon.
\end{equation}
It is clear that
 $$
 \operatorname{sep}(K^{\mathbb{N}},N,d,\sigma, \varepsilon)\geq m^{N+h}
 $$
and
\begin{align*}
\underline{\mdim}^{M}(K^{\mathbb{N}},d,\sigma)\geq \limsup_{m\to\infty}\limsup_{N\to\infty}\frac{N+h}{N}\frac{\log m}{\log (m(m-1)/2)}=\frac{1}{2}.
\end{align*}
Finally, we calculate the lower bound of mean intermediate dimension with respect to the admissible function $\Psi(\widetilde{\mathbf{u}})=\widetilde{\mathbf{u}}^{\frac{1}{\theta}}$, where $0<\theta\leq 1$.

We consider a suitable measure on $K^{\mathbb{N}}$. Let $\varepsilon>0$ and $s>0$, then there exists $m\in\mathbb{N}$ with 
$$
\frac{1}{m(m+1)}< 2\varepsilon\leq \frac{1}{m(m-1)}.
$$

For $N\in\mathbb{N}$, set 
$$
\mathscr{M}_{N}=\Bigg\{ \prod_{i=1}^{N}\left\{\frac{1}{M_i}\right\}\times \prod_{i\geq N+1}K \;\Big|\; 1\leq i\leq N,M_i\in\mathbb{N}  \Bigg\}
$$ 
and 
$$
\mathscr{M}_{N}^{m}=\Bigg\{ \prod_{i=1}^{N}\left\{\frac{1}{M_i}\right\}\times \prod_{i\geq N+1}K \;\Big|\; 1\leq i\leq N, 1\leq M_i\leq m \Bigg\}.
$$ Clearly, $\mathscr{M}_{N}^{m}$ is a subset of $\mathscr{M}_{N}$ and $\mathscr{M}_{N}$ is a partition of $K^{\mathbb{N}}$ for every $N\in\mathbb{N}$ and let the set $A\in\mathscr{M}_{N}$ carry the mass $Q_{N}^{\varepsilon}(A)$ with
\begin{align*}
Q_{N}^{\varepsilon}(A)=
\begin{cases}
\varepsilon^\frac{N(1+\theta)s}{4\theta}, \quad &A\in\mathscr{M}_{N}^{m},\\\\
0, \quad         &A\notin\mathscr{M}_{N}^{m}.
\end{cases}
\end{align*}

For a Borel set $U$, we define the measure $\mu_{\varepsilon,N}$ as follows
\begin{align*}
\mu_{\varepsilon,N}(U)=\sum_{U\cap A\neq\emptyset,A\in\mathscr{M}_{N}} Q_{N}^{\varepsilon}(A).
\end{align*}
We always assume that $\varepsilon$ is sufficiently small, so we can always assume that $m>2$ holds without loss of generality, then 
\begin{align*}
\mu_{\varepsilon,N}\Big(\supp(\mu_{\varepsilon,N})\Big)&=m^{N}\varepsilon^\frac{N(1+\theta)s}{4\theta}\\
&\geq \frac{m^{N}}{(2m^{2}+2m)^{\frac{N(1+\theta)s}{4\theta}}}\\
&\geq \frac{m^{N}}{(4m^{2})^{\frac{N(1+\theta)s}{4\theta}}}\\&
=\frac{m^{N}}{2^{\frac{N(1+\theta)s}{2\theta}}m^{\frac{N(1+\theta)s}{2\theta}}}\\&\geq \frac{m^{N}}{m^{\frac{N(1+\theta)s}{\theta}}},
\end{align*}
if $s=\frac{\theta}{1+\theta}$, then $\mu_{\varepsilon,N}\Big(\supp(\mu_{\varepsilon,N})\Big)\geq 1$.

If $\varepsilon^{\frac{1}{\theta}}< \diam_{d_N}(U)\leq \varepsilon$, then 
$$
\mu_{\varepsilon,N}(U)\leq  \varepsilon^\frac{N(1+\theta)s}{4\theta}\leq \diam_{d_N}(U)^\frac{N(1+\theta)s}{4}.
$$
It follows from Lemma \ref{l321} that
\begin{align*}
\underline{\mdim}^{\Psi}(K^{\mathbb{N}},d,\sigma)\geq \frac{(1+\theta)s}{4}=\frac{\theta}{4}.
\end{align*}

\end{example}

\bigskip\bigskip\bigskip
\noindent{\bf Funding:} This work is supported by NSFC (No.12271432) and by Analysis, Probability \& Fractals Laboratory (No. LR18ES17).\\

\end{document}